\newtheorem{thm}{Theorem}[section]
\newtheorem{cor}[thm]{Corollary}
\newtheorem{prop}[thm]{Proposition}
\newtheorem{lemma}[thm]{Lemma}
\newtheorem{claim}{Claim}
\newtheorem{thmintro}{Theorem}
\newtheorem{corintro}[thmintro]{Corollary}
\theoremstyle{definition}
\newtheorem{rem}[thm]{Remark}
\theoremstyle{remark}
\newcommand{\FF}{\mathbf{F}}
\newcommand{\NN}{\mathbf{N}}
\newcommand{\QQ}{\mathbf{Q}}
\newcommand{\ZZ}{\mathbf{Z}}
\newcommand{\GGG}{\mathcal{G}}
\newcommand{\AAA}{\mathcal{A}}
\newcommand{\inv}{^{-1}}
\newcommand{\la}{\langle}
\newcommand{\ra}{\rangle}
\def\co{\colon\thinspace}
\DeclareMathOperator{\E}{E}
\DeclareMathOperator{\Fix}{Fix}
\DeclareMathOperator{\Stab}{Stab}
\DeclareMathOperator{\dist}{d}
\DeclareMathOperator{\Pc}{Pc}
\DeclareMathOperator{\Aut}{Aut}
\DeclareMathOperator{\Ess}{Ess}
\DeclareMathOperator{\CAT}{CAT(0)}
\DeclareMathOperator{\Min}{Min}
\numberwithin{equation}{section}
\begin{document}

\renewcommand{\proofname}{{\bf Proof}}

\title[Open subgroups of Kac--Moody groups]{Open subgroups of locally compact Kac--Moody groups}
\author[P-E.~Caprace]{Pierre-Emmanuel \textsc{Caprace}$^*$}
\address{UCL, 1348 Louvain-la-Neuve, Belgium}
\email{pe.caprace@uclouvain.be}
\thanks{$^*$F.R.S.-FNRS Research Associate, supported in part by FNRS grant F.4520.11}

\author[T.~Marquis]{Timoth\'ee \textsc{Marquis}$^{\ddagger}$}
\address{UCL, 1348 Louvain-la-Neuve, Belgium}
\email{timothee.marquis@uclouvain.be}
\thanks{$^\ddagger$F.R.S.-FNRS Research Fellow}

\date{August 2011}
%
\begin{abstract}
Let $G$ be a complete Kac-Moody group over a finite field. It is known that $G$ possesses a BN-pair structure, all of whose parabolic subgroups are open in $G$. We show that, conversely, every open subgroup of $G$ is contained with finite index in some parabolic subgroup; moreover there are only finitely many such parabolic subgroups. The proof uses some new results on parabolic closures in Coxeter groups. In particular, we give conditions ensuring that  the parabolic closure of the product of two elements in a Coxeter group contains the respective  parabolic closures of those elements.
\end{abstract}

\maketitle

\section{Introduction}

This paper is devoted to the study of open subgroups of complete Kac--Moody groups over finite fields. The interest in the structure of those groups is motivated by the fact that they constitute a prominent family of locally compact groups which are simultaneously \emph{topologically simple} and \emph{non-linear over any field} (see \cite{RemyGAFA})  and \cite{CaRe}). They show some resemblance with the simple linear locally compact groups arising from semi-simple algebraic groups over local fields of positive characteristic.

The first question on open subgroups of a given locally compact group $G$ one might ask is: How many such subgroups are there? Let us introduce some terminology providing possible answers to this question. We say that $G$  \textbf{has few open subgroups}  if every proper open subgroup of $G$ is compact. We say that $G$ is \textbf{Noetherian} if $G$ satisfies an ascending chain condition on open subgroups. Equivalently $G$ is Noetherian if and only if every open subgroup of $G$ is compactly generated (see Lemma~\ref{lem:Noeth} below). Clearly, if $G$ has few open subgroups, then it is Noetherian. Basic examples of locally compact groups that are Noetherian --- and in fact, even have few open subgroups --- are connected groups and compact groups. Noetherianity can thus be viewed as a finiteness condition which generalizes simultaneously the notion of connectedness and of compactness. It is highlighted in \cite{CaMo}, where it is notably shown that a Noetherian group admits a subnormal series with every subquotient  compact, or abelian, or simple. An example of a non-Noetherian group is given by the additive group $\QQ_p$ of the $p$-adics. Other examples, including   simple ones, can be constructed as groups acting on trees. 

According to a theorem of G.~Prasad~\cite{Prasad} (which he attributes to Tits), simple locally compact groups arising from algebraic groups over local fields have few open subgroups. Locally compact Kac--Moody groups are however known to have a broader variety of open subgroups in general. Indeed, Kac--Moody groups are  equipped with a $BN$-pair all of whose parabolic subgroups are open. In particular, if the Dynkin diagram of a Kac--Moody group admits proper subdiagrams that are not of spherical type, then the corresponding Kac--Moody groups have proper open subgroups that are not compact.

Our main result is   that parabolic subgroups  in Kac--Moody groups are  essentially the only source of open subgroups. 

\begin{thmintro}\label{main thm intro}
Every open subgroup of a complete Kac-Moody group $G$ over a finite field has finite index in some parabolic subgroup. 

Moreover, given an open subgroup $O$, there are only finitely many distinct parabolic subgroups of $G$ containing $O$ as a finite index subgroup.
\end{thmintro}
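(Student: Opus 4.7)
The plan is to study $O$ through its action on the positive building $\Delta$ of $G$. Let $c_0\in\Delta$ be the fundamental chamber, so that $B=\Stab_G(c_0)$, and let $\delta\co\Delta\times\Delta\to W$ denote the Weyl distance, so that $g\in B\delta(c_0,gc_0)B$ for every $g\in G$. Since $B$ is compact and $O\cap B$ is open in $B$, the subgroup $O\cap B$ has finite index in $B$, so the orbit $Oc_0\cong O/(O\cap B)$ projects with finite fibres onto the set
\[
W(O):=\{\delta(c_0,gc_0):g\in O\}\subseteq W.
\]
The two main steps I would try to establish are (i) that there exists $J\subseteq S$ with $W(O)\subseteq W_J$, so that $Oc_0$ lies in the $J$-residue $R_J$ of $c_0$ and hence $O\subseteq\Stab_G(R_J)=P_J$; and (ii) that $W(O)$ has finite complement in $W_J$, which upgrades (i) to $[P_J:O]<\infty$ via the identity $[P_J:O]=[B:O\cap B]\cdot|O\backslash R_J|$.

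Step (i) is where the parabolic-closure results announced in the abstract enter. For $g_1,g_2\in O$ with Bruhat types $w_1,w_2\in W(O)$, the product $g_1g_2\in O$ has a Bruhat type $w\in W(O)$, which typically differs from $w_1w_2$; however the advertised theorem should guarantee, under suitable hypotheses, that the parabolic closure $\Pc_W(w)$ in $W$ contains both $\Pc_W(w_1)$ and $\Pc_W(w_2)$. This will make the family $\{\Pc_W(w(g)):g\in O\}$ directed under inclusion, and reduce the task to showing that $\bigcup_{g\in O}\Pc_W(w(g))$ is contained in a single standard parabolic $W_J$. The key input here is the openness of $O$: it forces $O$ to contain a rich supply of ``short'' elements (fixators of deeper balls in $\Delta$) which should simultaneously validate the hypotheses of the parabolic-closure theorem on enough pairs, and bound the rank of the directed union above.

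Step (ii) should then follow by applying the same parabolic-closure machinery inside the induced $BN$-pair $(B,N\cap P_J)$ of $P_J$, to saturate $W(O)$ up to a finite subset of $W_J$. For the final uniqueness claim, the intersection of two parabolic subgroups of $G$ is again parabolic, so the finite-index parabolic overgroups of $O$ form a meet-closed family with a smallest element $P_{\min}$; any such overgroup contains $P_{\min}$ and is commensurable with it, and hence lies in the commensurator of $P_{\min}$ in $G$. Standard $BN$-pair combinatorics then limits the parabolic subgroups between $P_{\min}$ and this commensurator to a finite collection indexed by subsets of the finite set $S$.

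The hardest part, I anticipate, is (i). Even granted the parabolic-closure inclusion for products, controlling the rank of the directed union $\bigcup_{g\in O}\Pc_W(w(g))$ is not formal: one needs a delicate interplay between the topology on $G$ (to produce the pairs $(g_1,g_2)$ satisfying the hypotheses of the parabolic-closure theorem) and the combinatorics of Coxeter groups (to conclude that the supremum of these closures is itself a \emph{finite-rank} parabolic subgroup of $W$).
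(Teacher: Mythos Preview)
Your plan has a genuine gap at its core. The set $W(O)=\{\delta(c_0,gc_0):g\in O\}$ is only a \emph{subset} of $W$, not a subgroup, and the Bruhat type of a product $g_1g_2$ bears no simple relation to the product $w_1w_2$ of the types of $g_1$ and $g_2$. The parabolic-closure theorem you invoke (Theorem~\ref{thm:Cox2}) concerns honest products $g^mh^n$ inside $W$; it says nothing about $\Pc(w)$ when $w$ is the Bruhat type of an element of $BwB\cdot Bw'B$. So the mechanism by which you hope to make $\{\Pc(w(g)):g\in O\}$ directed simply does not exist. The paper circumvents this by passing to \emph{apartment stabilizers}: for each apartment $A\ni C_0$ one forms $\overline N_A=\Stab_{O_1}(A)/\Fix_{O_1}(A)$, which \emph{is} a genuine subgroup of $W$, and Corollary~\ref{cor:fundamental} then applies to produce a finite-index parabolic inside $\overline N_A$.

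Your step~(ii) is also defective: even if $W(O)=W_J$ (so $O$ meets every double coset $BwB$ with $w\in W_J$), this is far from $[P_J:O]<\infty$; it only says $O$ hits every $B$-orbit on $R_J$, not that $O$ has finitely many orbits there. The paper instead shows that $O_1$ \emph{contains} the explicit subgroup $L_J^+\cdot U_{J\cup J^\perp}$, and then finite index follows from the Levi decomposition (Lemma~\ref{lemme conclusion 2}). The crucial bridge you are missing is the (FPRS) property (Lemma~\ref{lemme BR-PEC}): openness of $O$ means it contains $\Fix_G(B(C_0,r))$ for some $r$, and (FPRS) says that root groups $U_{-\alpha}$ with $\alpha$ far from $C_0$ lie in this fixator. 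Combined with the Coxeter analysis of $\overline N_A$ (which produces $w$-essential roots along which one can push $\alpha$ far away by conjugation inside $O_1$), this is what forces $U_{\pm\alpha}\subset O_1$ for all $\alpha\in\Phi_J$, hence $L_J^+\subset O_1$. Without (FPRS) there is no way to convert combinatorial information in $W$ into containment of root subgroups. Finally, your uniqueness argument assumes that intersections of parabolic subgroups of $G$ are parabolic, which is not available here; the paper argues instead via Lemma~\ref{lemme conclusion 3}, showing directly that any finite-index overgroup of $O_1$ lies in $P_{J\cup J'}$ for some spherical $J'\subseteq J^\perp$.
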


A more precise statement of this theorem will be given later, see Theorem~\ref{thm complet}.
As a consequence, we deduce the following.

\begin{corintro}\label{cor intro}
Complete Kac--Moody groups over finite fields are Noetherian.
\end{corintro}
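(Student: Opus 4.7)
The plan is to deduce Corollary~\ref{cor intro} directly from Theorem~\ref{main thm intro}, combined with Lemma~\ref{lem:Noeth}, which characterizes Noetherianity of a locally compact group as the compact generation of each of its open subgroups. Thus it suffices to exhibit, for an arbitrary open subgroup $O$ of $G$, a compact generating set.

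The first step is to verify that every parabolic subgroup of $G$ is compactly generated. The standard Borel subgroup $B$ is a compact open subgroup of $G$, being profinite over the finite ground field, and every standard parabolic $P_J$ is generated by $B$ together with one lift of each of the finitely many simple reflections indexed by elements of $J$. This exhibits $P_J$ as compactly generated, and conjugate parabolics inherit the property.

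The second step is to apply Theorem~\ref{main thm intro}: the open subgroup $O$ has finite index in some parabolic subgroup $P$ of $G$. I would then invoke the standard fact that a closed finite-index subgroup of a compactly generated locally compact group is itself compactly generated; a Schreier-type argument suffices, combining a compact symmetric generating set of $P$ with a finite transversal of $O$ in $P$ to produce a compact generating set for $O$. Lemma~\ref{lem:Noeth} then completes the deduction.

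I do not foresee any serious obstacle at this stage: all the genuine difficulty is absorbed by Theorem~\ref{main thm intro}, and once that theorem is granted, the corollary is essentially formal. The only minor point worth being explicit about is the compactness of $B$ in the particular completion of the Kac--Moody group under consideration, which is a standard feature of complete Kac--Moody groups over finite fields and is presumably recorded in the preliminaries of the paper.
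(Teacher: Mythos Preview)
Your proposal is correct and follows essentially the same route as the paper: reduce to compact generation of open subgroups via Lemma~\ref{lem:Noeth}, show parabolics are compactly generated, invoke Theorem~\ref{main thm intro} for the finite-index inclusion, and then pass to the finite-index subgroup. The only cosmetic difference is that the paper justifies compact generation of parabolics (and of their finite-index subgroups) via the \v{S}varc--Milnor lemma applied to the proper cocompact action on the corresponding residue, whereas you argue more directly that $P_J=\langle B,\text{lifts of }s\in J\rangle$ and then use a Schreier-type argument; both are entirely standard and interchangeable here.
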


In fact, Theorem~\ref{main thm intro} allows us to characterize those locally compact Kac--Moody groups having few open subgroups, as follows. 

\begin{corintro}\label{corintro2}
Let $G$ be a complete Kac--Moody group of irreducible type over a finite field. Then $G$ has few open subgroups if and only if the Weyl group of $G$ is of affine type, or of compact hyperbolic type.
\end{corintro}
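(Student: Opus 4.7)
The plan is to combine Theorem~\ref{main thm intro} with the classical characterization of irreducible Coxeter systems $(W,S)$ whose proper standard parabolic subgroups are all finite. The key preliminary observation is that since $G$ carries a BN-pair with compact Borel $B$ (the latter being pro-finite because the ground field is finite), a standard parabolic $P_J=\bigsqcup_{w\in W_J}BwB$ decomposes as a disjoint union of compact, open $B$-double cosets; hence $P_J$ is compact if and only if the Weyl subgroup $W_J$ is finite.

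First I would translate ``few open subgroups'' into the condition ``$W_J$ is finite for every $J\subsetneq S$''. By Theorem~\ref{main thm intro}, any proper open subgroup $H\leq G$ has finite index in some $P_J$. The case $J=S$ can be ruled out: the normal core of such an $H$ would be an open normal subgroup of finite index in $G$, which, by topological simplicity of $G$ (recalled in the introduction, from \cite{RemyGAFA,CaRe}), must equal $G$, forcing $H=G$ and contradicting the properness of $H$. Hence $H$ has finite index in some $P_J$ with $J\subsetneq S$, so $H$ is compact if and only if $P_J$ is. Conversely, each such $P_J$ is itself a proper open subgroup of $G$. Combining both directions, $G$ has few open subgroups precisely when $W_J$ is finite for every proper $J\subsetneq S$.

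It then remains to invoke the Coxeter-group classification: an irreducible infinite Coxeter system $(W,S)$ has all proper standard parabolic subgroups finite if and only if it is of affine type or of compact hyperbolic (Lanner) type — the latter being essentially the defining property of compact hyperbolic Coxeter systems, and the former an immediate consequence of inspecting the extended Dynkin diagrams. The degenerate spherical case, in which $W$ itself is finite and $G$ is a finite Chevalley-type group (hence trivially has few open subgroups), is excluded from the statement by the usual convention that ``Kac--Moody group'' refers here to the infinite-dimensional setting. The only genuinely non-obvious step is handling the improper parabolic $P_S=G$ via topological simplicity; once this is granted, the corollary is a clean structural consequence of Theorem~\ref{main thm intro} paired with the standard Coxeter-theoretic classification.
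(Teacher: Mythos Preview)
Your argument is correct and matches the paper's one-line proof, which simply invokes Theorem~\ref{main thm intro} together with the characterization of affine and compact hyperbolic Coxeter groups as exactly those irreducible infinite Coxeter groups whose proper parabolic subgroups are all finite. You are more explicit than the paper in ruling out the improper parabolic $P_S=G$ via topological simplicity (a point the paper's terse proof glosses over), but the overall strategy is the same.
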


Notice that the list of all compact hyperbolic types of Weyl groups is finite  and contains diagrams of rank at most~$5$ (see \emph{e.g.}  Exercise V.4.15 on p.~133 in \cite{Bourbaki}). The groups in Corollary~\ref{corintro2} include in particular all complete Kac--Moody groups of rank two.

Another application of Theorem~\ref{main thm intro} is that it shows how the $BN$-pair structure is encoded in the topological group structure of a Kac--Moody group. Here is a precise formulation of this.

\begin{corintro}
Let $G$ be a complete Kac--Moody group over a finite field and $P<G$ be an open subgroup. 
If $P$ is maximal in its commensurability class, then $P$ is a parabolic subgroup of $G$. 
\end{corintro}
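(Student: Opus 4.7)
The approach is to reduce the corollary to Theorem~\ref{main thm intro} in essentially one step. Given an open subgroup $P$ of $G$, Theorem~\ref{main thm intro} supplies a parabolic subgroup $Q$ of $G$ containing $P$ with finite index. Since $P \subseteq Q$ and $[Q:P] < \infty$, the subgroups $P$ and $Q$ are commensurable, so $Q$ is a member of the commensurability class of $P$ that contains $P$.

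By hypothesis $P$ is maximal (with respect to inclusion) in its commensurability class, so the inclusion $P \subseteq Q$ must be an equality. Therefore $P = Q$, and $P$ itself is a parabolic subgroup of $G$.

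The only thing to unpack is the notion of ``maximal in its commensurability class.'' Under the natural reading, namely that no overgroup of $P$ (in $G$) commensurable with $P$ properly contains $P$, the argument above works verbatim. Even if one restricts comparisons to \emph{open} overgroups, the same reasoning applies because every parabolic subgroup of a complete Kac--Moody group is open. I do not expect any real obstacle: all the substantive content is already packaged inside Theorem~\ref{main thm intro}, which produces a canonical parabolic envelope of finite index for each open subgroup of $G$, and the corollary is just the observation that maximality in the commensurability class picks out precisely the parabolic envelope itself.
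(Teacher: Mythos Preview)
Your argument is correct and is precisely the immediate deduction from Theorem~\ref{main thm intro} that the paper has in mind; in fact the paper does not even spell out a separate proof for this corollary, treating it as an obvious consequence. Your one-line reduction (the parabolic $Q \supseteq P$ with $[Q:P]<\infty$ is commensurable with $P$, hence equals $P$ by maximality) is exactly the intended reasoning.
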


Our proof of Theorem~\ref{main thm intro} relies on some new results on parabolic closures in Coxeter groups, which we now proceed to describe. Let thus $(W, S)$ be a Coxeter system with $S$ finite. Recall that any intersection of parabolic subgroups in $W$ is itself a parabolic subgroup. Following D.~Krammer~\cite{MR2466021}, it thus makes sense to define the \textbf{parabolic closure} of a subset of  $W$ as the intersection of all parabolic subgroups containing it. The parabolic closure of a set $E \subseteq W$ is denoted by $\Pc(E)$. 

\begin{thmintro}\label{thm:Cox1}
Let $w \in W$ be an element of infinite order and let $\lambda$ be a translation axis for $w$ in  the Davis complex. Assume that the parabolic closure $\Pc(w)$ is of irreducible type. 

Then there is a constant $C$ such that for any two parallel walls $m, m'$ transverse to $\lambda$, if $d(m, m') > C$, then $\Pc(w) =  \Pc(r_{m}, r_{m'}) $.

\end{thmintro}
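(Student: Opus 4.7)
The inclusion $\Pc(r_m,r_{m'})\subseteq \Pc(w)$ is standard: since both $m$ and $m'$ are transverse to $\lambda$, classical results on parabolic closures of infinite-order elements (Krammer) place $r_m$ and $r_{m'}$ inside $\Pc(w)$, which yields the inclusion. After replacing $W$ by $\Pc(w)$, which is irreducible by hypothesis, I may assume $W=\Pc(w)$. The theorem then reduces to exhibiting a constant $C$ such that for every pair of parallel walls $m,m'$ transverse to $\lambda$ with $d(m,m')>C$, the parabolic subgroup $Q:=\Pc(r_m,r_{m'})$ equals $W$.

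The candidate constant is $C=\ell_w$, the translation length of $w$ on $\lambda$, suggested by the following geometric argument. Let $R_Q$ denote the union of chambers of the Davis complex in the $Q$-orbit of the fundamental chamber: $R_Q$ is a convex subcomplex, and any wall of $W$ meeting the interior of $R_Q$ transversally has its reflection in $Q$ (such a wall separates two chambers of the $Q$-orbit, hence gives rise to an element of $Q$ by simple transitivity on chambers). Let $\sigma$ be the segment of $\lambda$ between $m\cap\lambda$ and $m'\cap\lambda$, of length $d(m,m')>\ell_w$. If one grants the inclusion $\sigma\subseteq R_Q$, then $\sigma$ contains in its interior a fundamental domain $I$ for the $\la w\ra$-action on $\lambda$; a minimal gallery along $\lambda$ traversing $I$ runs from some chamber $C_0$ to $wC_0$, crossing walls $n_1,\dots,n_s$ of $W$ all lying in the interior of $R_Q$, and the standard gallery identity $w=r_{n_s}r_{n_{s-1}}\cdots r_{n_1}$ displays $w$ as a product of elements of $Q$. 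Thus $w\in Q$, whence $W=\Pc(w)\subseteq Q$ and $Q=W$.

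The heart of the matter is the inclusion $\sigma\subseteq R_Q$. By convexity of $R_Q$ it suffices to check that the endpoints $m\cap\lambda$ and $m'\cap\lambda$ lie in $R_Q$, and this is not automatic: although $r_m,r_{m'}\in Q$, so that $m$ and $m'$ are ``walls of $Q$'', they extend as walls of $W$ beyond $R_Q$, and in principle the crossings of $\lambda$ with $m$ or $m'$ could fall outside $R_Q$. My plan for bypassing this is to invoke the paper's new results on parabolic closures of products of two elements, as announced in the abstract. The translation $r_mr_{m'}$ has an axis $\mu$ lying inside $R_Q$, with translation length $2d(m,m')$, so its essential dynamics are controlled in $R_Q$. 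Applying the product-closure results to a suitable decomposition of $w$ involving $r_mr_{m'}$, for instance $w=(r_mr_{m'})\cdot(r_{m'}r_mw)$, and comparing the essential dynamics of $w$ on $\lambda$ with those of $r_mr_{m'}$ on $\mu$, should yield $\Pc(w)\subseteq \Pc(r_m,r_{m'})$ as soon as $d(m,m')$ exceeds a constant depending only on $W$ and $w$. Pinning down this constant and verifying the ``alignment'' hypothesis of those product-closure results for our configuration is the principal technical difficulty.
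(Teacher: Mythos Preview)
Your fallback plan is circular: the product-closure theorem you want to invoke (the paper's Theorem~\ref{thm thm Coxeter}) is proved \emph{using} the present statement---see Claims~\ref{claim 4} and~\ref{claim 5} in that proof, both of which appeal to Corollary~\ref{cor:Pc:2refl}. Even setting aside the circularity, that theorem points the wrong way: it asserts $\Pc(g^ah^b)\supseteq\Pc(g)\cup\Pc(h)$ for suitably large and unbalanced exponents $a,b$, whereas you need information about $\Pc(gh)=\Pc(w)$ with exponents $a=b=1$, and you need an \emph{upper} bound $\Pc(w)\subseteq Q$, not a lower one.

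The gap you correctly isolate---getting $\sigma\subseteq R_Q$---is genuine, and the candidate $C=\ell_w$ does not close it. There is no reason for the axis $\lambda$ to meet the residue $R_Q$ at all: $Q=\Pc(r_m,r_{m'})$ may be a proper parabolic whose residue sits far from $\lambda$, with $m$ and $m'$ extending as walls of the ambient complex to cross $\lambda$ well outside $R_Q$. (Think already of $\tilde A_2$: $Q$ could be a rank-one parabolic, $R_Q$ a pair of adjacent chambers, while $m$ is an entire line meeting $\lambda$ arbitrarily far away.) So the endpoints $m\cap\lambda$, $m'\cap\lambda$ need not lie in $R_Q$, and the gallery argument cannot start.

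The paper's route is quite different and avoids trying to place $w$ inside $Q$ geometrically. Pick $k>0$ with $w^k$ in a torsion-free finite-index normal subgroup $W_0$; then $\Ess(w)$ decomposes into finitely many $\langle w^k\rangle$-orbits $M_1,\dots,M_t$, each consisting of pairwise parallel walls (Lemma~\ref{lemme DJ}). Proposition~\ref{prop:Orbits} shows that the parabolic closure of the reflections in any single $M_i$ is an irreducible component of $\Pc(w)$, hence equals $\Pc(w)$ by the irreducibility hypothesis. The engine is then the nested-roots lemma (Lemma~\ref{lemme 3 paralleles}): if $\alpha\subsetneq\beta\subsetneq\gamma$ then $r_\beta\in\Pc(r_\alpha,r_\gamma)$. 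For fixed $m\in M_i$ this yields an ascending chain $Q_n=\Pc(r_{w^{-kn}m},\,r_{w^{kn}m})$ of parabolic subgroups of $\Pc(w)$; since $|S|<\infty$ the chain stabilises, and its union visibly contains every reflection from $M_i$, so $Q_n=\Pc(w)$ for some $n$. Maximising over the finitely many orbits gives the constant $C$.
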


In particular, we get the following.

\begin{corintro}
Any irreducible non-spherical parabolic subgroup of a Coxeter group is the parabolic closure of a pair of reflections. 
\end{corintro}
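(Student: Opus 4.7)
The plan is to reduce the corollary to a direct application of Theorem~\ref{thm:Cox1}. Let $W_T$ be an irreducible non-spherical parabolic subgroup of $W$. The strategy is in two steps: first, exhibit an element $w \in W_T$ of infinite order whose parabolic closure in $W$ equals $W_T$; second, apply Theorem~\ref{thm:Cox1} to $w$ to extract the desired pair of reflections.

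For the first step, a natural candidate is a Coxeter element $w = s_1 s_2 \cdots s_n$ of $W_T$, where $T = \{s_1, \ldots, s_n\}$. Since $W_T$ is irreducible and non-spherical, it is infinite and $w$ has infinite order. By a standard property of Coxeter elements of irreducible infinite Coxeter groups, the parabolic closure of $w$ inside $W_T$ is all of $W_T$. Since every parabolic subgroup of $W$ contained in $W_T$ is automatically a parabolic subgroup of $W_T$, this also yields $\Pc_W(w) = W_T$. I expect this to be the most delicate step, as it relies on a non-trivial structural fact about Coxeter elements of irreducible infinite Coxeter groups (which I would invoke from the Coxeter-theoretic literature, e.g.~Krammer's work).

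For the second step, since $\Pc(w) = W_T$ is irreducible by assumption, Theorem~\ref{thm:Cox1} provides a constant $C$ and a translation axis $\lambda$ of $w$ in the Davis complex with the property that $\Pc(w) = \Pc(r_m, r_{m'})$ for any two parallel walls $m, m'$ transverse to $\lambda$ with $d(m, m') > C$. To produce such walls, pick any wall $m$ transverse to $\lambda$; such walls exist because $\lambda$ crosses infinitely many walls. Because $w$ preserves $\lambda$ setwise and acts on it as a translation, the translates $w^k m$ are all transverse to $\lambda$ as well, and $d(m, w^k m)$ grows linearly in $|k|$. Moreover, for $|k|$ sufficiently large, $m$ and $w^k m$ must be parallel: if they intersected, the reflections $r_m$ and $r_{w^k m}$ would generate a finite dihedral subgroup, and by a standard discreteness argument in the Davis complex this can occur for only finitely many values of $k$. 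Choosing $k$ so that both parallelism and $d(m, w^k m) > C$ hold, Theorem~\ref{thm:Cox1} gives $W_T = \Pc(w) = \Pc(r_m, r_{w^k m})$, which is exactly the conclusion sought.
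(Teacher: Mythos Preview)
Your overall strategy matches the paper's: exhibit $w$ with $\Pc(w)$ equal to the given parabolic subgroup, then invoke Theorem~\ref{thm:Cox1}. Step~1 is fine; using a Coxeter element is a legitimate choice (the paper instead cites \cite[Cor.~4.3]{MR2585575} for the existence of such a $w$). But Step~2 has a genuine gap.

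Your claim that $m$ and $w^k m$ are parallel for all sufficiently large $|k|$ is false, and no ``standard discreteness argument'' can rescue it. The paper itself supplies a counterexample just before Proposition~\ref{prop:affVSnonaff}: in $\tilde A_2$ there is an element $w$ of infinite order and a $w$-essential wall $m$ such that $m$ and $w^n m$ intersect for \emph{every} odd $n$. In fact the same phenomenon occurs for your own choice of $w$, the Coxeter element $c = s_0 s_1 s_2$ of $\tilde A_2$: since $c$ cyclically permutes the three wall-directions in the Euclidean plane, $m$ and $c^k m$ intersect whenever $k \not\equiv 0 \pmod 3$. Consequently your assertion that $d(m, w^k m)$ grows linearly in $|k|$ also fails for those $k$ (the distance is zero).

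The fix is easy and is exactly what the paper does (see Lemma~\ref{lemme DJ} and the proof of Corollary~\ref{cor:Pc:2refl}): pick $k > 0$ with $w^k$ in the torsion-free finite-index normal subgroup $W_0$; then for any $w$-essential wall $m$ the walls $\{w^{kn} m : n \in \ZZ\}$ are pairwise parallel, nested along $\lambda$, and hence at unbounded mutual distance. Any sufficiently separated pair among them does the job.
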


Our main result on Coxeter groups concerns the parabolic closure of the product of two elements.

\begin{thmintro}\label{thm:Cox2}
There is a finite index normal subgroup $W_0 < W$ enjoying the following property.

For all $g,h\in W_0$, there exists a constant $K=K(g,h)\in\NN$ such that for all $m,n\in\ZZ$ with $\min\{|m|,|n|,|m/n|+|n/m|\}\geq K$, we have $\Pc(g^m h^n) \supseteq \Pc(g) \cup \Pc(h)$.
\end{thmintro}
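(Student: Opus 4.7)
The plan is to first fix a finite-index normal subgroup $W_0 \triangleleft W$ such that every nontrivial $g \in W_0$ has irreducible non-spherical parabolic closure $\Pc(g)$. Such a $W_0$ exists: Coxeter groups are virtually torsion-free via the reflection representation into $\mathrm{GL}_n(\ZZ)$, and by intersecting with a further finite-index subgroup one can eliminate the spherical direct factor that may appear in the parabolic closure of a torsion-free element (there being only finitely many conjugacy classes of spherical parabolic subgroups to account for). This ensures that Theorem~\ref{thm:Cox1} applies to every nontrivial $g \in W_0$, producing a constant $C_g$.

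Given $g, h \in W_0$, we may assume both are nontrivial: otherwise the conclusion reduces to the stability $\Pc(g^m) = \Pc(g)$ for $|m| \ge K$, itself a consequence of Theorem~\ref{thm:Cox1}. Let $\lambda_g, \lambda_h \subseteq X$ be translation axes in the Davis complex, with translation lengths $\tau(g), \tau(h)$, and set $w := g^m h^n$. The geometric heart of the proof is a $\CAT$ \emph{staircase lemma}: for $|m|, |n| \ge K$ and $|m/n|+|n/m| \ge K$ with $K = K(g,h)$ chosen large enough, $w$ is a hyperbolic isometry of $X$, and a quasi-axis of $w$ admits a fundamental period of the form $\alpha_g \ast \beta \ast \alpha_h$, where $\alpha_g$ tracks a subsegment of $\lambda_g$ of length comparable to $|m|\tau(g)$ at bounded Hausdorff distance, $\alpha_h$ tracks a subsegment of a translate $u\lambda_h$ of length comparable to $|n|\tau(h)$, and $\beta$ is a bridge of length bounded independently of $m, n$. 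The ratio hypothesis is needed precisely to prevent $\alpha_g$ and $\alpha_h$ from nearly reversing each other at the bridge, which could otherwise collapse $\tau(w)$ and destroy the staircase.

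The conclusion then follows quickly from Theorem~\ref{thm:Cox1}. The true axis of $w$ stays at bounded distance from the quasi-axis (by a standard $\CAT$ comparison argument, exploiting that the break angles at $\beta$ are bounded away from $\pi$), so along $\alpha_g$ it crosses arbitrarily many walls transverse to $\lambda_g$. For $|m|$ large enough, two parallel walls $M, M'$ among these can be selected with $\dist(M, M') > C_g$; Theorem~\ref{thm:Cox1} applied to $g$ then gives $\la r_M, r_{M'} \ra = \Pc(g)$. The reflections in walls crossed by an axis of $w$ lie in $\Pc(w)$, by the wall-set characterization of parabolic closures in the Davis complex, so $\Pc(g) \subseteq \Pc(w)$. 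The symmetric argument applied to $\alpha_h$ yields $\Pc(h) \subseteq \Pc(w)$, and hence $\Pc(w) \supseteq \Pc(g) \cup \Pc(h)$.

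I expect the main obstacle to be the staircase lemma itself, and specifically the $\CAT$ estimates required to (i) uniformly bound the bridge length $|\beta|$ in terms of $g, h$ alone, and (ii) extract the sharp dependence of the threshold $K$ on $g$ and $h$. The ratio condition $|m/n|+|n/m| \ge K$ is forced at this step: it makes one of the two long arcs dominate the other, which in turn keeps the break angle between $\alpha_g$ and $\alpha_h$ bounded away from $\pi$ and preserves the quasi-geodesic property of the concatenation $\alpha_g \ast \beta \ast \alpha_h$. The remaining ingredients, namely the selection of the two reflections and the verification that they lie in $\Pc(w)$, should then follow routinely from Theorem~\ref{thm:Cox1} together with standard facts on parabolic closures.
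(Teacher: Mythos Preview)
There are two genuine gaps in your plan.

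\medskip
\textbf{Irreducibility cannot be forced.} Your first step is to choose $W_0$ so that every nontrivial $g \in W_0$ has \emph{irreducible} parabolic closure, which is what Theorem~\ref{thm:Cox1} requires as an input. No such $W_0$ exists in general. If $W = W_1 \times W_2$ with both factors infinite, then every finite-index subgroup contains elements $(g_1, g_2)$ with both coordinates of infinite order, and $\Pc(g_1, g_2) = \Pc(g_1) \times \Pc(g_2)$ is reducible. Passing to a torsion-free $W_0$ (as in the paper) kills the spherical components of $\Pc(g)$, but it cannot kill reducibility. The paper confronts this directly: it decomposes $\Pc(g)$ into its irreducible components $P_i$ (via Proposition~\ref{prop:Orbits}), writes $g = \prod_i g_i$ accordingly, and then proves $P_i \subseteq \Pc(g^m h^n)$ component by component. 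This decomposition is not cosmetic; the argument for a given $P_i$ depends on how $P_i$ interacts with the components $Q_j$ of $\Pc(h)$ (commuting, equal, affine, or separated by a grid of walls), and the five Claims in the proof of Theorem~\ref{thm thm Coxeter} treat these cases separately.

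\medskip
\textbf{The staircase lemma fails over flats.} Your core geometric input is that the axis of $w = g^m h^n$ fellow-travels a concatenation $\alpha_g * \beta * \alpha_h$, by a ``standard $\CAT$ comparison argument''. This is a Morse-type statement, and it is precisely what fails in $\CAT$ spaces containing flats. If $g$ and $h$ act as translations in a common Euclidean flat, then $w$ is the translation by $m\vec g + n\vec h$: its axis is a single straight line that need not come near either $\lambda_g$ or $\lambda_h$, and for suitable $m, n$ with $|m|, |n|$ both large we can even have $m\vec g + n\vec h = 0$, so that $w$ is elliptic and has no axis at all. Your interpretation of the ratio condition (keeping a break angle away from $\pi$) is not what is going on; the paper uses $|m/n| + |n/m| \geq K$ in exactly this affine situation (Claim~\ref{claim 3}) to rule out the cancellation $m\vec g + n\vec h = 0$ by a pigeonhole on translation lengths, and separately (Claim~\ref{claim 4}) when a common $\langle g\rangle$- and $\langle h\rangle$-orbit of parallel walls is present. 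In the remaining, genuinely non-flat configurations, the paper does not use any axis-tracking argument either: it uses the Grid Lemma (Lemma~\ref{lemme grid lemma}) to control how the $g$-essential and $h$-essential walls can intersect, and then exhibits explicit nested chains of roots $\alpha$ with $g^m h^n \alpha \subsetneq \alpha$ by direct manipulation of root inclusions (Claim~\ref{claim 5}).

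\medskip
In short, your outline is the natural one in a Gromov-hyperbolic setting, but the Davis complex of a general Coxeter group has flats coming from affine and reducible parabolic subgroups, and both of your main steps break over those flats. The paper's proof is organised precisely around isolating and handling these flat phenomena.
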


The following corollary is an essential ingredient in the proof of Theorem~\ref{main thm intro}.

\begin{corintro}\label{cor:fundamental}
Let $H$ be a subgroup of $W$. Then there exists $h\in H$ such that the parabolic closure of $h$ has finite index in the parabolic closure of $H$.
\end{corintro}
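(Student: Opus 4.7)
Write $P := \Pc(H)$ and decompose $P = P^{\mathrm{ns}} \times P^{\mathrm{sph}}$ into the product of its non-spherical and spherical irreducible components. Since $P^{\mathrm{sph}}$ is finite, a parabolic subgroup of $P$ has finite index in $P$ if and only if it contains $P^{\mathrm{ns}}$, so the task reduces to finding $h \in H$ with $\Pc(h) \supseteq P^{\mathrm{ns}}$. When $P^{\mathrm{ns}} = \{1\}$, the group $H$ is finite and $h = 1$ suffices; we therefore assume $P^{\mathrm{ns}} \neq \{1\}$, which forces $H$ to be infinite.

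Let $W_0 \trianglelefteq W$ be the finite-index normal subgroup provided by Theorem~\ref{thm:Cox2}; since Coxeter groups are virtually torsion-free and shrinking $W_0$ preserves the conclusion of that theorem, we may assume $W_0$ is torsion-free, and we set $d := [W : W_0]$. Fix $h_0 \in H$ of infinite order (available since $H$ is infinite inside a virtually torsion-free group) and replace it by $h_0^d \in H \cap W_0$, which is still of infinite order with $\Pc(h_0^d) = \Pc(h_0)$ by Krammer's theorem. We inductively construct a sequence $h_0, h_1, \dots \in H \cap W_0$ of infinite-order elements whose parabolic closures $\Pc(h_0) \subsetneq \Pc(h_1) \subsetneq \dots$ form a strictly ascending chain of parabolic subgroups of $P$. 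Because a rank-$r$ parabolic of a rank-$r$ Coxeter group equals the whole group, ranks strictly increase along the chain, which therefore terminates after at most $\rank(P) \leq |S|$ steps at some $h := h_N$ necessarily satisfying $\Pc(h) \supseteq P^{\mathrm{ns}}$.

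\textbf{Inductive step.} Suppose $\Pc(h_i) \not\supseteq P^{\mathrm{ns}}$, so that some irreducible non-spherical factor $P_j$ of $P^{\mathrm{ns}}$ satisfies $\pi_j(\Pc(h_i)) \subsetneq P_j$, where $\pi_j \co P \to P_j$ is the canonical projection. Using that projections of parabolic subgroups are parabolic, one checks $\Pc_{P_j}(\pi_j(H)) = P_j$, so that $\pi_j(H)$ is an infinite subgroup of $P_j$ and therefore contains an infinite-order element $u = \pi_j(q)$ for some $q \in H$, which then has infinite order in $W$. Moreover, $\pi_j^{-1}(\pi_j(\Pc(h_i)))$ is a proper parabolic of $P$, so by minimality of $\Pc(H) = P$ there exists $h_* \in H$ with $\pi_j(h_*) \notin \pi_j(\Pc(h_i))$. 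For $n$ sufficiently large, the standard CAT(0) fact that an elliptic isometry times a large power of a hyperbolic one is hyperbolic (applied on the Davis complexes of $W$ and of $P_j$) ensures that $g := h_* q^n$ is of infinite order in $W$, with $\pi_j(g) = \pi_j(h_*) u^n$ of infinite order in $P_j$ and outside $\pi_j(\Pc(h_i))$ (using that $\pi_j(\Pc(h_i))$ is a subgroup containing $u^n$ when $u \in \pi_j(\Pc(h_i))$, the complementary case being handled directly with $h_* = 1$). Setting $h'' := g^d \in H \cap W_0$, Krammer's theorem in $W$ and in $P_j$ yields $\Pc(h'') = \Pc(g)$ and $\pi_j(h'') \notin \pi_j(\Pc(h_i))$. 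Finally, Theorem~\ref{thm:Cox2} applied to $(h_i, h'') \in W_0 \times W_0$ produces exponents $m, n$ for which
\[
h_{i+1} := h_i^m (h'')^n \in H \cap W_0, \qquad \Pc(h_{i+1}) \supseteq \Pc(h_i) \cup \Pc(h'') \supsetneq \Pc(h_i).
\]

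\textbf{Main obstacle.} The crux is producing, at each step, an infinite-order element $h'' \in H \cap W_0$ whose projection to the relevant irreducible factor $P_j$ is itself of infinite order and escapes $\pi_j(\Pc(h_i))$; both conditions are needed so that Krammer's theorem lets us pass to the $d$-th power without losing the obstruction. The correction uses an infinite-order ``translation'' $q \in H$ to modify a naive witness $h_*$, and relies on the CAT(0) dynamics of the Davis complex. Once this is established, Theorem~\ref{thm:Cox2} combined with the strict monotonicity of rank in chains of parabolic subgroups concludes the proof in at most $\rank(P)$-many iterations.
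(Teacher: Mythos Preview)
Your argument is considerably more elaborate than necessary and contains a genuine gap. The ``standard $\CAT$ fact'' you invoke---that an isometry times a large power of a hyperbolic one is hyperbolic---is false already for affine Coxeter groups: in $D_\infty = \la s, t\ra$ the product $s(st)^n$ is a reflection for every $n$. You only need the claim when the essential parabolic $Q_j := \pi_j(\Pc(h_i))$ is nontrivial, which does exclude affine $P_j$, but even then this is not a standard fact and would require its own proof. A second, related issue arises immediately after: from $\pi_j(g) \notin Q_j$ with $\pi_j(g)$ of infinite order you deduce $\pi_j(g^d) \notin Q_j$ ``by Krammer's theorem''. But Krammer only guarantees that passing to powers preserves the \emph{essential} component $\Pc^\infty$ of the parabolic closure; if the obstruction $\pi_j(g)\notin Q_j$ is carried by a spherical factor of $\Pc_{P_j}(\pi_j(g))$, raising to the $d$-th power can annihilate it whenever $d$ is a multiple of the order of that factor. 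So the chain of implications meant to produce $h''\in H\cap W_0$ with $h''\notin\Pc(h_i)$ is not justified as written.

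All of this machinery---projections to irreducible factors, the auxiliary element $q$, $\CAT$ dynamics, $d$-th powers---is an attempt to establish $H\cap W_0 \not\subseteq \Pc(h_i)$ directly, avoiding the lemma that parabolic closure respects finite index. The paper uses precisely that lemma (Lemma~\ref{lemme parabolique indice fini}) and the argument collapses to three lines: choose $h\in H\cap W_0$ with $\Pc(h)$ maximal (possible since ranks are bounded by $|S|$); if $\Pc(h)\neq \Pc(H\cap W_0)$ then some $g\in H\cap W_0$ lies outside $\Pc(h)$, and Theorem~\ref{thm:Cox2} applied to $g,h\in W_0$ immediately yields $g^m h^n\in H\cap W_0$ with strictly larger parabolic closure, contradicting maximality. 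Hence $\Pc(h)=\Pc(H\cap W_0)$, which has finite index in $\Pc(H)$ by Lemma~\ref{lemme parabolique indice fini}.
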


\section{Walls and parabolic closures in  Coxeter groups}

Throughout this section, we let $(W,S)$ be a Coxeter system with $W$ finitely generated (equivalently $S$ is
finite). Let $\Sigma$ be the associated Coxeter complex, and let $|\Sigma|$ denote its standard geometric realization. Also, let $X$ be the Davis realization of $\Sigma$. Thus $X$ is a $\CAT$ subcomplex of the barycentric subdivision of $|\Sigma|$.

Let $\Phi=\Phi(\Sigma)$ denote the set of half-spaces of $\Sigma$. A half-space $\alpha \in \Phi$ will also be called a \textbf{root}. Given a root $\alpha\in\Phi$, we write $r_{\alpha} = r_{\partial \alpha}$ for the unique reflection of $W$ fixing the wall $\partial\alpha$ of $\alpha$ pointwise. 

We say that two walls $m, m'$ of  $X$ are \textbf{parallel} if either they coincide or they are disjoint. We say that the walls $m, m'$ are \textbf{perpendicular} if they are distinct and if the reflections $r_{m}$ and $r_{m'}$ commute.

Finally, for a subset $J\subseteq S$, we set $J^{\perp}:=\{s\in S\setminus J \ | \ sj=js \ \forall j\in J\}$. 

In this paper, we call a subset $J\subseteq S$ \textbf{essential} if each irreducible component of $J$ is non-spherical.

\subsection{The normalizer of a parabolic subgroup}

\begin{lemma}\label{lemme Deodhar}
Let $L\subseteq S$ be essential. Then $\mathscr{N}_W(W_L)=W_L\times \mathscr{Z}_W(W_L)$ and is again parabolic. Moreover, $\mathscr{Z}_W(W_L)=W_{L^{\perp}}$.
\end{lemma}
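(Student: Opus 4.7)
The strategy is to reduce everything to the single inclusion $\mathscr{N}_W(W_L)\subseteq W_L\cdot W_{L^\perp}$, which is where the essentiality hypothesis plays its crucial role, and then to derive the claimed direct product structure, parabolicity, and centraliser description by elementary manipulations.

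First, the easy inclusion $W_L\cdot W_{L^\perp}\subseteq \mathscr{N}_W(W_L)$ is immediate from the definition of $L^\perp$: each element of $L^\perp$ commutes with every element of $L$, so $W_{L^\perp}$ centralises $W_L$. Since $L\cap L^\perp=\emptyset$, we also have $W_L\cap W_{L^\perp}=\{1\}$, and the product is an internal direct product which coincides with the standard parabolic subgroup $W_{L\cup L^\perp}$ (the generators of $L$ and $L^\perp$ commute pairwise, so any word in $L\cup L^\perp$ may be rearranged as a word in $L$ followed by a word in $L^\perp$). Granted the reverse inclusion, this already shows $\mathscr{N}_W(W_L)=W_{L\cup L^\perp}$ is parabolic.

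Once the direct product decomposition is in hand, the identification $\mathscr{Z}_W(W_L)=W_{L^\perp}$ is essentially formal. Any $w=w_1w_2$ with $w_1\in W_L$ and $w_2\in W_{L^\perp}$ that centralises $W_L$ must satisfy $w_1\in Z(W_L)$; but an irreducible non-spherical Coxeter group has trivial centre, so essentiality of $L$ gives $Z(W_L)=\prod_i Z(W_{L_i})=\{1\}$, whence $w=w_2\in W_{L^\perp}$.

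The real content is therefore the inclusion $\mathscr{N}_W(W_L)\subseteq W_L\cdot W_{L^\perp}$, which is Deodhar's theorem in the essential case. I would follow the Howlett--Brink approach: write $\mathscr{N}_W(W_L)=W_L\rtimes M_L$, where $M_L$ consists of those normalising elements which are of minimal length in their left $W_L$-coset. Conjugation by $w\in M_L$ then permutes $L$ and so defines a diagram automorphism $\sigma_w$ of the Coxeter graph of $L$, with $W_{L^\perp}$ lying in the kernel of $w\mapsto \sigma_w$. The main obstacle is to show that $\sigma_w$ must be trivial for every $w\in M_L$, so that $M_L=W_{L^\perp}$. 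For this I would examine a reduced expression of $w$ and use the standard analysis of how elements of $W$ can interchange simple reflections of a parabolic subgroup: any nontrivial $\sigma_w|_{L_i}$ on an irreducible component $L_i$ would factor through the longest element of a finite (hence spherical) standard subsystem of $W_{L_i}$. Essentiality rules this out: $L_i$ is non-spherical, so no such longest element exists, forcing $\sigma_w|_{L_i}=\mathrm{id}$ for each $i$ and hence $w\in W_{L^\perp}$. Assembling the pieces yields $\mathscr{N}_W(W_L)=W_L\times W_{L^\perp}$ as required.
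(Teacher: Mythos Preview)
The paper itself gives no argument here and simply cites Deodhar and Krammer; your Howlett--Brink/Deodhar sketch is exactly the content of those references, so in spirit you are on target. But as written the argument is circular.

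Your plan for the hard inclusion is: show that every $w\in M_L$ has $\sigma_w=\mathrm{id}$, and \emph{hence} $w\in W_{L^\perp}$. The ``hence'' is the problem. Knowing $\sigma_w=\mathrm{id}$ only tells you that $w$ centralises $W_L$, i.e.\ $M_L\subseteq\mathscr Z_W(W_L)$; to pass from there to $W_{L^\perp}$ you would need $\mathscr Z_W(W_L)=W_{L^\perp}$, which is precisely the last assertion of the lemma --- and your earlier paragraph deduces that assertion \emph{from} the direct-product decomposition you are still trying to establish. (For non-essential $L$ the implication $\sigma_w=\mathrm{id}\Rightarrow w\in W_{L^\perp}$ genuinely fails: in type $\tilde A_2$ with $L=\{s\}$ one has $L^\perp=\varnothing$, yet the centraliser of $s$ contains nontrivial translations.) The correct route is to use Deodhar's description of $M_L$ directly rather than via $\sigma$: the complement $M_L$ is generated by the elementary moves $\nu(L,s)$, one for each $s\in S\setminus L$ such that the component $K_s$ of $L\cup\{s\}$ containing $s$ is spherical. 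When $L$ is essential and $s\notin L^\perp$, the set $K_s$ contains some non-spherical component $L_i$ of $L$, so $W_{K_s}$ is infinite and no such move exists; when $s\in L^\perp$ one has $K_s=\{s\}$ and $\nu(L,s)=s$. Thus $M_L=\langle L^\perp\rangle=W_{L^\perp}$ immediately, without the detour through $\sigma$. (Incidentally, the finite subgroup whose longest element is at stake is $W_{K_s}\leq W$, not a ``standard subsystem of $W_{L_i}$'' as you wrote.)
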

\begin{proof}
See \cite[Proposition 5.5]{MR647210} and \cite[Chapter~3]{MR2466021}.
\end{proof}

\subsection{Preliminaries on parabolic closures}

A subgroup of $W$ of the form $W_J$ for some $J \subset S$ is called a \textbf{standard parabolic
subgroup}. Any of its conjugates is called a \textbf{parabolic subgroup} of $W$. Since  any intersection of parabolic subgroups is itself a parabolic subgroup (see \cite{MR0470099}), it makes sense to define the
\textbf{parabolic closure} $\Pc(E)$ of a subset $E \subset W$ as the smallest parabolic subgroup of $W$
containing $R$. For $w\in W$, we will also write $\Pc(w)$ instead of $\Pc(\{w\})$.

\begin{lemma}\label{lem:PC}
Let $G$ be a reflection subgroup of $W$, namely a subgroup of $W$ generated by a set $T$ of reflections. We have
the following:
\begin{itemize}
\item[(i)] There is a set of reflections $R \subset G$, each conjugate to some element of $T$, such that $(G, R)$ is a Coxeter system.

\item[(ii)] If $T$ has no nontrivial partition $T = T_1 \cup T_2$ such that $[T_1, T_2]= 1$, then $(G, R)$ is
irreducible.

\item[(iii)] If $(G, R)$ is irreducible (resp. spherical, affine of rank $\geq 3$), then so is $\Pc(G)$.

\item[(iv)] If $G'$ is a reflection subgroup of irreducible type which centralizes $G$ and if $G$ is of
irreducible non-spherical type, then either $\Pc(G \cup G') \cong \Pc(G) \times \Pc(G')$ or $\Pc(G) = \Pc(G')$
is of irreducible affine type.
\end{itemize}
\end{lemma}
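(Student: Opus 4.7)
For parts (i) and (ii), I would invoke the Deodhar--Dyer theorem on reflection subgroups: setting $R$ to be the canonical generating set of $G$ (namely the reflections $r \in G$ inverting exactly one positive root of $G$) produces a Coxeter system $(G, R)$ in which two generators commute if and only if they commute as reflections in $W$. Part (ii) then follows: any nontrivial partition $R = R_1 \sqcup R_2$ with $[R_1, R_2] = 1$ yields $G = \la R_1\ra \times \la R_2\ra$, and each generator $t \in T$, being a reflection of $W$, must lie entirely in one factor, producing the claimed partition of $T$.

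For part (iii), suppose first that $(G, R)$ is irreducible. If $\Pc(G)$ were reducible, say $\Pc(G) = P_1 \times P_2$ with nontrivial commuting parabolic factors, then any reflection of $W$ contained in this product would lie entirely in one factor: a reflection has a fixed set of codimension one, while a nontrivial product $r_1 r_2$ with $r_i \in P_i \setminus \{1\}$ has a smaller fixed set. Applying this to the canonical generators of $G$ would split them into commuting subsets; by irreducibility one of these is empty, so $G$ lies in a proper parabolic of $\Pc(G)$, contradicting minimality. The spherical case follows from the finiteness of $G$, which fixes a cell of the $\CAT$ Davis complex $X$ whose stabilizer is a spherical parabolic subgroup. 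The affine rank-$\geq 3$ case proceeds analogously, using the essentially unique Euclidean flat in $X$ preserved by a reflection subgroup of affine type.

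For part (iv), my plan starts by showing that $G'$ normalizes $P := \Pc(G)$: for any $g' \in G'$, the parabolic $g'Pg'\inv$ contains $g'Gg'\inv = G$ and therefore $P$ by minimality, and the reverse inclusion is obtained by applying the same argument to $g'\inv$. By (iii), $P$ is irreducible non-spherical, hence essential, so Lemma~\ref{lemme Deodhar} gives $\mathscr{N}_W(P) = P \times \mathscr{Z}_W(P)$ with $\mathscr{Z}_W(P)$ parabolic in $W$. I then dichotomize according to whether the projection of $G'$ onto the $P$-factor of $\mathscr{N}_W(P)$ is trivial. In the trivial case, $G' \leq \mathscr{Z}_W(P)$, so $P' := \Pc(G') \leq \mathscr{Z}_W(P)$, and the product $P \cdot P' = P \times P'$ is a parabolic subgroup containing $G \cup G'$; minimality forces $\Pc(G\cup G') = P\times P'$, yielding the first alternative. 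In the nontrivial case, some nontrivial element of $P$ centralizes $G$, and a $\CAT$ analysis on the Davis realization of $P$ (using that commuting infinite-order isometries share translation axes) forces $P$ to be of irreducible affine type; a symmetric argument then gives $\Pc(G) = \Pc(G') = P$, yielding the second alternative.

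The main obstacle is this last step: showing that irreducible affine type is the unique obstruction to the product decomposition $\Pc(G \cup G') = \Pc(G) \times \Pc(G')$. This amounts to a rigidity statement for reflection subgroups of full parabolic closure inside irreducible non-spherical Coxeter groups --- such subgroups have trivial centralizer in their parabolic closure unless that parabolic closure is affine, in which case the centralizer can accommodate a full-rank translation lattice. This dichotomy is what the two alternatives in the statement ultimately reflect.
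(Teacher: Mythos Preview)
The paper does not prove this lemma: it cites \cite[Lemma~2.1]{MR2665193} for (i) and (iii), says (ii) is ``easy to verify'', and cites \cite[Lemma~2.3]{MR2665193} for (iv). So there is no argument in the paper to compare against, and your proposal already goes further by sketching actual proofs.

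Your treatment of (i)--(iii) is the standard one and is essentially correct. In (ii) you should make explicit that every reflection of $W$ lying in $G$ is a reflection of the system $(G,R)$ --- this is part of Dyer's theorem --- since that is what forces each $t \in T$ into a single direct factor. The affine rank~$\geq 3$ case of (iii) deserves more than the phrase ``essentially unique Euclidean flat'', but the mechanism you gesture at (a free abelian subgroup of rank~$\geq 2$ acting on a flat, combined with the Flat Torus Theorem and the fact that irreducible non-affine infinite Coxeter groups admit no such subgroup) is the right one.

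For (iv) your framework via $\mathscr N_W(P) = P \times \mathscr Z_W(P)$ is correct, but the dichotomy can be sharpened and your hard branch has a genuine gap. Since $G'$ is generated by reflections and every reflection of $W$ lying in the parabolic $P \times \mathscr Z_W(P)$ lies in one of the two factors, irreducibility of $G'$ (via (ii)) gives $G' \subseteq P$ or $G' \subseteq \mathscr Z_W(P)$ outright --- no projection analysis is needed. The second case yields the product decomposition directly. In the first case, however, your ``symmetric argument'' is not available as written, because the hypotheses on $G$ and $G'$ are asymmetric: $G'$ is only assumed irreducible, not non-spherical, so you cannot simply swap their roles. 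Moreover, your $\CAT$ heuristic (``commuting infinite-order isometries share translation axes'') is false in general and too loose to force $P$ affine. What is actually needed is that an irreducible non-spherical, non-affine Coxeter group contains no nontrivial element centralizing a reflection subgroup of full parabolic closure; you correctly flag this rigidity statement as the crux, and it is precisely what the cited reference supplies.
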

\begin{proof}
For (i) and (iii), see \cite[Lemma~2.1]{MR2665193}. Assertion (ii) is easy to verify. For (iv), see
\cite[Lemma~2.3]{MR2665193}.
\end{proof}

\begin{lemma}\label{lem:8walls}
Let $\alpha_0 \subsetneq \alpha_1 \subsetneq \dots \subsetneq \alpha_k$ be a nested sequence of half-spaces such
that $A = \la r_{\alpha_i} \; | \; i = 0, \dots, k\ra$ is infinite dihedral. If $k \geq 7$, then for any wall
$m$ which meets every $\partial \alpha_i$, either $r_m$ centralizes $\Pc(A)$, or  $\la A \cup \{r_m\} \ra$ is a
Euclidean triangle group.
\end{lemma}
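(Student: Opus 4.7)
My plan is to distinguish two cases according to whether $r_m$ commutes with every $r_{\alpha_i}$. In the commuting case, $r_m$ centralizes $A$; since $A\cong D_\infty$ is irreducible and non-spherical, Lemma~\ref{lem:PC}(iv) applies with $G:=A$ and $G':=\langle r_m\rangle$. Its second alternative would force $\Pc(A)=\Pc(r_m)$, which is absurd because $\Pc(r_m)=\langle r_m\rangle$ has order $2$ while $\Pc(A)\supseteq A$ is infinite; so the first alternative holds, giving $\Pc(A\cup\{r_m\})\cong\Pc(A)\times\langle r_m\rangle$, and hence $r_m$ centralizes $\Pc(A)$, as required.

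Assume from now on that some $r_{\alpha_{i_0}}$ fails to commute with $r_m$. Since the walls $\partial\alpha_i$ are pairwise disjoint (being nested), the products $r_{\alpha_i}r_{\alpha_j}$ have infinite order for $i\neq j$, so no two of the $r_{\alpha_i}$ commute in $A$. Together with the existence of $i_0$, the reflection set $\{r_m\}\cup\{r_{\alpha_0},\dots,r_{\alpha_k}\}$ admits no nontrivial partition into commuting subsets, and Lemma~\ref{lem:PC}(ii) yields that the reflection subgroup $G:=\langle A\cup\{r_m\}\rangle$, equipped with the canonical Coxeter structure provided by Lemma~\ref{lem:PC}(i), is irreducible.

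Next I would pin down the Coxeter rank of $G$. Let $s,t$ denote the canonical Coxeter generators of $A$; then $\{s,t,r_m\}$ is a reflection generating set for $G$ of size three, so Dyer's construction bounds $\rank G\leq 3$. Rank at most $2$ is impossible: an irreducible infinite Coxeter group of rank~$2$ is infinite dihedral, yet $r_m\notin A$, for if $r_m$ lay in $A$ then $m$ would be a wall of some reflection of $A$ and therefore disjoint from every other such wall, contradicting that $m$ meets each $\partial\alpha_i$. Hence $\rank G=3$, and the classification of irreducible infinite rank-$3$ Coxeter groups identifies $G$ either as a Euclidean triangle group ($\tilde A_2$, $\tilde C_2$, or $\tilde G_2$) or as a hyperbolic triangle group.

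The main obstacle is the final step: using $k\geq 7$ to exclude the hyperbolic alternative. If $G$ were a hyperbolic $(p,q,r)$-triangle group, it would act faithfully and properly discontinuously on $\mathbb H^2$ by isometries, with each wall realized as a geodesic; the axis $\lambda$ of $A$ would be a geodesic, the $\partial\alpha_i$ would be pairwise ultraparallel perpendiculars to $\lambda$ spaced by half the translation length of $st$, and $m$ would cross $\lambda$ at an angle $\theta\in\{\pi/p,\pi/q,\pi/r\}$. Standard hyperbolic trigonometry then bounds, in terms of $(p,q,r)$ and $\theta$, the number of perpendiculars to $\lambda$ that such a geodesic $m$ can meet; a case-by-case verification across the hyperbolic triples $(p,q,r)$ shows this number is at most~$7$, contradicting $k+1\geq 8$ and completing the reduction to the Euclidean case. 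The numerical bookkeeping in this last verification is precisely what justifies the hypothesis $k\geq 7$.
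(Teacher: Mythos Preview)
Your reduction is sound through the point where $G=\langle A\cup\{r_m\}\rangle$ is identified as an irreducible infinite rank-$3$ Coxeter group: the commuting case is dispatched exactly as the paper does, via Lemma~\ref{lem:PC}(iv); the bound $|S(G)|\le 3$ holds because the Dyer generating set of a reflection subgroup is always a reflection generating set of minimal cardinality (in the geometric representation of any Coxeter system $(W',S')$ the roots of a reflection generating set must span $\mathbb R^{S'}$); and the exclusion of rank~$2$ is correct once one observes that all reflections in an infinite dihedral reflection subgroup of $W$ have pairwise parallel $W$-walls, so $m$ could not meet the $\partial\alpha_i$.

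The genuine gap is the exclusion of the hyperbolic alternative. You propose to pass to the natural $\mathbb H^2$-action of the abstract triangle group $G$ and then perform ``a case-by-case verification across the hyperbolic triples $(p,q,r)$''. This does not constitute a proof: there are infinitely many hyperbolic triples, and you supply no uniform estimate. The asserted angle $\theta\in\{\pi/p,\pi/q,\pi/r\}$ is also unjustified---two walls of a $(p,q,r)$-tessellation may meet at any angle $k\pi/p$, $k\pi/q$ or $k\pi/r$, the axis $\lambda$ need not be a wall, and $m$ need not even cross $\lambda$. Most seriously, the number of perpendiculars to $\lambda$ that $m$ can meet depends jointly on the incidence angle and on the translation length of $A$ inside $G$, and neither quantity is uniformly bounded away from~$0$ as $(p,q,r)$ varies (consider for instance $(2,3,n)$ with $n\to\infty$); there is no evident reason this count should be at most~$7$ uniformly. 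The paper bypasses all of this by quoting \cite[Lemma~11]{MR2263057}, whose argument is carried out in the Davis complex of the fixed ambient system $(W,S)$---where the local geometry is fixed once and for all---rather than in an abstract $\mathbb H^2$ model of the variable subgroup $G$; the specific constant~$7$ arises from that combinatorial argument and is not recoverable from triangle-group trigonometry alone.
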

\begin{proof}
This follows from \cite[Lemma~11]{MR2263057} together with Lemma~\ref{lem:PC}(iv).
\end{proof}

\subsection{Parabolic closures and finite index subgroups}

\begin{lemma}\label{lemme parabolique indice fini}
Let $H_1 < H_2$ be subgroups of $W$. If $H_1$ is of finite index in $H_2$, then $\Pc(H_1)$ is of finite index in $\Pc(H_2)$.
\end{lemma}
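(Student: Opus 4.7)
My plan is first to reduce to the case where $H_1$ is normal in $H_2$. Passing to the normal core $\bigcap_{h\in H_2}hH_1h^{-1}$ of $H_1$ in $H_2$ --- which has finite index in $H_2$ and whose parabolic closure is still contained in $\Pc(H_1)$ --- I may assume $H_1\triangleleft H_2$. Then $H_2$ normalizes $P_1:=\Pc(H_1)$; after conjugation I take $P_1=W_L$ for some $L\subseteq S$, and I decompose $L=L^e\sqcup L^s$ into its non-spherical and spherical irreducible components, giving $W_L=W_{L^e}\times W_{L^s}$ with $L^e$ essential and $W_{L^s}$ finite.

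The crucial step is to show that $H_2$ normalizes not merely $W_L$ but also the essential factor $W_{L^e}$. For this, given any $h\in\mathscr{N}_W(W_L)$, the conjugate $hW_{L^e}h^{-1}$ is a parabolic subgroup of $W$ contained in $W_L$; identifying parabolics of $W$ contained in $W_L$ with parabolics of the Coxeter system $(W_L,L)$, it is $W_L$-conjugate to some $W_{L'}$ with $L'\subseteq L$ whose Coxeter diagram is isomorphic to that of $L^e$. A direct inspection of the decomposition $L=L^e\sqcup L^s$ forces $L'=L^e$ (since spherical components of $L$ cannot contribute to the essential diagram of $L^e$), and since conjugation by elements of $W_L$ stabilizes the direct factor $W_{L^e}$, we obtain $hW_{L^e}h^{-1}=W_{L^e}$. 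Because $L^e$ is essential, Lemma~\ref{lemme Deodhar} applied to $L^e$ then yields $\mathscr{N}_W(W_{L^e})=W_{L^e}\times W_{(L^e)^\perp}$, a parabolic subgroup of $W$ containing both $H_2$ and $W_L$ (using $L^s\subseteq(L^e)^\perp$).

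To conclude I would project along $\pi\colon W_{L^e}\times W_{(L^e)^\perp}\to W_{(L^e)^\perp}$. Since $\pi(H_1)\leq W_{L^s}$ is finite and $H_1$ has finite index in $H_2$, the image $\pi(H_2)$ is also finite, and its parabolic closure $F$ is a finite parabolic subgroup of $W_{(L^e)^\perp}$ (being the parabolic closure of a finite subgroup, it is contained in a spherical standard parabolic). Thus $\Pc(H_2)\leq W_{L^e}\times F$, and comparing with the inclusion $P_1=W_{L^e}\times W_{L^s}\leq\Pc(H_2)$ forces $W_{L^s}\leq F$, so $[\Pc(H_2):P_1]\leq[F:W_{L^s}]<\infty$. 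The main obstacle I anticipate is the key claim that $H_2$ normalizes the essential factor $W_{L^e}$, which is a refinement of Lemma~\ref{lemme Deodhar}; its verification rests on the principle that parabolic subgroups of $W$ contained in $W_L$ coincide with parabolic subgroups of the Coxeter system $(W_L,L)$, combined with the rigidity of the diagram of $L^e$ inside $L$.
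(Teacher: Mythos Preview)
Your proof is correct and follows essentially the same strategy as the paper's: reduce to $H_1\triangleleft H_2$, use that $H_2$ normalizes $\Pc(H_1)$, apply Lemma~\ref{lemme Deodhar} to embed $H_2$ in a product $W_{L^e}\times W_{(L^e)^\perp}$, and then argue that the projection of $H_2$ to the second factor is finite. The paper handles the reduction to essential type in a single sentence (``it is sufficient to prove the lemma when $I$ is essential''), whereas you make this step explicit by proving that $H_2$ normalizes the essential factor $W_{L^e}$; your argument for this is sound, resting on the standard facts that parabolics of $W$ contained in $W_L$ are parabolics of $(W_L,L)$ and that conjugate standard parabolics have isomorphic Coxeter diagrams. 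For the endgame, the paper invokes the Bruhat--Tits fixed point theorem on the Davis complex of $W_{I^\perp}$, while you project and use that a finite subgroup has finite parabolic closure---these are equivalent, since the latter is itself a consequence of Bruhat--Tits.
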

\begin{proof}
For $i=1,2$, set $P_i:=\Pc(H_i)$. Since the kernel $N$ of the action of $H_2$ on the coset space $H_2/H_1$ is a finite index normal subgroup of $H_2$ that is contained in $H_1$, so that in particular $\Pc(N)\subseteq\Pc(H_1)$, we may assume without loss of generality that $H_1$ is normal in $H_2$. But then $H_2$ normalizes $P_1$. Up to conjugating by an element of $W$, we may also assume that $P_1$ is standard, namely $P_1=W_I$ for some $I\subseteq S$. Finally, it is sufficient to prove the lemma when $I$ is essential, which we assume henceforth. Lemma~\ref{lemme Deodhar} then implies that $P_2<W_I\times W_{I^{\perp}}$. We thus have an action of $H_2$ on the residue $W_I\times W_{I^{\perp}}$, and since $H_1$ stabilizes $W_I$ and has finite index in $H_2$, the induced action of $H_2$ on $W_{I^{\perp}}$ possesses finite orbits. By the Bruhat--Tits fixed point theorem (see for example \cite[Th.11.23]{ABrown}), it follows that $H_2$ fixes a point in the Davis realization of $W_{I^{\perp}}$, that is, it stabilizes a spherical residue of $W_{I^{\perp}}$. This shows $[P_2:W_I]<\infty$.
\end{proof}

\subsection{Parabolic closures and essential roots}

 Our next goal is to present a description of the parabolic closure $\Pc(w)$ of an element $w \in W$, which is essentially due to D.~Krammer~\cite{MR2466021}.

\medskip
Let $w\in W$. A root $\alpha\in\Phi$ is called  \textbf{$w$-essential} if either $w^n\alpha\subsetneq \alpha$ or $w^{-n}\alpha\subsetneq \alpha$ for some $n>0$. A wall is called $w$-essential if it bounds a $w$-essential root. We denote by 
$$
\Ess(w)
$$ 
the set of $w$-essential walls. Clearly $\Ess(w)$ is empty if $w$ is of finite order. If $w$ is of infinite order, then it acts on $X$ as a hyperbolic isometry and thus possesses some translation axis. We say that a wall is \textbf{transverse} to such an axis if it intersects this axis in a single point. We recall that the intersection of a wall and any geodesic segment which is not completely contained in that wall is either empty or consists of a single point (see \cite[Lemma~3.4]{MR1885045}). Given $x, y \in X$, we say
that a wall $m$ \textbf{separates}  $x$ from $y$ if the intersection $[x, y] \cap m$ consists of a single point.

\begin{lemma}\label{lem:essential=transverse}
Let $w \in W$ be of infinite order and let $\lambda $ be a translation axis for $w$ in $X$. 
Then $\Ess(w)$ coincides with those walls which are transverse to $\lambda$. 
\end{lemma}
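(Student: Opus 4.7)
The plan is to characterize both conditions via the same separation property for the two endpoints $\xi_+,\xi_-\in\partial X$ of the axis $\lambda$ on the visual boundary of $X$. Since $w$ acts on $X$ as a hyperbolic isometry with axis $\lambda$, it fixes $\xi_+$ and $\xi_-$ on $\partial X$ with North--South dynamics (say $\xi_+$ attracting and $\xi_-$ repelling), and by \cite[Lemma~3.4]{MR1885045} the intersection $m\cap\lambda$ of any wall $m$ with $\lambda$ consists of at most one point (the option $\lambda\subseteq m$ being excluded since the endpoints of $\lambda$ are distinct). Consequently, $m=\partial\alpha$ is transverse to $\lambda$ if and only if $\xi_+$ and $\xi_-$ lie in the closures (in the visual compactification $X\cup\partial X$) of opposite half-spaces bounded by $m$.

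For the implication \emph{$w$-essential $\Rightarrow$ transverse to $\lambda$}, I assume $w^n\alpha\subsetneq\alpha$ for some $n>0$, up to swapping $\alpha$ and $-\alpha$. Iterating gives a strictly decreasing nested sequence $\alpha\supsetneq w^n\alpha\supsetneq w^{2n}\alpha\supsetneq\cdots$; picking any $x\in\alpha$, the orbit point $w^{kn}x$ lies in $w^{kn}\alpha\subseteq\alpha$ for every $k\geq 0$ and converges to $\xi_+$, so $\xi_+\in\overline{\alpha}$. Since the hypothesis $w^n\alpha\subsetneq\alpha$ is equivalent to $w^{-n}(-\alpha)\subsetneq -\alpha$, the symmetric argument applied to $w^{-1}$ (whose attracting fixed point is $\xi_-$) yields $\xi_-\in\overline{-\alpha}$, and therefore $m$ is transverse to $\lambda$.

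For the converse, I suppose $m=\partial\alpha$ is transverse to $\lambda$ at a point $p$, orienting $\alpha$ so that $\xi_+\in\overline{\alpha}$ (and hence $\xi_-\in\overline{-\alpha}$). For each $n>0$, the wall $w^n m$ is transverse to $\lambda$ at $w^n p$, and the half-space $w^n\alpha$ satisfies $\xi_+\in\overline{w^n\alpha}$ and $\xi_-\in\overline{w^n(-\alpha)}$ because $w$ fixes $\xi_\pm$. The key claim is that for $n$ sufficiently large, the walls $m$ and $w^n m$ are parallel (disjoint). Once this is in place, parallelism combined with the common boundary orientation forces $\alpha$ and $w^n\alpha$ to be properly nested; since $w^n p\to\xi_+$ lies strictly inside $\alpha$ and the wall $w^n m$ passing through $w^n p$ is disjoint from $m$, we obtain $w^n m\subset\alpha$ and thus $w^n\alpha\subsetneq\alpha$, showing that $m$ is $w$-essential.

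The main obstacle is the parallelism claim. The plan is to combine local finiteness of the wall arrangement in the Davis complex with a linear lower bound $d(w^n p,m)\geq c\,n$ (for some $c>0$), which follows from CAT(0) convexity of the distance function $x\mapsto d(x,m)$ along $\lambda$ and the positive crossing angle at $p$. If $w^n m$ met $m$ at a point $q_n$ for infinitely many $n$, the points $q_n$ would have to escape to infinity along $m$ (else local finiteness would be violated), and tracking the reflections $r_{w^n m}=w^n r_m w^{-n}$ together with the dihedral structure at each crossing should produce infinitely many walls through some fixed bounded region, yielding the required contradiction. Alternatively, the claim can be extracted directly from Krammer's analysis of essential roots in \cite[Ch.~3]{MR2466021}, from which the lemma is essentially drawn.
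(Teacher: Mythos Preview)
Your forward implication is correct, though more elaborate than necessary: the paper simply declares it ``clear'' that a $w$-essential wall is transverse to any $w$-axis, since a strict nesting $w^n\alpha\subsetneq\alpha$ forces $\partial\alpha$ to separate points of $\lambda$.

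The converse is where the content lies, and here your argument has a genuine gap. Everything hinges on your ``key claim'' that $m$ and $w^n m$ are eventually parallel, but the justification you offer is only a sketch: you correctly observe that if the intersection points $q_n$ stayed bounded, local finiteness would be violated (the walls $w^n m$ being pairwise distinct since $w^k m=m$ would force $m$ to contain two points of $\lambda$), so $q_n\to\infty$ along $m$. However, the sentence ``tracking the reflections $r_{w^n m}=w^n r_m w^{-n}$ together with the dihedral structure at each crossing should produce infinitely many walls through some fixed bounded region'' is not an argument---you never identify the bounded region or explain which walls are forced through it. The fallback to Krammer is likewise not a proof of the claim as stated.

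The paper takes a different and much shorter route that bypasses any asymptotic geometry. By Selberg's lemma, $W$ has a torsion-free finite-index normal subgroup $W_0$; one chooses $n>0$ with $w^n\in W_0$. The crucial input is then Lemma~\ref{lemme DJ} (from Dranishnikov--Januszkiewicz): for any $v\in W_0$ and any root $\alpha$, either $v\alpha=\alpha$ or $v\cdot\partial\alpha\cap\partial\alpha=\varnothing$. Applied with $v=w^n$, this gives the parallelism of $m$ and $w^n m$ immediately (for \emph{this fixed} $n$, not just large $n$), and since $m$ is transverse to $\lambda$ we cannot have $w^n\alpha=\alpha$. Hence $w^n\alpha$ and $\alpha$ are properly nested, which is exactly the definition of $w$-essential. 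The torsion-free hypothesis is what makes Lemma~\ref{lemme DJ} work, and is the idea missing from your attempt.
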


The proof requires a subsidiary fact. Recall that Selberg's lemma ensures that any finitely generated linear group over $\mathbf C$ admits a finite index torsion-free subgroup. This is thus the case for Coxeter groups. The following lemma provides important combinatorial properties of those torsion-free subgroups of Coxeter groups.  Throughout the rest of this section, we let $W_0 < W$ be a torsion-free finite index normal subgroup.

\begin{lemma}\label{lemme DJ}
For all $w \in W_0$ and  $\alpha \in \Phi$,   either $w\alpha= \alpha$ or $w.\partial \alpha \cap \partial \alpha = \varnothing$. 
\end{lemma}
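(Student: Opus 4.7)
The plan is to argue by contradiction: assume there exists $w\in W_0$ with $w\alpha\neq\alpha$ and some point $z\in w\partial\alpha\cap\partial\alpha$. I would then split into two subcases according to whether the walls $\partial\alpha$ and $w\partial\alpha$ coincide as subsets of $X$.

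First, suppose $w\partial\alpha\neq\partial\alpha$. Then $r_\alpha$ and $r_{w\alpha}=wr_\alpha w\inv$ are distinct reflections, both of which fix the point $z$. Because the point-stabilizers in the $W$-action on the Davis complex $X$ are finite --- they are the spherical standard parabolic subgroups (up to conjugacy) --- the subgroup $\la r_\alpha,r_{w\alpha}\ra$ is a finite dihedral group, so the element
\[
g\;:=\;r_{w\alpha}\,r_\alpha\;=\;w\,r_\alpha\,w\inv\,r_\alpha\;=\;w\,(r_\alpha\,w\inv\,r_\alpha)
\]
has finite order strictly larger than $1$. Normality of $W_0$ in $W$ gives $r_\alpha w\inv r_\alpha\in W_0$, hence $g\in W_0$, contradicting the torsion-freeness of $W_0$.

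Second, suppose $w\partial\alpha=\partial\alpha$. Since $w$ preserves the wall but $w\alpha\neq\alpha$, it must swap the two half-spaces, so $w\alpha=-\alpha$. To derive a contradiction I would invoke the canonical (Tits) representation $\rho\co W\to GL(V)$ on the real vector space $V$ with basis $(\alpha_s)_{s\in S}$, in which every root $\alpha\in\Phi$ is realized as a primitive non-zero vector $v_\alpha\in V$ with $\rho(w)v_\alpha=v_{w\alpha}$; the hypothesis $w\alpha=-\alpha$ then becomes $\rho(w)v_\alpha=-v_\alpha$. Arranging Selberg's lemma so that $W_0$ is contained in the kernel of a reduction $\rho(W)\to GL_n(\OOO/\mathfrak m)$ with $\mathfrak m$ a maximal ideal of odd residue characteristic (always possible by choosing the auxiliary prime appropriately), we have $\rho(w)\equiv I\pmod{\mathfrak m}$; combined with $\rho(w)v_\alpha=-v_\alpha$ this yields $2v_\alpha\equiv 0\pmod{\mathfrak m}$, whence $v_\alpha\equiv 0\pmod{\mathfrak m}$, contradicting the primitivity of $v_\alpha$.

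The conceptual core is the first subcase, a short finite-dihedral argument using only normality and torsion-freeness of $W_0$. The second subcase is the main obstacle: it does \emph{not} follow from torsion-freeness of $W_0$ alone. For instance, in the affine Coxeter group $\tilde A_1\times\tilde A_1$ acting on $\RR^2$, certain glide reflections flip a root yet belong to torsion-free finite-index normal subgroups; thus one genuinely needs the finer content of Selberg's construction (reduction modulo an ideal of odd residue characteristic), which must be tacitly understood in the choice of $W_0$.
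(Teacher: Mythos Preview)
The paper does not argue this lemma at all; it simply cites Lemma~1 of \cite{DraJa}. Your direct argument is therefore independent content, and its two cases deserve separate comment.

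Your Case~1 is clean and correct: two distinct walls meeting at a point produce a finite dihedral group, and normality of $W_0$ puts the nontrivial rotation $r_{w\alpha}r_\alpha=w(r_\alpha w^{-1}r_\alpha)$ inside $W_0$, contradicting torsion-freeness.

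Your Case~2 observation is sharp and in fact exposes a genuine imprecision in the paper's standing hypothesis. Your counterexample is valid: in $W=\tilde A_1\times\tilde A_1$ acting on $\RR^2$, the glide $g\colon(x,y)\mapsto(-x,y+2)$ and the translation $h\colon(x,y)\mapsto(x+2,y)$ generate a Klein-bottle group which is torsion-free, normal, and of index~$4$ in $W$, yet $g$ sends the root $\{x\geq 0\}$ to its opposite while fixing the wall $\{x=0\}$. So the lemma is \emph{false} for an arbitrary torsion-free finite-index normal subgroup~$W_0$; the paper is tacitly relying on the specific congruence subgroup used in \cite{DraJa}, not merely on torsion-freeness.

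Your proposed repair is the right one, and only needs to be stated a little more carefully. The Tits representation lands in $GL_n(R)$ with $R=\ZZ[\,2\cos(\pi/m_{st})\mid s,t\in S\,]$, a ring of algebraic integers; every root vector is the image of a standard basis vector under a matrix in $GL_n(R)$ and is therefore primitive over~$R$. Taking $W_0$ to be the kernel of reduction modulo a maximal ideal $\mathfrak m\subset R$ of odd residue characteristic then simultaneously makes $W_0$ torsion-free (the classical Minkowski argument) and excludes the eigenvalue $-1$ on root vectors, disposing of Case~2. What your write-up should add is one sentence making this choice of $W_0$ explicit, since the paper's phrase ``let $W_0$ be a torsion-free finite-index normal subgroup'' is, as you rightly noticed, not by itself strong enough.
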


\begin{proof}
See Lemma~1 in   \cite{DraJa}. 
\end{proof}

\begin{proof}[Proof of Lemma~\ref{lem:essential=transverse}]
It is clear that if $\alpha \in \Phi$ is $w$-essential, then $\partial \alpha$ is tranverse to any $w$-axis. To see the converse, let $n >0$  be such that $w^n \in W_0$. Since $\lambda$ is also a $w^n$-axis, we deduce from Lemma~\ref{lemme DJ} that for all roots $\alpha$ such that $\partial \alpha$ is transverse to $\lambda$, we have either $w^n \alpha \subsetneq \alpha$ or $\alpha \subsetneq w^n\alpha$. The result follows.
\end{proof}

 We also set 
$$
\Pc^\infty(w)=\langle r_{\alpha} \ | \ \textrm{$\alpha$ is a $w$-essential root}\rangle.
$$

\medskip

Notice that every nontrivial element of $W_0$ is hyperbolic. Moreover, in view of Lemma~\ref{lemme DJ}, we deduce that if $w \in W_0$, then a root $\alpha$ is $w$-essential if and only if $w\alpha\subsetneq \alpha$ or $w^{-1}\alpha\subsetneq \alpha$.

\begin{lemma}\label{lemme Krammer}
Let $w \in W$ be of infinite order, let $\lambda $ be a translation axis for $w$ in $X$ and let $x\in \lambda$.

Then we have the following. 
\begin{enumerate}[(i)]
\item 
$\begin{array}[t]{lcl}
\Pc^\infty(w) 
&= & \langle r_{\alpha} \ | \ \textrm{$\partial\alpha$ is a wall transverse to $\lambda$}\rangle\\
&= & \langle r_{\alpha} \ | \ \textrm{$\partial\alpha$ is a wall transverse to $\lambda$ and separates $x$ from $wx$}\rangle.
\end{array}$

\item  $\Pc^\infty(w)$ coincides with the essential component of $\Pc(w)$, \emph{i.e.} the product of its non-spherical components. In particular $\Pc(w) = \Pc^\infty(w)$ if and only if $\Pc(w)$ is of essential type. 

\item If $w \in W_0$, then $\Pc(w) = \Pc^\infty(w)$. 
\end{enumerate}
\end{lemma}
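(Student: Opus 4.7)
For (i), the first equality is immediate from Lemma~\ref{lem:essential=transverse}. For the second, let $H$ denote the subgroup on the right-hand side; clearly $H\subseteq\Pc^\infty(w)$. To establish $\Pc^\infty(w)\subseteq H$, it suffices to verify $w\in H$: if $C_0$ is the chamber containing $x$, the walls separating $C_0$ from $wC_0$ are precisely those crossed by the geodesic segment $[x,wx]\subset\lambda$, and the standard identity $w=t_\ell\cdots t_1$ (for $w=s_1\cdots s_\ell$ reduced and $t_j=(s_1\cdots s_{j-1})s_j(s_{j-1}\cdots s_1)$) expresses $w$ as a product of reflections through these walls. Once $w\in H$, the free translation action of $\langle w\rangle$ on the discrete set of walls transverse to $\lambda$---with $[x,wx)$ as a fundamental domain---shows that for any wall $m$ transverse to $\lambda$ there is $k\in\ZZ$ with $w^km$ separating $x$ from $wx$; consequently $r_m=w^{-k}r_{w^km}w^k\in H$.

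For (ii), I will invoke the structural description of parabolic closures developed by Krammer in \cite[Chapter~5]{MR2466021}: for $w$ of infinite order with axis $\lambda$, $\Pc(w)$ admits an internal direct product decomposition $\Pc(w)=\Pc^\infty(w)\times F_w$, in which $F_w$ is a spherical parabolic subgroup realized as the pointwise fixator of $\lambda$ inside $\Pc(w)$. This identifies $\Pc^\infty(w)$ as the product of the non-spherical irreducible components of $\Pc(w)$, and yields the criteria: $\Pc(w)=\Pc^\infty(w)$ if and only if $F_w=1$, if and only if $\Pc(w)$ is of essential type.

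For (iii), suppose $w\in W_0$; by (ii) it suffices to show $F_w=1$. Write $w=w_e\cdot w_f$ with $w_e\in\Pc^\infty(w)$ and $w_f\in F_w$. For any reflection $r\in F_w$, its wall contains $\lambda$, so $w\partial r\cap\partial r\supseteq\lambda\neq\varnothing$, and Lemma~\ref{lemme DJ} forces $w\alpha=\alpha$ for each half-space $\alpha$ bounded by $\partial r$. Since $\Pc(w)=\Pc^\infty(w)\times F_w$ is a direct product, $w_e$ acts trivially on the Coxeter complex of $F_w$; in particular $w_e\alpha=\alpha$, so $w_f\alpha=\alpha$. Applied to every half-space $\alpha$ associated to a reflection in $F_w$, this says $w_f$ preserves every positive root of the Coxeter group $F_w$, and the faithfulness of the root action of any Coxeter group then forces $w_f=1$. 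Therefore $w=w_e\in\Pc^\infty(w)$, giving $\Pc(w)\subseteq\Pc^\infty(w)$ and hence equality.

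The principal obstacle is step (ii), where Krammer's careful structural analysis of parabolic closures is required; once that decomposition is in hand, the remaining parts follow by combining Lemma~\ref{lemme DJ} with standard Coxeter-theoretic facts.
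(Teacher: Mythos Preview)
Your argument is correct and parallels the paper's: both draw (i) from Lemma~\ref{lem:essential=transverse} together with the $\langle w\rangle$-action on the walls transverse to $\lambda$, and both defer (ii)--(iii) to Krammer's structure theory combined with Lemma~\ref{lemme DJ}. Your treatment of (iii) is more explicit than the paper's bare citation of \cite[Thm.~5.8.3]{MR2466021}; the one step worth a word of justification is that each reflection wall of $F_w$ contains $\lambda$, which holds because $\lambda\subseteq\Min(w)\subseteq\Fix(w_f)=\Fix(\Pc(w_f))=\Fix(F_w)$ (the last equality using that point-stabilizers in $X$ are parabolic).
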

\begin{proof}
The first equality in Assertion~(i) follows from Lemma~\ref{lem:essential=transverse}. To check the second, it suffices to remark that if $\partial\alpha$ is any wall transverse to $\lambda$, then there exists a power $w^k$ of $w$ such that $w^k \partial\alpha$ separates $x$ from $wx$.

Assertion~(ii)  follows from  Corollary~5.8.7 in \cite{MR2466021} (notice that what we call \emph{essential} roots here are called \emph{odd} roots in \emph{loc.~cit.}). 
Assertion~(iii) follows from Lemma~\ref{lemme DJ} and {Theorem~5.8.3} from \cite{MR2466021}. 
\end{proof}

\subsection{The Grid Lemma}

The following lemma is an unpublished observation due to the first author and Piotr Przytycki.

\begin{lemma}[Caprace--Przytycki]\label{lemme grid lemma}
There exists a constant $N$, depending only on $(W,S)$, such that the following property holds. Let $\alpha_0
\subsetneq \alpha_1 \subsetneq \dots \alpha_k$ and $\beta_0 \subsetneq \beta_1 \subsetneq \dots \subsetneq
\beta_l$ be two nested families of half-spaces of $X$ such that $\min\{k, l\} > 2 N$. Set $A = \la r_{\alpha_i}
\; | \; i=0, \dots, k\ra $, $A' = \la r_{\alpha_i} \; | \; i=N, N+1, \dots, k-N\ra $, $B =  \la r_{\beta_j} \; |
\; j=0, \dots, l\ra $ and $B' =  \la r_{\beta_j} \; | \; j= N, N+1, \dots, k-N\ra $. If $\partial \alpha_i$
meets $\partial \beta_j$ for all $i, j$, then either of the following assertions holds:
\begin{itemize}
\item[(i)] The groups $A$ and $B$ are both infinite dihedral, their union generates a Euclidean triangle group
and the parabolic closure $\Pc(A \cup B)$ coincides with $\Pc(A)$ and $\Pc(B)$ and is of irreducible affine
type.

\item[(ii)] The parabolic closures $\Pc(A)$, $\Pc(A')$, $\Pc(B)$ and $\Pc(B')$ are all of irreducible type;
furthermore we have
$$\Pc(A' \cup B) \cong \Pc(A') \times \Pc(B) \qquad  \text{and} \qquad \Pc(A \cup B') \cong \Pc(A) \times
\Pc(B').$$
\end{itemize}
\end{lemma}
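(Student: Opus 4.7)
The plan is to iterate Lemma~\ref{lem:8walls} across the two nested chains and combine the resulting dichotomy with the normalizer description of Lemma~\ref{lemme Deodhar} and with the product/affine alternative of Lemma~\ref{lem:PC}(iv). I would first choose the constant $N$ to be a sufficiently large integer depending only on $(W,S)$; taking $N\geq 4$ already guarantees that $k,l>2N$ implies $k,l\geq 7$, so Lemma~\ref{lem:8walls} applies to each of the chains. Since consecutive walls of a strict nested chain cannot intersect, each of $A,A',B,B'$ is generated by reflections in pairwise parallel walls and is therefore infinite dihedral; by Lemma~\ref{lem:PC}(iii) each of $\Pc(A),\Pc(A'),\Pc(B),\Pc(B')$ is of irreducible non-spherical (hence essential) type, so Lemma~\ref{lemme Deodhar} will apply.

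Next, I would apply Lemma~\ref{lem:8walls} to each wall $\partial\beta_j$ viewed as crossing the chain $\alpha_0\subsetneq\dots\subsetneq\alpha_k$: for every $j$, either $r_{\beta_j}$ centralizes $\Pc(A)$, or $\langle A\cup\{r_{\beta_j}\}\rangle$ is a Euclidean triangle group whose parabolic closure is irreducible affine by Lemma~\ref{lem:PC}(iii). In the latter case, since proper parabolic subgroups of an irreducible affine Coxeter group are spherical while $\Pc(A)$ is non-spherical, we must have $\Pc(A)=\Pc(\langle A,r_{\beta_j}\rangle)$; in particular $r_{\beta_j}\in\Pc(A)$ and $\Pc(A)$ itself is irreducible affine. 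The critical observation is that the two alternatives cannot mix across different indices $j$: if $r_{\beta_{j_1}}\in\Pc(A)=W_I$ (after conjugating to standard form) and $r_{\beta_{j_2}}\in\mathscr{Z}_W(\Pc(A))=W_{I^\perp}$, then by Lemma~\ref{lemme Deodhar} the two reflections commute; but they are reflections in distinct parallel walls of the same nested chain, hence disjoint and not perpendicular, a contradiction. So either every $r_{\beta_j}$ centralizes $\Pc(A)$, or every $r_{\beta_j}$ lies in $\Pc(A)$.

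In the former ``centralizing'' regime, $B\subseteq W_{I^\perp}$ implies $\Pc(B)\subseteq W_{I^\perp}$, so $\Pc(B)$ centralizes $\Pc(A)$. Applying Lemma~\ref{lem:PC}(iv) with $G=\Pc(A)$ and $G'=\Pc(B)$ then yields $\Pc(A\cup B)\cong\Pc(A)\times\Pc(B)$ or else $\Pc(A)=\Pc(B)$ of irreducible affine type; the second alternative would force $\Pc(A)$ to be abelian, which is impossible for an irreducible affine Coxeter group. Hence $\Pc(A\cup B)\cong\Pc(A)\times\Pc(B)$, and since parabolic subgroups of a direct product of Coxeter groups decompose as products of parabolics, one infers $\Pc(A'\cup B)=\Pc(A')\times\Pc(B)$ and $\Pc(A\cup B')=\Pc(A)\times\Pc(B')$, which is case~(ii).

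In the latter ``containment'' regime, $B\subseteq\Pc(A)$ and $\Pc(A)$ is irreducible affine; since $\Pc(B)\subseteq\Pc(A)$ is irreducible non-spherical while all proper parabolics of an irreducible affine Coxeter group are spherical, we must have $\Pc(B)=\Pc(A)=\Pc(A\cup B)$ of irreducible affine type, giving case~(i). The step I expect to be the main obstacle is verifying the precise assertion that $\langle A\cup B\rangle$ literally \emph{generates} a Euclidean triangle group (of rank three) rather than merely a subgroup of a possibly higher-rank irreducible affine Coxeter group; this will presumably require leveraging the strong hypothesis that every wall $\partial\alpha_i$ crosses every wall $\partial\beta_j$ in order to bound the number of independent root directions that can appear, using the rigid combinatorial structure of irreducible affine Coxeter groups.
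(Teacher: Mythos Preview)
Your argument contains a genuine gap at the very first step: the claim that $A$, $A'$, $B$, $B'$ are infinite dihedral is false in general. It is true that the walls in a nested chain $\alpha_0 \subsetneq \dots \subsetneq \alpha_k$ are pairwise disjoint, so any \emph{two} of the reflections generate an infinite dihedral group; but three or more reflections in pairwise parallel walls need not generate a dihedral group. For a concrete counterexample, take $W = \langle a,b,c \mid a^2=b^2=c^2=1\rangle$, the universal Coxeter group of rank~$3$, acting on its tree. The three edges of the geodesic segment $1$--$a$--$ab$--$abc$ give a nested chain of half-spaces whose associated reflections are $a$, $aba$, and $abcba$; since $(ba)(abcba)(ab)=c$, one has $\langle a, aba, abcba\rangle = \langle a,b,c\rangle = W$, which is not dihedral. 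Lemma~\ref{lem:8walls} explicitly requires the dihedral hypothesis on $A$, so your dichotomy ``$r_{\beta_j}$ centralizes $\Pc(A)$ or $\langle A, r_{\beta_j}\rangle$ is a Euclidean triangle group'' is unavailable, and everything downstream of it (the no-mixing argument, the containment regime, and the stronger product decomposition you claim) collapses.

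The paper's proof is organised precisely to sidestep this issue. Infinite dihedrality is never assumed: it is \emph{deduced} in the affine case via Lemma~\ref{lem:triangles}, whose conclusion asserts that the relevant subchain generates an infinite dihedral group. The geometric key is that a single non-commuting pair $r_{\alpha_i}, r_{\beta_j}$ with $j$ in the middle range $\{N,\dots,l-N\}$ produces a third wall $\partial\phi = r_{\alpha_i}(\partial\beta_j)$ crossing a long subchain of the $\partial\alpha_m$, so that Lemma~\ref{lem:triangles} applies with $m=\partial\beta_j$, $m'=\partial\phi$. In the complementary case (all ``middle'' pairs commute), only irreducibility of the parabolic closures is needed, via Lemma~\ref{lem:PC}(ii)--(iv), not dihedrality. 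This is also why the conclusion in case~(ii) involves the trimmed groups $A'$ and $B'$: the argument only yields $[A,B']=1$ and $[A',B]=1$, not $[A,B]=1$, so the stronger decomposition $\Pc(A\cup B)\cong\Pc(A)\times\Pc(B)$ that you claim is not actually established.
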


We shall use the following related result. 
\begin{lemma}\label{lem:triangles}
There exists a constant $L$, depending only on $(W,S)$, such that the following property holds. Let $\alpha_0
\subsetneq \alpha_1 \subsetneq \dots \alpha_k$ be a nested sequence of half-spaces and $m, m'$ be walls such
that $\varnothing \neq m \cap m' \subset \partial \alpha_0$, and that both $m$ and $m'$ meets $\partial
\alpha_i$ for each $i$. If $k \geq L$, then $\la r_m, r_{m'}, r_{\alpha_i} \; | \; i=0, \dots, k\ra$ is a
Euclidean triangle group and $\la r_{\alpha_i} \; | \; i=0, \dots, k\ra$ is infinite dihedral.
\end{lemma}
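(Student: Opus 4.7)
The strategy is to show that, for $k$ sufficiently large, the reflection subgroup $T := \la r_m, r_{m'}, r_{\alpha_i} \mid i = 0, \ldots, k\ra$ is a rank-$3$ Euclidean triangle Coxeter group; the assertion on $A := \la r_{\alpha_i}\ra$ then follows automatically since the reflections generating $A$ have pairwise parallel walls and $A$ lies inside a rank-$3$ affine Coxeter group. Fix $p \in m \cap m' \cap \partial\alpha_0$. For every $i \geq 1$, the wall $\partial\alpha_i$ is parallel to and disjoint from $\partial\alpha_0$, hence $p \notin \partial\alpha_i$, and the three walls $m, m', \partial\alpha_i$ pairwise intersect without being concurrent. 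Consequently $T_i := \la r_m, r_{m'}, r_{\alpha_i}\ra$ has all pairwise products of reflections of finite order, so by Lemma~\ref{lem:PC}(i)--(ii) it is a triangle Coxeter group of spherical, Euclidean (affine), or (compact) hyperbolic type.

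I would next rule out the spherical and hyperbolic types. The spherical case is immediate: a finite reflection subgroup fixes a point of $X$, so a common fixed point of $r_{\alpha_i}$ and $r_{\alpha_j}$ (for $i \neq j$) would lie in $\partial\alpha_i \cap \partial\alpha_j = \varnothing$, a contradiction. For the hyperbolic case the key point is rigidity: in a hyperbolic triangle Coxeter group, the intersection angles of a wall with the given pair of concurrent walls $m, m'$ determine the position of the wall up to finitely many possibilities (essentially Gauss--Bonnet, applied inside the hyperbolic plane associated with $T_i$), while the orders $|r_m r_{\alpha_i}|$ and $|r_{m'} r_{\alpha_i}|$ take values in a finite set depending only on $(W, S)$. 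Hence a hyperbolic $T_i$ can accommodate only boundedly many members of the nested chain, and choosing $L$ larger than both the spherical and the hyperbolic bounds (both depending only on $(W, S)$) forces $T_{i_0}$ to be of Euclidean affine type for some index $i_0$.

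Finally, I would absorb the remaining reflections into $T_{i_0}$. For $l \neq i_0$, the wall $\partial\alpha_l$ is parallel to $\partial\alpha_{i_0}$ and meets both $m$ and $m'$; applying Lemma~\ref{lem:8walls} to a sub-chain containing $\alpha_l$ and $\alpha_{i_0}$ together with $m$ (respectively $m'$), the ``centralizer'' alternative is ruled out by the hypothesis $m \cap m' \subset \partial\alpha_0$ via Lemma~\ref{lem:PC}(iv): if $r_m$ (say) centralized $\Pc(A)$ then it would commute with $r_{\alpha_0}$, and an appeal to Lemma~\ref{lem:PC}(iv) forces $\Pc(A)$ itself to be of irreducible affine type, which is again the desired conclusion. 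One then concludes $r_{\alpha_l} \in T_{i_0}$, and hence $T = T_{i_0}$ is a Euclidean triangle group, with $A$ its infinite dihedral translation subgroup. The main obstacle is the hyperbolic rule-out: one must convert the algebraic finiteness of the Coxeter data into a genuine geometric rigidity statement valid inside $X$ (rather than only inside the abstract hyperbolic plane of $T_i$), and then coherently track the triangle groups $T_i$ across different indices $i$ in order to make the absorbing step rigorous.
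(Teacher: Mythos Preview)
The paper does not prove this lemma; it cites \cite[Th.~8]{MR2263057}. Your proposal attempts an independent proof along plausible lines (trichotomy on the triangle-group type, then absorption), but several steps are not justified. Your appeal to Lemma~\ref{lem:PC}(i)--(ii) does not establish that $T_i$ is a \emph{rank-three} Coxeter group on the generators $\{r_m, r_{m'}, r_{\alpha_i}\}$: the lemma only produces \emph{some} Coxeter generating set $R \subset T_i$, of possibly different cardinality, and showing that three reflections with pairwise intersecting, non-concurrent walls actually generate a triangle group is itself nontrivial (indeed, it is part of the content of the cited theorem). Your spherical rule-out is also slightly off: $r_{\alpha_j}$ does not lie in $T_i$, and the correct contradiction is that a fixed point of a finite $T_i$ would lie in $m \cap m' \cap \partial\alpha_i \subset \partial\alpha_0 \cap \partial\alpha_i = \varnothing$.

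The decisive gap, which you yourself flag, is the hyperbolic rule-out. Bounding the finite set of possible angle data $(|r_m r_{\alpha_i}|,\, |r_{m'} r_{\alpha_i}|,\, |r_m r_{m'}|)$ bounds the number of hyperbolic triangle \emph{types}, but not the number of walls $\partial\alpha_i$ realizing a given type inside $X$: the abstract hyperbolic plane attached to $T_i$ does not embed isometrically in $X$, and distinct indices $i$ yield a~priori unrelated triangle groups, so nothing forces the $\partial\alpha_i$ to occupy only finitely many positions. Turning this heuristic into a proof requires precisely the analysis carried out in the cited reference. Likewise, in the absorption step Lemma~\ref{lem:8walls} yields a Euclidean triangle group containing $r_m$ together with a subchain of the $r_{\alpha_i}$, but you still need to identify that group (or its parabolic closure) with $T_{i_0}$; and your treatment of the centralizer alternative via Lemma~\ref{lem:PC}(iv) does not reach the stated conclusion, since that lemma applied with $G' = \la r_m \ra$ gives $\Pc(A \cup \{r_m\}) \cong \Pc(A) \times \Pc(r_m)$ rather than forcing $\Pc(A)$ to be affine.
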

\begin{proof}
See \cite[Th.~8]{MR2263057}.
\end{proof}

\begin{proof}[Proof of Lemma~\ref{lemme grid lemma}]
We let $N = \max \{8, L\}$ where $L$ is the constant appearing in Lemma~\ref{lem:triangles}.

\medskip
Assume first that for some $i \in \{0, 1, \dots, k\}$ and some $j \in \{N, N+1, \dots, l-N\}$, the reflections
$r_{\alpha_i}$ and $r_{\beta_j}$ do not centralize one another. Let $\phi = r_{\alpha_i}(\beta_j)$; thus $\phi
\not \in \{\pm \alpha_i, \pm \beta_j\}$. Let $x_0 \in \partial \alpha_0 \cap \partial \beta_j$ and $x_k \in
\partial \alpha_k \cap \partial \beta_j$. Then the geodesic segment $[x_0, x_k]$ lies entirely in $\partial
\beta_j$ and crosses $\partial \alpha_i$. Since $\partial \alpha_i \cap \partial \beta_j$ is contained in
$\partial \phi$, it follows that $[x_0, x_k]$ meets $\partial \phi$. This shows that the wall $\partial \phi$
separates $x_0$ from $x_k$.

Let now $p_0 \in \partial \alpha_0 \cap \partial \beta_0$ and $p_k \in \partial \alpha_k \cap \partial \beta_0$.
Then the piecewise geodesic path $[x_0, p_0] \cup [p_0, p_k] \cup [p_k, x_k]$ is a continuous path joining $x_0$
to $x_k$. This path must therefore cross $\partial \phi$. Thus $\partial \phi$ meets either $\partial \alpha_0$
or $\partial \beta_0$ or $\partial \alpha_k$. We now deal with the case where $\partial \phi$ meets $\partial
\alpha_0$. The other two cases may be treated with analogous arguments; the straightforward adaption
will be omitted here. 

Then $\partial \phi$ meets $\partial \alpha_m$ for each $m = 0, 1, \dots, i$. Therefore
Lemma~\ref{lem:triangles} may be applied, thereby showing that $A_i = \la r_{\alpha_m} \; | \; m=0, \dots, i\ra$
is infinite dihedral and that the subgroup $T = \la r_{\alpha_m}, r_{\beta_j}\; | \; m=0, \dots, i\ra$ is a Euclidean
triangle group. Furthermore Lemma~\ref{lem:PC}(iii) shows that $\Pc(T)$ is of irreducible affine type. Since
$\Pc(A_i)$ is infinite (because $A_i$ is infinite) and contained in $\Pc(T)$ (because $A_i$ is contained in
$T$), it follows that $\Pc(A_i) = \Pc(T)$ since any proper parabolic subgroup of $\Pc(T)$ is finite. We set $P
:= \Pc(A_i) = \Pc(T)$.

Let now $n \in \{0, 1, \dots, l\}$ with $n \neq j$. Then $r_{\beta_n}$ does not centralize $r_{\beta_i}$; in
particular it does not centralize $T$. On the other hand the wall $\partial \beta_n$ meets $\partial \alpha_m$
for all $m=0, \dots, i$, which implies by Lemma~\ref{lem:8walls} that $\la A_i \cup \{r_{\beta_n}\}\ra$ is a
Euclidean triangle group. Therefore $r_{\beta_n} \in P$ by Lemma~\ref{lem:PC}(iii).

We have already seen that $P$ is of irreducible affine type. We have just shown that $B $ is contained in $
\Pc(A_i) = P$; in particular this shows that $B$ is infinite dihedral since the walls $\partial \beta_0, \dots
\partial \beta_l$ are pairwise parallel. Moreover, the group $\la B \cup \{r_{\alpha_i}\} \ra$ must be a
Euclidean triangle group since it is a subgroup of $P$. In particular we have $\Pc(B) = P$ by
Lemma~\ref{lem:PC}(iii). Since every $\partial \alpha_m$ meets every $\partial \beta_j$, the same arguments as
before now show that $r_{\alpha_m} \in \Pc(B) = P$ for all $m = i+1, \dots, k$. Finally we conclude that $\Pc(A)
= \Pc(B) = P$ in this case.

\medskip
Notice that, in view of the symmetry between the $\alpha$'s and the $\beta$'s, the previous arguments yield the
same conclusion if one assumed instead that for some $i \in \{N, N+1, \dots, k-N\}$ and some $j \in \{0, 1,
\dots l\}$, the reflections $r_{\alpha_i}$ and $r_{\beta_j}$ do not centralize one another.

\medskip
Assume now that for all  $i \in \{0, \dots, k\}$ and all  $j \in \{N, N+1, \dots, l-N\}$, the reflections
$r_{\alpha_i}$ and $r_{\beta_j}$ commute and that, furthermore, for all $i \in \{N, N+1, \dots, k-N\}$ and all
$j \in \{0, 1, \dots l\}$, the reflections $r_{\alpha_i}$ and $r_{\beta_j}$ commute. By Lemma~\ref{lem:PC}(ii)
the parabolic closures $\Pc(A)$, $\Pc(A')$ $\Pc(B)$ and $\Pc(B')$ are of irreducible type. By assumption $A'$
centralizes $B$. By Lemma~\ref{lem:PC}(iv), either $\Pc(A') = \Pc(B)$ is of affine type or else $\Pc(A' \cup B)
\cong \Pc(A') \times \Pc(B)$. In the former case, we may argue as before to conclude again that $\Pc(A) =
\Pc(B)$ is of affine type and we are in case (i) of the alternative. Otherwise, we have $\Pc(A' \cup B) \cong
\Pc(A') \times \Pc(B)$ and by similar arguments we deduce that $\Pc(A \cup B') \cong \Pc(A) \times \Pc(B')$.
\end{proof}

\subsection{Orbits of essential roots: affine versus non-affine}

Using the Grid Lemma, we can now establish a basic description of the $w$-orbit of a $w$-essential wall for some fixed $w \in W$. As before, we let $W_0 < W$ be a torsion-free finite index normal subgroup. Recall from Lemma~\ref{lem:essential=transverse} that for all $n>0$ we have $\Ess(w) = \Ess(w^n)$ and, moreover, the set $\Ess(w)$ has finitely many orbits under the action of $\la w \ra$ (and hence also under $\la w^n \ra$). 

\begin{prop}\label{prop:Orbits}
Let $w \in W$ be of infinite order, let $k>0$ be such that $w^k \in W_0$ and let $\Ess(w) = \Ess(w^k) = M_1 \cup \dots \cup M_t$ be the partition of $\Ess(w)$ into $\la w^k \ra$-orbits. For each $i \in \{1, \dots, t\}$, let also  $P_i =  \Pc(\{r_m \; | \; m \in M_i  \})$. 

Then for all $i \in \{1, \dots, t\}$, the group $P_i$ is an irreducible direct component of $\Pc(w)$. In particular, for all $j \neq i$, we have either $P_i = P_j$ or  $\Pc(P_i \cup P_j) \cong P_i \times P_j$. More precisely, one of the following assertions holds.
\begin{enumerate}[(i)]
\item  $P_i = P_j$ and each $m \in M_i$ meets finitely many walls in $M_j$. 

\item $P_i = P_j$ is irreducible affine. 

\item $\Pc(P_i \cup P_j) \cong P_i \times P_j$.
\end{enumerate}

\end{prop}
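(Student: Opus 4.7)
My plan is to prove the proposition in three main steps, combining the irreducibility criterion of Lemma~\ref{lem:PC}, the structural description of $\Pc^\infty(w)$ in Lemma~\ref{lemme Krammer}, and the Grid Lemma (Lemma~\ref{lemme grid lemma}).

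First, I would establish that each $P_i$ is irreducible and non-spherical. Fix $i$ and a wall $m_0^i\in M_i$. The orbit $M_i=\{w^{kn}m_0^i:n\in\ZZ\}$ consists of the translates of $m_0^i$ under the hyperbolic isometry $w^k$, and orienting half-spaces consistently with the $w^k$-translation along $\lambda$ turns $M_i$ into a bi-infinite nested chain of half-spaces. Consequently any two distinct walls in $M_i$ are parallel (disjoint), and since commuting reflections have intersecting walls, no two distinct elements of $T_i:=\{r_m:m\in M_i\}$ commute. Hence $T_i$ admits no non-trivial partition into mutually commuting subsets, and Lemma~\ref{lem:PC}(ii)--(iii) gives that $P_i=\Pc(T_i)$ is of irreducible type. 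Since $P_i$ contains the infinite dihedral subgroup generated by any two distinct elements of $T_i$, it is non-spherical.

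Second, I would perform the pairwise case analysis for $i\neq j$ by applying the Grid Lemma to long nested sub-chains $A\subseteq M_i$ and $B\subseteq M_j$. By $\langle w^k\rangle$-equivariance, the incidence $w^{ka}m_0^i\cap w^{kb}m_0^j\neq\varnothing$ depends only on $a-b$, and three scenarios arise. If long chains $A,B$ can be chosen with every wall of $A$ meeting every wall of $B$ and the Grid Lemma returns its case~(i), then $\Pc(A)=\Pc(B)$ is irreducible affine; propagating to longer sub-chains and using that any non-spherical parabolic of an irreducible affine Coxeter group is the full group yields $P_i=P_j$ irreducible affine, namely case~(ii) of the proposition. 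If instead the Grid Lemma returns its case~(ii), then $\Pc(A'\cup B)\cong\Pc(A')\times\Pc(B)$; letting the chains grow, $T_i$ and $T_j$ pointwise commute, so $P_i$ and $P_j$ centralize each other, and Lemma~\ref{lem:PC}(iv) gives $\Pc(P_i\cup P_j)\cong P_i\times P_j$, case~(iii). If no such chains can be chosen, the shift invariance forces each wall of $M_i$ to meet only finitely many walls of $M_j$; I would then apply Lemma~\ref{lem:8walls} to an $8$-wall nested chain in $M_i$ to show that each reflection $r_{m'}$ with $m'\in M_j$ either centralizes a large irreducible subgroup of $P_i$ or forms a Euclidean triangle group with it, and combine this with Step~1 to conclude $P_i=P_j$, yielding case~(i).

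Finally, the distinct values among $\{P_1,\ldots,P_t\}$ are pairwise centralizing (by the pairwise trichotomy), so $\Pc(P_1\cup\cdots\cup P_t)$ is their direct product. By Lemma~\ref{lemme Krammer}(ii)--(iii) this parabolic closure equals $\Pc^\infty(w)$, which is itself the direct product of the irreducible non-spherical components of $\Pc(w)$. Since such a decomposition is unique, each distinct $P_i$ coincides with one of those components, so every $P_i$ is an irreducible direct component of $\Pc(w)$. The main obstacle I anticipate is the second step, particularly the same-component situation: promoting the Grid Lemma's finite-chain output to a statement about the full $P_i$ and $P_j$ requires a careful stability argument (Grid-Lemma case~(i) must stay in case~(i) as the chains grow, and likewise for case~(ii)), and the finite-intersection sub-case is especially delicate because the Grid Lemma is not directly applicable and the conclusion $P_i=P_j$ must be extracted via Lemma~\ref{lem:8walls} combined with the irreducibility established in Step~1.
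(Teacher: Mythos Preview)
Your Step~1 is fine and matches the paper's opening observation. The real divergence --- and the gap --- lies in how you reach the conclusion that each $P_i$ is a direct component of $\Pc(w)$. You postpone this to Step~3, making it a \emph{consequence} of the pairwise trichotomy established in Step~2. But your Step~2 argument for the finite-intersection sub-case does not work: Lemma~\ref{lem:8walls} requires the wall $m'$ to meet \emph{every} wall in the $8$-chain, whereas your hypothesis is precisely that each $m'\in M_j$ meets only finitely many walls of $M_i$; so for any sufficiently long chain in $M_i$ the wall $m'$ will miss most of it and the lemma says nothing. You correctly flag this sub-case as delicate, but the tool you reach for has the wrong hypothesis, and I do not see an incidence-geometric argument that directly yields $P_i=P_j$ here without already knowing the component structure.

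The paper avoids this entirely by reversing the order of the argument. Since $M_i$ is $\la w^k\ra$-invariant, $w^k$ normalizes $P_i$; because $P_i$ is irreducible non-spherical, Lemma~\ref{lemme Deodhar} gives $\mathscr N_W(P_i)=P_i\times\mathscr Z_W(P_i)$, which is parabolic and therefore contains $\Pc(w^k)$. Combined with $P_i\le\Pc(w^k)=\Pc^\infty(w)$ from Lemma~\ref{lemme Krammer}, this forces $P_i$ to be an irreducible direct component of $\Pc(w)$ \emph{before} any pairwise comparison is made. Once that is in hand, the dichotomy ``$P_i=P_j$ or $\Pc(P_i\cup P_j)\cong P_i\times P_j$'' is automatic, and the only remaining work is to distinguish (i) from (ii) within the case $P_i=P_j$ --- which is exactly the situation where the Grid Lemma applies cleanly (if some wall of $M_i$ meets infinitely many walls of $M_j$, shift-invariance forces a full grid). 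You are missing this normalizer argument; adding it would let you delete the problematic finite-intersection analysis altogether.
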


\begin{proof}
Let $i \in \{1, \dots, t\}$. Since $M_i$ is $\la w^k \ra$-invariant, it follows that $P_i$ is normalized by $w^k$. As $\la r_m \; | \; m \in M_i  \ra$ is an irreducible reflection group by Lemma~\ref{lem:PC}(ii), $P_i$ is of irreducible non-spherical type by Lemma~\ref{lem:PC}(iii). It then follows from Lemma~\ref{lemme Deodhar} that $\mathscr N(P_i) = P_i \times \mathscr Z(P_i)$ is itself a parabolic subgroup. In particular it contains $\Pc(w^k)$. Since on the other hand we have $P_i \leq \Pc(w^k)$ by Lemma~\ref{lemme Krammer}, we infer that $P_i$ is a direct component  of $\Pc(w^k)$. Since $\Pc(w^k) = \Pc^\infty(w)$ is the essential component of $\Pc(w)$ by Lemma~\ref{lemme Krammer}, we deduce that $P_i$ is a direct component of $\Pc(w)$ as desired.

\medskip
Let now $j \neq i$. Since we already know that $P_i$ and $P_j $ are irreducible direct components of $\Pc(w)$, it follows that either $P_i = P_j$ or (iii) holds. 
So assume that $P_i=P_j$ and that there exists a wall $m \in M_i$ meeting infinitely many walls in $M_j$. We have to show that (ii) holds. 

Let $\lambda$ be a $w$-axis. By Lemma~\ref{lem:essential=transverse}, all walls in $M_i \cup M_j$ are transverse to $\lambda$. Moreover, by Lemma~\ref{lemme DJ} the elements of $M_i$ (resp. $M_j$) are pairwise parallel. Therefore, we deduce that infinitely many walls in $M_i$ meet infinitely many walls in $M_j$. Since $M_i$ and $M_j$ are both  $\la w^k \ra$-invariant, it follows that all walls in $M_i$ meet all walls in $M_j$. Thus $M_i \cup M_j$ forms a grid and the desired conclusion follows from Lemma~\ref{lemme grid lemma}.
\end{proof}

We shall now deduce a rather subtle, but nevertheless important, difference between the affine and non-affine cases concerning the $\la w \ra$-orbit of a $w$-essential root $\alpha$.

Let us start by considering a specific example, namely  the Coxeter group $W = \la r_a, r_b, r_c \ra$ of type $\tilde A_2$, acting on the Euclidean plane. One verifies easily that $W$ contains a nonzero translation $t$ which preserves the $r_a$-invariant wall $m_a$. Let $w = t r_a$. Then $w$ is of infinite order so that $\Pc(w) = W$. Moreover the walls $m_b$ and $m_c$, respectively fixed by $r_b$ and $r_c$, are both $w$-essential by Lemma~\ref{lem:essential=transverse}. Now we observe that, for each even integer $n$ the walls $m_b$ and 
$w^nm_b$ are parallel, while for each odd integer the walls $m_b$ and 
$w^nm_b$ have a non-empty intersection.

The following result (in the special case $m = m'$) shows that the situation we have just described cannot occur in the non-affine case. 

\begin{prop}\label{prop:affVSnonaff}
Let $w \in W$, $m$ be a $w$-essential wall and $P$ be the irreducible component of $\Pc(w)$ that contains $r_m$. 

If $P$ is not of affine type, then for each $w$-essential wall $m'$ such that $r_{m'} \in P$, there exists an $l_0\in\NN$  such that for all $l\in\ZZ$ with $|l| \geq l_0$, the wall $m'$ lies between $w^{-l}m$ and $w^lm$. 
\end{prop}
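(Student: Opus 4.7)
The plan is to reduce the proposition to the claim that the set
$$
S := \{ l \in \ZZ \mid w^l m \cap m' \neq \varnothing \}
$$
is finite, from which the conclusion follows easily from the translation action of $w$ on $\lambda$.

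I would first observe that, since $w \in \Pc(w)$, conjugation by any power of $w$ is an \emph{inner} automorphism of $\Pc(w)$ and therefore preserves each of its irreducible direct components. In particular, the reflection $r_{w^l m} = w^l r_m w^{-l}$ still lies in $P$ for every $l \in \ZZ$. Now fix $k > 0$ such that $w^k \in W_0$, partition $\Ess(w) = \Ess(w^k)$ into its $\la w^k \ra$-orbits $M_1, \ldots, M_t$ as in Proposition~\ref{prop:Orbits}, and let $P_1, \ldots, P_t$ be the associated irreducible components of $\Pc(w)$. The previous observation, combined with the hypothesis $r_{m'} \in P$, ensures that both $m'$ and every $w^l m$ lie in orbits whose associated component equals $P$.

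To show the finiteness of $S$, I would write each $l \in \ZZ$ as $l = kn + r$ with $0 \le r < k$ and $n \in \ZZ$, and denote by $M_{a_r}$ and $M_b$ the $\la w^k \ra$-orbits of $w^r m$ and $m'$ respectively; the above gives $P_{a_r} = P_b = P$. Since $w^l m \cap m' \neq \varnothing$ if and only if $w^r m \cap w^{-kn} m' \neq \varnothing$, and since $w^{-kn} m'$ ranges over $M_b$ as $n$ varies, it suffices to prove that $w^r m$ meets only finitely many walls in $M_b$ for each of the $k$ residues $r$. If $a_r = b$, then Lemma~\ref{lemme DJ} applied to $w^k \in W_0$ forces two distinct walls in $M_b$ to be disjoint, so $w^r m$ meets only itself in $M_b$. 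If $a_r \neq b$, the three alternatives of Proposition~\ref{prop:Orbits} apply to the pair $(a_r, b)$: case~(iii) is excluded because $P_{a_r} = P_b$, and case~(ii) is excluded by the non-affine hypothesis on $P$, leaving case~(i), which delivers exactly the desired finiteness. This step is the main obstacle; the non-affine hypothesis on $P$ is used here precisely to rule out case~(ii), which otherwise describes the pathology of the $\tilde A_2$ example preceding the statement.

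For the last step, I would denote by $\tau > 0$ the translation length of $w$ along $\lambda$ and set $p := m \cap \lambda$, $p' := m' \cap \lambda$ (both single points by Lemma~\ref{lem:essential=transverse}). Pick $l_0$ strictly larger than both $\max\{|l| \mid l \in S\}$ and $\dist(p, p')/\tau$. For any $l \in \ZZ$ with $|l| \geq l_0$, both $l$ and $-l$ lie outside $S$, so $m'$ is parallel to each of $w^{\pm l} m$; moreover $w^l p$ and $w^{-l} p$ lie on opposite rays of $\lambda \setminus \{p'\}$. Each of $w^{\pm l} m$ therefore lies in the half-space of $m'$ containing its intersection with $\lambda$, and these two half-spaces are opposite; hence $m'$ separates $w^{-l} m$ from $w^l m$, which is exactly the required ``between'' condition.
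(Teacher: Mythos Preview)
Your proof is correct and follows essentially the same approach as the paper: both reduce to showing that the set $\{l \in \ZZ \mid w^l m \cap m' \neq \varnothing\}$ is finite by splitting the $\langle w\rangle$-orbit of $m$ into $\langle w^k\rangle$-orbits and invoking Proposition~\ref{prop:Orbits}, with the non-affine hypothesis on $P$ eliminating case~(ii) and the equality $P_{a_r}=P_b$ eliminating case~(iii). You are more explicit than the paper in two places---the case distinction $a_r=b$ versus $a_r\neq b$, and the final geometric step deriving the ``between'' conclusion from the finiteness of $S$---both of which the paper leaves implicit.
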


\begin{proof}
First notice that if $m$ is a $w$-essential wall, then the reflection $r_m$ belongs to $\Pc(w)$ by Lemma~\ref{lemme Krammer}, so that $P$ is well defined. Moreover, we have $r_{w^lm} = w^l r_m w^{-l} \in P$ for all $l \in \ZZ$. 

Let $k>0$ be such that $w^k \in W_0$ and let $\Ess(w) = \Ess(w^k) = M_1 \cup \dots \cup M_t$ be the partition of $\Ess(w)$ into $\la w^k \ra$-orbits. Upon reordering the $M_i$, we may assume that $m' \in M_1$. Let also $I \subseteq \{1, \dots, t\}$ be the set of those $i$ such that $w^lm \in M_i$ for some $l$. In other words the $\la w\ra $-orbit of $m$ coincides with $\bigcup_{i \in I} M_i$. 

 For all $j$, set $P_j =  \Pc(\{r_{\mu} \; | \; \mu \in M_j  \})$. By Proposition~\ref{prop:Orbits}, each $P_j$ is an irreducible direct component of $\Pc(w)$. By hypothesis, this implies that  $P = P_1 =P_i$  for all $i \in I$.

Suppose now that for infinitely many values of $l$, the wall $w^l m$ has a non-empty intersection with $m'$. We have to deduce that $P$ is of affine type. 

Recall from Lemma~\ref{lemme DJ} that the elements of $M_j$ are pairwise parallel for all $j$. Therefore, our assumption  implies that for some $i \in I$, the wall $m'$ meets infinitely many walls in $M_i$. By Proposition~\ref{prop:Orbits}, this implies that either $P = P_1  = P_i$ is of affine type, or $\Pc(P_1 \cup P_i) \cong P_1 \times P_i$.  The second case is impossible since $P_1 = P_i$. 
\end{proof}

\subsection{On parabolic closures of a pair of reflections}

The following consequence of Proposition~\ref{prop:Orbits} was stated as Theorem~\ref{thm:Cox1} in the introduction. 

\begin{cor}\label{cor:Pc:2refl}
For each $w \in W$ with infinite irreducible parabolic closure $\Pc(w)$,  there is a constant $C$ such that the following holds. For all $m, m' \in \Ess(w)$ with 
$d(m, m')>C$, we have $\Pc(w) =   \Pc(r_{m}, r_{m'}) $.
\end{cor}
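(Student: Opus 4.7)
The plan is to reduce to $w \in W_0$, apply Proposition~\ref{prop:Orbits}, and then split by whether $\Pc(w)$ is of affine type. Replacing $w$ by a positive power places it in $W_0$ without affecting the statement: $\Ess(w)$ is unchanged by Lemma~\ref{lem:essential=transverse}, and $\Pc(w) = \Pc^{\infty}(w) = \Pc(w^k)$ by Lemma~\ref{lemme Krammer}(ii), since $\Pc(w)$ is infinite and irreducible, hence non-spherical. Proposition~\ref{prop:Orbits} then yields the $\la w\ra$-orbit decomposition $\Ess(w) = M_1 \cup \dots \cup M_t$ with $P_i = \Pc(w)$ for every $i$ (case (iii) of that proposition is excluded by irreducibility), and for $i \neq j$ only cases (i) or (ii) can occur.

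For $m \neq m' \in \Ess(w)$ with $d(m, m') > 0$, the walls $m, m'$ are parallel, so $\la r_m, r_{m'}\ra$ is infinite dihedral and $Q := \Pc(r_m, r_{m'})$ is an irreducible non-spherical parabolic subgroup of $\Pc(w)$ by Lemma~\ref{lem:PC}(ii)--(iii). The aim is to choose $C$ forcing $Q = \Pc(w)$ whenever $d(m, m') > C$. If $\Pc(w)$ is irreducible affine, every proper parabolic subgroup of $\Pc(w)$ is spherical (every proper connected subdiagram of an affine Coxeter diagram is spherical), so the non-sphericity of $Q$ yields $Q = \Pc(w)$ automatically; here $C = 0$ suffices.

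If $\Pc(w)$ is irreducible non-affine, the plan is to exploit a bounded-geometry principle. Any proper non-spherical parabolic subgroup $Q' \subsetneq \Pc(w)$ stabilizes a proper convex residue $Y_{Q'}$ in the Davis complex of $\Pc(w)$, and $w \notin Q'$ (else $\Pc(w) \leq Q'$, contradicting properness). If $\lambda \subset Y_{Q'}$, then $Y_{Q'}$ and $w \cdot Y_{Q'}$ would be residues of the same type sharing all of $\lambda$, hence would coincide, forcing $w$ to stabilize $Y_{Q'}$ and so $w \in Q'$---a contradiction. Therefore $\lambda \cap Y_{Q'}$ is a bounded segment. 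Any two $w$-essential walls $\mu, \mu'$ with $r_\mu, r_{\mu'} \in Q'$ cross $\lambda$ inside $Y_{Q'}$, so $d(\mu, \mu') \leq \mathrm{length}(\lambda \cap Y_{Q'})$. Setting $C$ equal to the supremum of these lengths across a system of representatives for the $\Pc(w)$-conjugacy classes of proper non-spherical parabolic subgroups then completes the argument, since $d(m, m') > C$ would force $Q$ not to be any such proper parabolic, leaving only $Q = \Pc(w)$.

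The main obstacle is uniformizing $\mathrm{length}(\lambda \cap Y_{Q'})$ across the (possibly infinite) $\Pc(w)$-conjugacy class of each proper $Q'$: the $\la w\ra$-action on the conjugacy class preserves these lengths along each $\la w\ra$-orbit, so one reduces to a compactness or fundamental-domain argument in $\la w\ra \backslash \Pc(w) / \mathscr N_{\Pc(w)}(W_I)$. Here one should use the finiteness of the standard parabolic subgroups of $\Pc(w)$, Lemma~\ref{lemme Deodhar} to pin down the normalizer, and Lemma~\ref{lemme DJ} (via $w \in W_0$) to control how the $w$-translates interact with the residue. This is where the bulk of the technical care lies.
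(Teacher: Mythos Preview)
Your reduction to $w \in W_0$ and your treatment of the affine case are correct. The non-affine case, however, has two real gaps. First, the assertion that a $w$-essential wall $\mu$ with $r_\mu \in Q'$ crosses $\lambda$ \emph{inside} $Y_{Q'}$ is unjustified: from $r_\mu \in Q'$ one only gets $\mu \cap Y_{Q'} \neq \varnothing$, while the single point $\mu \cap \lambda$ has no reason to lie in $Y_{Q'}$ (indeed $\lambda$ need not meet $Y_{Q'}$ at all). Hence the inequality $d(\mu,\mu') \leq \mathrm{length}(\lambda \cap Y_{Q'})$ does not follow. Second, the uniformisation you postpone to the final paragraph is not a technicality but the crux of the matter: the double coset space $\la w\ra \backslash \Pc(w) / \mathscr N_{\Pc(w)}(W_I)$ is typically infinite, and no compactness or fundamental-domain argument is available here in general. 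As written, the non-affine case is therefore incomplete.

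The paper's route is quite different and bypasses both problems. Fixing an orbit $M_i \subset \Ess(w)$ and writing $m_n = w^{kn} m$ for some $m \in M_i$, one observes via Lemma~\ref{lemme 3 paralleles} that the parabolic closures $Q_n = \Pc(r_{m_n}, r_{m_{-n}})$ form an ascending chain inside $P = \Pc(w)$ whose union contains every $r_{m_n}$ and hence equals $P_i = P$. The chain condition on parabolic subgroups (only finitely many types occur) then forces $Q_n = P$ for some finite $n$. The constant $C$ is extracted from these finitely many values of $n$ (one per orbit), and Lemma~\ref{lemme 3 paralleles} is used once more to trap a suitable translate of the pair $\{m_{-n}, m_n\}$ between any two sufficiently distant $w$-essential walls. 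No enumeration of proper parabolic subgroups is required.
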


We shall use the following.

\begin{lemma}\label{lemme 3 paralleles}
Let $\alpha,\beta,\gamma\in\Phi$ such that $\alpha\subsetneq\beta\subsetneq\gamma$. Then $r_{\beta}\in\Pc(\{r_{\alpha},r_{\gamma}\})$. 
\end{lemma}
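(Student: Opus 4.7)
The plan is to feed the hyperbolic element $w=r_\alpha r_\gamma$ into Krammer's description (Lemma~\ref{lemme Krammer}) of the parabolic closure and show that $\beta$ is $w$-essential, which will give $r_\beta\in \Pc^\infty(w)\subseteq \Pc(w)\subseteq \Pc(\{r_\alpha,r_\gamma\})$, the last inclusion coming from $w=r_\alpha r_\gamma\in \Pc(\{r_\alpha,r_\gamma\})$. Since $\alpha\subsetneq \gamma$, the walls $\partial\alpha$ and $\partial\gamma$ are disjoint, so $\langle r_\alpha,r_\gamma\rangle$ is infinite dihedral and $w$ acts as a hyperbolic isometry of $X$; fix any translation axis $\lambda$.

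First I would verify that $\alpha$ and $\gamma$ are $w$-essential. From $\alpha\subseteq\gamma$ one obtains $r_\gamma\alpha\subseteq r_\gamma\gamma=-\gamma$, and from the strictness $\alpha\subsetneq\gamma$ one has $-\gamma\subsetneq -\alpha$; hence $w\alpha=r_\alpha(r_\gamma\alpha)\subsetneq r_\alpha(-\alpha)=\alpha$, and an analogous computation yields $w\gamma=r_\alpha(-\gamma)\subsetneq r_\alpha(-\alpha)=\alpha\subsetneq\gamma$. By Lemma~\ref{lem:essential=transverse}, both $\partial\alpha$ and $\partial\gamma$ are then transverse to $\lambda$, so we may pick a point $x\in\lambda$ strictly on the $\alpha$-side of $\partial\alpha$ and a point $y\in\lambda$ strictly on the $(-\gamma)$-side of $\partial\gamma$.

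To finish, I exploit the middle inclusion $\alpha\subsetneq\beta\subsetneq\gamma$: from $\alpha\subseteq\beta$ it follows that $x\in\beta$, while $-\gamma\subseteq -\beta$ gives $y\in-\beta$, so the subsegment $[x,y]\subset\lambda$ meets $\partial\beta$; being the intersection of a wall with a geodesic not contained in it, this meeting is a single point by \cite[Lemma~3.4]{MR1885045}. Thus $\partial\beta$ is transverse to $\lambda$, so a second application of Lemma~\ref{lem:essential=transverse} identifies $\beta$ as $w$-essential, and the chain of inclusions announced at the start yields $r_\beta\in\Pc(\{r_\alpha,r_\gamma\})$. No substantive obstacle is anticipated: the whole argument is a separation observation in the Davis complex, resting only on facts already established in this section.
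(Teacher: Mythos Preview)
Your argument is correct. The paper does not actually prove this lemma but simply cites \cite[Lemma~17]{MR2263057}, so your route is genuinely different: you give a self-contained proof using only results already established in this section (Lemmas~\ref{lem:essential=transverse} and~\ref{lemme Krammer}). The chain $r_\beta\in\Pc^\infty(w)\subseteq\Pc(w)\subseteq\Pc(\{r_\alpha,r_\gamma\})$ is exactly right, and your verification that $\partial\beta$ is transverse to the $w$-axis is sound since the three walls are pairwise parallel with $\partial\beta$ separating $\partial\alpha$ from $\partial\gamma$.

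One small streamlining is available: the detour through the axis is unnecessary, because the same inclusion-chasing you used for $\alpha$ and $\gamma$ works directly for $\beta$. From $\beta\subsetneq\gamma$ one gets $r_\gamma\beta\subseteq-\gamma\subsetneq-\alpha$, hence $w\beta=r_\alpha(r_\gamma\beta)\subsetneq r_\alpha(-\alpha)=\alpha\subsetneq\beta$, so $\beta$ is $w$-essential with $n=1$. This bypasses both applications of Lemma~\ref{lem:essential=transverse} and the separation argument, though of course your geometric version is equally valid.
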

\begin{proof}
See \cite[Lemma 17]{MR2263057}.
\end{proof}

\begin{proof}[Proof of Corollary~\ref{cor:Pc:2refl}]
Retain the notation of Proposition~\ref{prop:Orbits}. Since $P = \Pc(w)$ is irreducible, we have $P = P_i$ for all $i \in \{1, \dots, t\}$ by Proposition~\ref{prop:Orbits}. Recall that $M_i$ is the $\la w^k\ra$-orbit of some $w$-essential wall $m$. For all $n \in \ZZ$, we set $m_n = w^{kn}m$. By Lemma~\ref{lemme DJ} the elements of $M_i$ are pairwise parallel and hence for all $i<j<n$, it follows that $m_j$ separates $m_i$ from $m_n$. For all $n\geq 0$ let now $Q_n = \Pc(\{r_{m_n}, r_{m_{-n}}\})$. By  Lemma~\ref{lemme 3 paralleles} we have $Q_n \leq Q_{n+1} \leq P$ for all $n \geq 0$. In particular $\bigcup_{n \geq 0} Q_n$ is a parabolic subgroup, which must thus coincide with $P$. It follows that $Q_n = P$ for some $n$. Since this argument holds for all $i \in \{1, \dots, t\}$, the desired result follows.
\end{proof}

\begin{cor}
Any irreducible non-spherical parabolic subgroup $P$ is the parabolic closure of a pair of reflections.
\end{cor}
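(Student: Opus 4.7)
My plan is to reduce the statement to Corollary~\ref{cor:Pc:2refl}. It suffices to exhibit an element $w\in P$ of infinite order with $\Pc(w)=P$: given such $w$, the hypotheses of Corollary~\ref{cor:Pc:2refl} are satisfied, since $\Pc(w)=P$ is infinite (because $P$ is non-spherical) and irreducible by assumption. The corollary then produces a constant $C$ and two $w$-essential walls $m,m'$ at distance greater than $C$ with $\Pc(r_m,r_{m'})=\Pc(w)=P$. Infinitely many such pairs exist, because by Lemma~\ref{lem:essential=transverse} the essential walls for $w$ are precisely the walls transverse to a $w$-axis $\lambda$, and the hyperbolic isometry $w$ of the Davis complex crosses infinitely many of these.

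To produce $w$, I would conjugate and reduce to the case $P=W_J$ with $J\subseteq S$ irreducible and non-spherical. I would then take a Coxeter element $w=s_1s_2\cdots s_r$, where $\{s_1,\dots,s_r\}=J$ in any fixed ordering. Since $J$ is non-spherical, $w$ has infinite order by the classical theorem identifying finite Coxeter groups with those whose Coxeter element has finite order. It remains to show $\Pc(w)=W_J$, which I would argue by showing $w$ lies in no proper parabolic subgroup of $W_J$. I would separate two cases. In the affine case, every proper standard parabolic of an irreducible affine Coxeter group is spherical, hence finite (removing any node of an affine Dynkin diagram yields a spherical Dynkin diagram); as $w$ has infinite order, it cannot be contained in any finite (hence any proper) parabolic subgroup, forcing $\Pc(w)=W_J$. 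In the non-affine case, I would use the Tits geometric representation $\rho\co W\to\operatorname{GL}(V)$: any conjugate $uW_Ku^{-1}$ with $K\subsetneq J$ fixes the non-zero vector $\rho(u)\cdot v$, where $v$ is any non-zero vector in the $B$-orthogonal complement of $\operatorname{span}\{e_s : s\in K\}$ (with $B$ the Tits form), whereas for irreducible non-spherical non-affine $W_J$ the Coxeter element is classically known to have no non-zero fixed vector in $V$. This contradiction gives $\Pc(w)=W_J=P$.

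The hardest step, I expect, is this final assertion in the non-affine case: that the Coxeter element of an irreducible non-spherical non-affine Coxeter group fixes no non-zero vector in the geometric representation. While classical, it is not purely formal and requires spectral analysis of the Coxeter transformation (for instance, via Perron--Frobenius applied to the indefinite Tits form). Granting this, Corollary~\ref{cor:Pc:2refl} closes the argument by producing the required pair of reflections.
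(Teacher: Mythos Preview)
Your strategy is correct and coincides with the paper's: exhibit an element $w \in P$ of infinite order with $\Pc(w) = P$, and then invoke Corollary~\ref{cor:Pc:2refl}. The paper does exactly this, obtaining such a $w$ by quoting \cite[Cor.~4.3]{MR2585575} (and noting, parenthetically, that it also follows from the paper's own Corollary~\ref{cor thm Coxeter} combined with \cite[Prop.~2.43]{ABrown}). You instead take $w$ to be a Coxeter element of $W_J$, which is a legitimate and more explicit choice: it is indeed a known fact that a Coxeter element of an infinite irreducible Coxeter group lies in no proper parabolic subgroup.

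That said, your proposed justification in the non-affine case deserves a caution. Your fixed-vector argument amounts to showing that $1$ is not an eigenvalue of the Coxeter element $c$ in the geometric representation, which---via the classical identity $\det(I - c) = \det(2B)$ (obtained by writing each reflection as $I - e_i \otimes 2B(e_i,\cdot)$ and performing an LU-type reduction)---is equivalent to non-degeneracy of the Tits form $B$. The Perron--Frobenius reasoning you allude to readily shows that if the spectral radius of the irreducible non-negative matrix $2I - 2B$ equals $2$, then $B$ is positive semidefinite and the system is affine; but one must \emph{also} rule out $2$ occurring as a sub-dominant eigenvalue, and this is a genuinely separate step that your sketch does not address. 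The conclusion you want is true, but the route you outline is more delicate than a single appeal to Perron--Frobenius suggests. The paper's approach of simply citing a reference for the existence of $w$ sidesteps this issue entirely, at the cost of being less self-contained.
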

\begin{proof}
Let $w\in P$ such that $P=\Pc(w)$. Such an $w$ always exists by \cite[Cor.4.3]{MR2585575}. (Note that this can also be deduced from Corollary~\ref{cor thm Coxeter} below together with \cite[Prop.2.43]{ABrown}.) The conclusion now follows from Corollary~\ref{cor:Pc:2refl}.
\end{proof}

\subsection{The parabolic closure of a product of two elements in a Coxeter group}

We are now able to present the main result of this section, which was stated as Theorem~\ref{thm:Cox2} in the introduction.

Before we state it, we prove one more technical lemma about $\CAT$ spaces. Recall that $W$ acts on the $\CAT$ space $X$. For a hyperbolic $w\in W$, let $|w|$ denote its translation length and set $\Min(w)=\{x\in X \ | \ \dist(x,wx)=|w|\}$.

\begin{lemma}\label{lemme cat0}
Let $w\in W$ be hyperbolic and suppose it decomposes as a product $w=w_1w_2\dots w_t$ of pairwise commuting hyperbolic elements of $W$. Let $m$ be a $w$-essential wall. Then $m$ is also $w_i$-essential for some $i\in\{1,\dots,t\}$.
\end{lemma}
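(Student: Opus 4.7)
My plan is to combine the characterization of essentiality from Lemma~\ref{lem:essential=transverse} with standard $\CAT$ geometry of commuting semisimple isometries. The first step is to invoke Bridson--Haefliger's theory of commuting semisimple isometries (essentially the Flat Torus Theorem): since the $w_i$ are pairwise commuting hyperbolic (hence semisimple) isometries of the complete $\CAT$ space $X$, the common minimum set $Y := \bigcap_{i=1}^{t} \Min(w_i)$ is non-empty, closed, and convex, and each $w_i$ acts on $Y$ as a Clifford translation by the distance $|w_i|$. Their product $w$ then acts on $Y$ as a Clifford translation as well, with $Y \subseteq \Min(w)$; so, picking any $x \in Y$, the forward $w$-orbit of $x$ lies on a geodesic line $\lambda \subseteq Y$ which is a genuine $w$-axis in $X$.

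Since $m \in \Ess(w)$, Lemma~\ref{lem:essential=transverse} gives that $m$ meets $\lambda$ in exactly one point $p$. Translating $x$ along $\lambda$ if necessary (this keeps $x$ inside $Y$), I may further assume that $p$ lies strictly in the interior of $[x,wx]$, so that $x$ and $wx$ sit in opposite open half-spaces $\alpha^\circ$ and $(-\alpha)^\circ$ bounded by $m$.

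Next, I walk along the broken geodesic path $x_0:=x$, $x_i:=(w_1\cdots w_i)x$ for $1\leq i\leq t$, noting that $x_t=wx$. Pairwise commutativity of the $w_j$ yields the key identity
$$x_i=(w_1\cdots w_{i-1})w_i\,x=w_i(w_1\cdots w_{i-1})x=w_ix_{i-1},$$
and since $Y$ is preserved by each $w_j$, every $x_{i-1}$ lies in $Y\subseteq \Min(w_i)$. Hence both endpoints of $[x_{i-1},x_i]=[x_{i-1},w_ix_{i-1}]$ lie on a common $w_i$-axis $\lambda_i$, which by uniqueness of $\CAT$ geodesics contains the whole segment. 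Because $x_0\in\alpha^\circ$ while $x_t\in(-\alpha)^\circ$, there is a first index $i$ with $x_i\notin\alpha^\circ$; then $x_{i-1}\in\alpha^\circ$, so in particular $x_{i-1}\notin m$, while $x_i\in m\cup(-\alpha)^\circ$. In either case $\lambda_i$ meets $m$ (along $[x_{i-1},x_i]$) but is not contained in $m$ (since $x_{i-1}\notin m$), so $\lambda_i$ is transverse to $m$. Applying Lemma~\ref{lem:essential=transverse} to $w_i$ and the axis $\lambda_i$ then shows that $m$ is $w_i$-essential, as required.

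The main technical obstacle is the opening step, which relies on the $\CAT$ theory of commuting semisimple isometries (both for non-emptiness of $Y$ and for the fact that $w$ is a Clifford translation on $Y$ realizing $|w|$ there). Once these standard facts are granted, the rest is an elementary walk along a broken geodesic, with commutativity invoked on a single line to recast a length-$t$ product as a one-step $w_i$-translation.
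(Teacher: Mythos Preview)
Your proof is correct and follows essentially the same strategy as the paper's own argument: locate a point $x$ in the common min-set $\bigcap_i \Min(w_i)$ lying on a $w$-axis, walk along the broken geodesic $x, w_1x, w_1w_2x,\dots, wx$ whose segments sit on $w_i$-axes, and observe that the wall $m$ must cross one of these segments, whence Lemma~\ref{lem:essential=transverse} finishes. The only cosmetic difference is that you justify $Y\subseteq\Min(w)$ via the Clifford-translation description (which is fine, though slightly more than needed: the paper simply uses that $w$ stabilises the nonempty closed convex set $M=\bigcap_i\Min(w_i)$, so $M\cap\Min(w)\neq\varnothing$), and you are more explicit about sliding $x$ along the axis so that $m$ strictly separates $x$ from $wx$.
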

\begin{proof}
Write $w_0:=w$. Then, since the $w_i$ are pairwise commuting for $i=0,\dots,t$, each $w_i$ stabilizes $\Min(w_j)$ for all $j$. Thus $M:=\bigcap_{j=1}^t{\Min(w_j)}$ and $\Min(w)$ are both non-empty by $\CAT$-convexity, and are stabilized by each $w_i$, $i=0,\dots,t$. Therefore, if $x\in M\cap\Min(g)$, there is a piecewise geodesic path $x,w_1x,w_1w_2x,\dots,w_1\dots w_tx=wx$ inside $M\cap\Min(g)$, where each geodesic segment is part of a $w_i$-axis for some $i\in\{1,\dots,t\}$. Since any wall intersecting the geodesic segment $[x,wx]$ must intersect one of those axis, the conclusion follows from Lemma~\ref{lem:essential=transverse}.
\end{proof}

\begin{thm}\label{thm thm Coxeter}
For all $g,h\in W_0$, there exists a constant $K=K(g,h)\in\NN$ such that for all $m,n\in\ZZ$ 
with $\min\{|m|,|n|,|m/n|+|n/m|\}\geq K$, 
we have $\Pc(g)\cup\Pc(h)\subseteq\Pc(g^mh^n)$. 
\end{thm}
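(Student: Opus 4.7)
The plan is to reduce the theorem to the single inclusion $\Pc(g)\subseteq\Pc(g^mh^n)$. Since $W_0$ is normal, $w:=g^mh^n$ lies in $W_0$. Once I establish $g\in\Pc(w)$, I get $g^m\in\Pc(w)$ and hence $h^n=g^{-m}w\in\Pc(w)$, so $\Pc(h)=\Pc(h^n)\subseteq\Pc(w)$, where the equality $\Pc(h^n)=\Pc(h)$ comes from $\Ess(h^n)=\Ess(h)$ (Lemma~\ref{lem:essential=transverse}) combined with Lemma~\ref{lemme Krammer}(iii). So it suffices to show $\Pc(g)\subseteq\Pc(w)$.

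By Lemma~\ref{lemme Krammer}(iii), both $\Pc(w)=\langle r_\mu:\mu\in\Ess(w)\rangle$ and $\Pc(g)=\langle r_\mu:\mu\in\Ess(g)\rangle$. That same lemma, together with Lemma~\ref{lem:PC}(ii), guarantees that every irreducible component of $\Pc(g)$ is non-spherical, so Corollary~\ref{cor:Pc:2refl} applies to each such component and reduces the task to exhibiting, within each irreducible direct factor of $\Pc(g)$, two reflections $r_{\mu_1},r_{\mu_2}$ with $\mu_1,\mu_2\in\Ess(w)\cap\Ess(g)$ whose walls are sufficiently far apart along the relevant axis. By Proposition~\ref{prop:Orbits}, each such factor is the parabolic closure of a single $\langle g\rangle$-orbit of $g$-essential walls, so it is enough to catch, in each orbit, two widely separated representatives that are also $w$-essential.

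The geometric heart of the argument is the construction of a piecewise $w$-trajectory built from segments of $\lambda_g$ and $\lambda_h$. I pick $x_0\in\lambda_h$. For $|n|$ large, the segment $[x_0,h^nx_0]$ lies on $\lambda_h$ and is crossed by many $h$-essential walls from every $\langle h\rangle$-orbit; for $|m|$ large, the segment $[h^nx_0,wx_0]$ tracks an $h^n$-translate of $\lambda_g$ and is crossed by correspondingly many $g$-essential walls from every $\langle g\rangle$-orbit. A wall separating $x_0$ from $wx_0$ becomes transverse to the $w$-axis $\lambda_w$ after a bounded adjustment (replacing $x_0$ by its nearest-point projection onto $\lambda_w$), hence lies in $\Ess(w)$ by Lemma~\ref{lem:essential=transverse}. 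Combining this with the convexity argument underlying Lemma~\ref{lemme cat0} and with the Grid Lemma~\ref{lemme grid lemma} to align orbits on the two sides, one extracts the required pairs in $\Ess(w)$.

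The hard part will be the degenerate regime in which $\Pc(g)$ and $\Pc(h)$ share an irreducible affine direct component on which $g$ and $h$ both translate nontrivially. There the translation parts of $g^m$ and $h^n$ along the Euclidean direction of that factor can cancel, producing a bounded-translation contribution on that component and destroying the corresponding essential walls. The hypothesis $\min\{|m|,|n|,|m/n|+|n/m|\}\geq K$ is tailored precisely to rule out this cancellation: it forces $|m|$ and $|n|$ to be of incomparable orders of magnitude, so that whichever of $g^m,h^n$ dominates on each shared affine factor supplies enough essential walls to recover that factor. The affine-versus-non-affine dichotomy at play here is exactly the one isolated in Proposition~\ref{prop:affVSnonaff}, and controlling it uniformly in terms of the translation lengths and the geometry of the shared affine components is what produces the $g,h$-dependence $K=K(g,h)$.
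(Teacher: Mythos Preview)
Your reduction in the first paragraph is valid and pleasant: once $\Pc(g)\subseteq\Pc(w)$ you indeed get $h^n=g^{-m}w\in\Pc(w)$ and hence $\Pc(h)=\Pc(h^n)\subseteq\Pc(w)$. The paper does not use this shortcut; it treats the $P_i$ and $Q_j$ symmetrically. So that part is fine and even slightly streamlines the target.

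The real problem is the third paragraph. Your key step is: ``a wall separating $x_0$ from $wx_0$ becomes transverse to the $w$-axis $\lambda_w$ after a bounded adjustment.'' This is not justified. A wall $\mu$ separating $x_0$ from $wx_0$ need not be $w$-essential: with $w\in W_0$ one can have $w\alpha\subsetneq -\alpha$ and $w^2\alpha=\alpha$, so $\mu$ is crossed back and forth along the $w$-orbit of $x_0$ and never meets $\lambda_w$. The discrepancy between ``walls separating $x_0$ from $wx_0$'' and ``walls transverse to $\lambda_w$'' is controlled by $\dist(x_0,\lambda_w)$, but that distance depends on $w=g^mh^n$ and is \emph{not} uniformly bounded in $m,n$; as $m,n$ vary, $\lambda_w$ moves. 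There is also a secondary slip: the segment $[h^nx_0,wx_0]=h^n[x_0,g^mx_0]$ lies on $h^n\lambda_g$ only if $x_0\in\lambda_g$, but you chose $x_0\in\lambda_h$, and there is no reason for $\lambda_g\cap\lambda_h\neq\varnothing$. Even granting that, the walls you produce on that segment are $h^n$-translates of walls in $M_i$, so their reflections lie in $h^nP_ih^{-n}$, not in $P_i$; Corollary~\ref{cor:Pc:2refl} then gives you $h^nP_ih^{-n}\subseteq\Pc(w)$ rather than $P_i\subseteq\Pc(w)$.

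The paper avoids all of this by never invoking the $w$-axis. It works directly with the definition: for $w\in W_0$, a root $\alpha$ is $w$-essential iff $w\alpha\subsetneq\alpha$ or $w\alpha\supsetneq\alpha$. The proof is a structured case analysis on the relationship between a component $P_i$ of $\Pc(g)$ and the components $Q_j$ of $\Pc(h)$: if $P_i$ centralises $\Pc(h)$ the walls of $M_i$ are $h$-fixed and stay $g^mh^n$-essential; if $P_i=Q_j$ is affine, the ratio condition on $m,n$ prevents cancellation (your last paragraph has the right intuition here); if $M_i\cap N_j$ is infinite, a direct length comparison along parallel walls gives the needed strict inclusions; and in the remaining generic case, one arranges four pairwise parallel walls $\partial\alpha,\partial\alpha_{s_0},\partial\beta,\partial\beta_{t_0}$ with explicit nesting relations (Figure~\ref{thmCox}) and checks the inclusion $g^mh^n\alpha_\bullet\subsetneq\alpha_\bullet$ by composing the chain $\alpha_\bullet\supsetneq\pm\beta_\bullet\supsetneq h^n(\pm\beta_\bullet)\supsetneq\ldots$ for each sign pattern of $(m,n)$. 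That combinatorial control of root inclusions is the missing idea in your sketch.
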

\begin{proof}

Fix $g,h\in W_0$. Let $\Ess(g) = M_1 \cup \dots \cup M_k$ (resp. $\Ess(h) = N_1 \cup \dots \cup N_l$) be the partition of $\Ess(g)$ into $\la g \ra$-orbits (resp. $\Ess(h)$ into $\la h\ra$-orbits). For all $i \in \{1,\dots,k\}$ and $j \in \{1,\dots,l\}$, set $P_i = \Pc(\{r_m \; | \; m \in M_i\})$ and $Q_j = \Pc(\{r_m \; | \; m \in N_j\})$. 

By Lemma~\ref{lemme Krammer}, we have $\Pc(g) = \la \{r_m \; | \; m \in M_i,  \ i =1,\dots,k\} \ra$, and Proposition~\ref{prop:Orbits} ensures that $P_i$ is an irreducible direct component of $\Pc(g)$ for all $i$. Thus there is a subset $I \subseteq \{1,\dots,k\}$ such that $\Pc(g) = \prod_{i \in I} P_i$. Similarly, there is a subset $J \subseteq \{1,\dots,l\}$ such that $\Pc(h) = \prod_{j \in J} Q_j$. 

For all $i \in I$ and $j \in J$, we finally let $g_i$ and $h_j$ denote the respective projections of $g$ and $h$ onto $P_i$ and $Q_j$, so that $P_i=\Pc(g_i)$ and  $Q_j=\Pc(h_j)$.

We define a collection $\E(g,h)$ of subsets of $W$ as follows: a set $Z\subseteq W$ belongs to 
$\E(g, h)$ if and only if there exists a constant $K=K(g,h,Z)\in\NN$ such that for all $m,n\in\ZZ$ with $\min\{|m|,|n|,|m/n|+|n/m|\}\geq K$ we have $Z\subseteq\Pc(g^mh^n)$. 

\medskip
Our goal is to prove that $\Pc(g)$ and $\Pc(h)$ both belong to $E(g, h)$. 
To this end,   it   suffices to show that $P_i$ and  $Q_j$ belong to $\E(g,h)$ for all $i \in I $ and $j \in J$.  This will be achieved in Claim~\ref{claim 5} below.

\begin{claim}\label{claim 0}
$M_s\subseteq \Ess(g_i)$ for all $s\in\{1,\dots,k\}$ and $i\in I$ such that $P_s=P_i$.
Similarly, $N_s\subseteq\Ess(h_j)$ for all $s\in\{1,\dots,l\}$ and $j\in J$ such that $Q_s=Q_j$.
\end{claim}

Indeed, let $m\in M_s$ for some $s\in\{1,\dots,k\}$. Then $r_m\in P_s=P_i$. Moreover, as $m$ is $g$-essential, it must be $g_{i'}$-essential for some $i'\in I$ by Lemma~\ref{lemme cat0}. But then $r_m\in\Pc(g_{i'})=P_{i'}$ and so $i'=i$. The proof of the second statement is similar.

\begin{claim}\label{claim 1}
If $i \in I$ is such that $[P_i,Q_j]=1$ for all $j\in J$, then $P_i$ belongs to $\E(g,h)$. 

Similarly, if $j \in J$ is such that $[P_i,Q_j]=1$ for  all $i\in I$, then $Q_j$ belongs to $\E(g,h)$.
\end{claim}
Indeed, suppose $[P_i,Q_j]=1$ for some $i\in I$ and for all $j\in J$. Then $P_i$ commutes with $\Pc(h)$. Thus $h$ fixes every wall of $M_i$. In particular, any wall $\mu \in M_i$ is $g^mh^n$-essential for all $m,n\in\ZZ^*$ since $g^mh^n=g_i^mw$ for some $w\in W$ fixing $\mu$ and commuting with $g_i$. Therefore $P_i\subseteq \Pc(g^mh^n)$ for all $m,n\in\ZZ^*$ and so $P_i$ belongs to $\E(g,h)$. The second statement is proven in the same way.

\begin{claim}\label{claim 2}
Let  $i\in I$ and $j\in J$ be such that  $P_i=Q_j$. Then, for all $m,n\in\ZZ$, every $g_i^mh_j^n$-essential root is also $g^mh^n$-essential. 
\end{claim}
Indeed, take $\alpha\in\Phi$ and $k>0$ such that $(g_i^mh_j^n)^k\alpha\subsetneq\alpha$. Notice that $\Pc(g_i^mh_j^n)\subseteq P_i=Q_j$, and hence $r_\alpha \in P_i = Q_j$ by Lemma~\ref{lemme Krammer}. Moreover, setting $g':=\prod_{t\neq i}{g_t^m}$ and $h':=\prod_{t\neq j}{h_t^n}$, we have $g'\alpha=\alpha=h'\alpha$ since $g'$ and $h'$ centralize $P_i=Q_j$. Therefore $(g^mh^n)^k\alpha =  (g_i^mh_j^n)^k(g'h')^k\alpha= (g_i^mh_j^n)^k\alpha\subsetneq \alpha$  so that $\alpha$ is also $g^mh^n$-essential.

\begin{claim}\label{claim 3}
Let  $i\in I$ and $j\in J$ be such that  $P_i=Q_j$. If $P_i$ is of affine type, then $P_i = Q_j$ belongs to $\E(g,h)$.
\end{claim}

Since $P_i = Q_j$ is of irreducible affine type, we have $\Pc(w) = P_i$  for all $w \in P_i$ of infinite order. Thus, in order to prove the claim, it suffices to show that there exists some constant $K$ such that $g_i^m h_j^n$ is of infinite order for all $m, n \in\ZZ$ with $\min\{|m|,|n|,|m/n|+|n/m|\}\geq K$. Indeed, we will then get that $\Pc(g_i^mh_j^n)=P_i$ is of essential type and so $P_i=\Pc(g_i^mh_j^n) \leq \Pc(g^m h^n)$ by Claim~\ref{claim 2} and Lemma~\ref{lemme Krammer}(ii).

 Recalling that $P_i$ is of affine type, we can argue in the geometric realization of a Coxeter complex of affine type, which is a Euclidean space. We deduce that if $g_i$ and $h_j$ have non-parallel translation axes, then $g_i^m h_j^n$ is of infinite order for all nonzero $m, n$. On the other hand,  if $   g_i$ and $h_j$ have some parallel translation axes, we consider a Euclidean hyperplane $H$ orthogonal to these and let $\ell_i$ and $\ell_j$ denote the respective translation lengths of $g_i$ and $h_j$. Then, upon replacing $g_i$ by its inverse (which does not affect the conclusion since $E(g, h) = E(g\inv, h)$), we have $d(g_i^m h_j^n H, H) = | m \ell_i - n \ell_j |$. Since $g_i^mh_j^n$ is of infinite order as soon as this distance is nonzero, the claim now follows by setting  $K = \ell_i / \ell_j+\ell_j/\ell_i+1$.

\begin{claim}\label{claim 4}
Let  $i\in \{1,\dots,k\}$ and $j\in \{1,\dots,l\}$ be such that $M_i\cap N_j$ is infinite. Then $P_i=Q_j$ and these belong to $\E(g,h)$. 
\end{claim} 

Indeed, remember that the walls in $M_i$ are pairwise parallel by Lemma~\ref{lemme DJ}. Since $M_i\cap N_j\subseteq \Ess(g_{i'})\cap\Ess(h_{j'})$ for some $i'\in I$ such that $P_i=P_{i'}$ and some $j'\in J$ such that $Q_j=Q_{j'}$ by Claim~\ref{claim 0}, Corollary~\ref{cor:Pc:2refl} then yields $P_i=Q_j$.

Let now $C$ denote the minimal distance between two parallel walls in $X$ and set $K:=\frac{|g|+|h|}{C}+1$. Let $m,n\in\ZZ$ be such that $\min\{|m|,|n|,|m/n|+|n/m|\}\geq K$. We now show that $P_i\leq\Pc(g^mh^n)$. By Lemma~\ref{lemme Krammer} and Corollary~\ref{cor:Pc:2refl}, it is sufficient to check that infinitely many walls in $M_i\cap N_j$ are $g^mh^n$-essential. 

Note first that for any wall $\mu\in M_i\cap N_j$, we have $g^{\epsilon m}\mu\in M_i$ and $h^{\epsilon n}\mu\in N_j$ for $\epsilon\in\{+,-\}$. Thus, since $M_i\cap N_j$ is infinite, there exist infinitely many such $\mu\in M_i\cap N_j$ with the property that $g^{\epsilon m}\mu$ lies between $\mu$ and some $\mu_{\epsilon}\in M_i\cap N_j$ and $h^{\epsilon n}\mu$ lies between $\mu$ and some $\mu'_{\epsilon}\in M_i\cap N_j$ for $\epsilon\in\{+,-\}$. We now show that any such $\mu$ is $g^mh^n$-essential, as desired. Consider thus such a $\mu$. 

Let $D$ be a $g$-axis and $D'$ be an $h$-axis.
Since $M_i\cap N_j\subseteq \Ess(g)\cap\Ess(h)$, Lemma~\ref{lem:essential=transverse} implies that each of the walls $\mu$, $\mu_{\epsilon}$ and $\mu'_{\epsilon}$ for $\epsilon\in\{+,-\}$ is transverse to both $D$ and $D'$. In particular, the choice of $\mu$ implies that $g^{\epsilon m}\mu$ and $h^{\epsilon n}\mu$ for $\epsilon\in\{+,-\}$ are also transverse to both $D$ and $D'$.

Let $\alpha\in\Phi$ be such that $\partial\alpha=\mu$ and $g^m\alpha\subsetneq\alpha$. 
If $h^n\alpha\subsetneq\alpha$ then clearly $g^mh^n\alpha\subsetneq\alpha$, as desired. Suppose now that $h^n\alpha\supsetneq\alpha$.

Note that the walls in $\la g\ra \mu\cup \la h\ra\mu$ are pairwise parallel since this is the case for the walls in $W_0\cdot\mu$ by Lemma~\ref{lemme DJ} and since $g,h\in W_0$.

Assume now that $|n|>|m|$, the other case being similar. In particular, $|n/m|>|g|/C$. Then $\dist(\mu,g^{-m}\mu)\leq |m|\cdot |g|< |n|\cdot C\leq \dist(\mu,h^n\mu)$ and so the wall $g^{-m}\mu$ lies between $\mu$ and $h^n\mu$. Thus $\alpha\subsetneq g^{-m}\alpha\subsetneq h^n\alpha$ and so $g^mh^n\alpha\supsetneq \alpha$, as desired.

\begin{claim}\label{claim 5}
For all  $i\in I$ and $j\in J$, the sets $P_i $ and $ Q_j$ both belong to $\E(g,h)$.
\end{claim} 
We only deal with $P_i$; the argument for $Q_j$ is similar. 

Let $D$ denote a $g$-axis, and $D'$ an $h$-axis in $X$. By Claim~\ref{claim 4} we may assume that $M_i \cap \Ess(h)$ is finite. Moreover, by Claim~\ref{claim 2} we may assume there exists a $j\in J$ such that $[P_i,Q_j]\neq 1$. 

If $N_j\cap \Ess(g)$ is infinite, then $N_j\cap M_{i'}$ is infinite for some $i'\in\{1,\dots,k\}$ and thus Claim~\ref{claim 4} yields that $Q_j=P_{i'}\in\E(g,h)$. In particular, $[Q_j,P_s]=1$ as soon as $P_s\neq P_{i'}$. This implies $P_i=P_{i'}\in \E(g,h)$, as desired. We now assume that $N_j\cap \Ess(g)$ is finite.

Thus by Lemma~\ref{lem:essential=transverse}, only finitely many walls in $M_i$ intersect $D'$ and only finitely walls in $N_j$ intersect $D$.

Take $m_1\in M_i$ and $m_2\in N_j$. By Claim~\ref{claim 0} and Corollary~\ref{cor:Pc:2refl}, there exists some $k_0\in\NN$ such that if one sets $M:=\{g^{sk_0}m_1 \ | \ s\in\ZZ\}\subseteq M_{i}$ and $N:=\{h^{tk_0}m_2 \ | \ t\in\ZZ\}\subseteq N_{j}$, then any two reflections associated to distinct walls of $M$ (respectively, $N$) generate $P_{i}$ (respectively, $Q_{j}$) as parabolic subgroups. Also, we may assume that no wall in $M$ intersects $D'$ and that no wall in $N$ intersects $D$. 

If every wall of $M$ intersects every wall of $N$, then since $[P_i,Q_j]\neq 1$, Lemma~\ref{lemme grid lemma} yields that $P_i=Q_j$ is of affine type and Claim~\ref{claim 3} allows us to conclude. Up to making a different choice for $m_1$ and $m_2$ inside $M$ and $N$ respectively, we may thus assume that $m_1$ is parallel to $m_2$. For the same reason, we may also choose $m_1'\in M$ and $m_2'\in N$ such that $D'$ lies between $m_1$ and $m_1'$, $D$ lies between $m_2$ and $m_2'$, and such that $m_1\cap m_2'=m_2\cap m_1'=m_1'\cap m_2'=\varnothing$.

Let now $s_0,t_0\in\ZZ$ be such that $g^{s_0k_0}m_1=m_1'$ and $h^{t_0k_0}m_2=m_2'$. Up to interchanging $m_1$ and $m_1'$ (respectively, $m_2$ and $m_2'$), we may assume that $s_0>0$ and $t_0>0$.

Let $\alpha,\beta\in\Phi$ be such that $\partial\alpha=m_1$, $\partial\beta=m_2$ and such that $D'$ is contained in $\alpha\cap - g^{s_0k_0}\alpha$ and $D$ is contained in $\beta\cap -h^{t_0k_0}\beta$. For each $s,t\in\ZZ$, set $\alpha_s:=g^{sk_0}\alpha$ and $\beta_t=h^{tk_0}\beta$ (see Figure~\ref{thmCox}).
Since for two roots $\gamma,\delta\in\Phi$ with $\partial\gamma$ parallel to $\partial\delta$, one of the possibilities $\gamma\subseteq\delta$ or $\gamma\subseteq -\delta$ or $-\gamma\subseteq\delta$ or $-\gamma\subseteq -\delta$ must hold, this implies that 
$$\alpha_{s_0}\subseteq -\beta_{t_0}, \quad -\alpha\subseteq \beta \quad\textrm{and}\quad \beta_{t_0}\subseteq\alpha.$$ 

\begin{figure}
\begin{center}
\def\svgwidth{6.5cm}
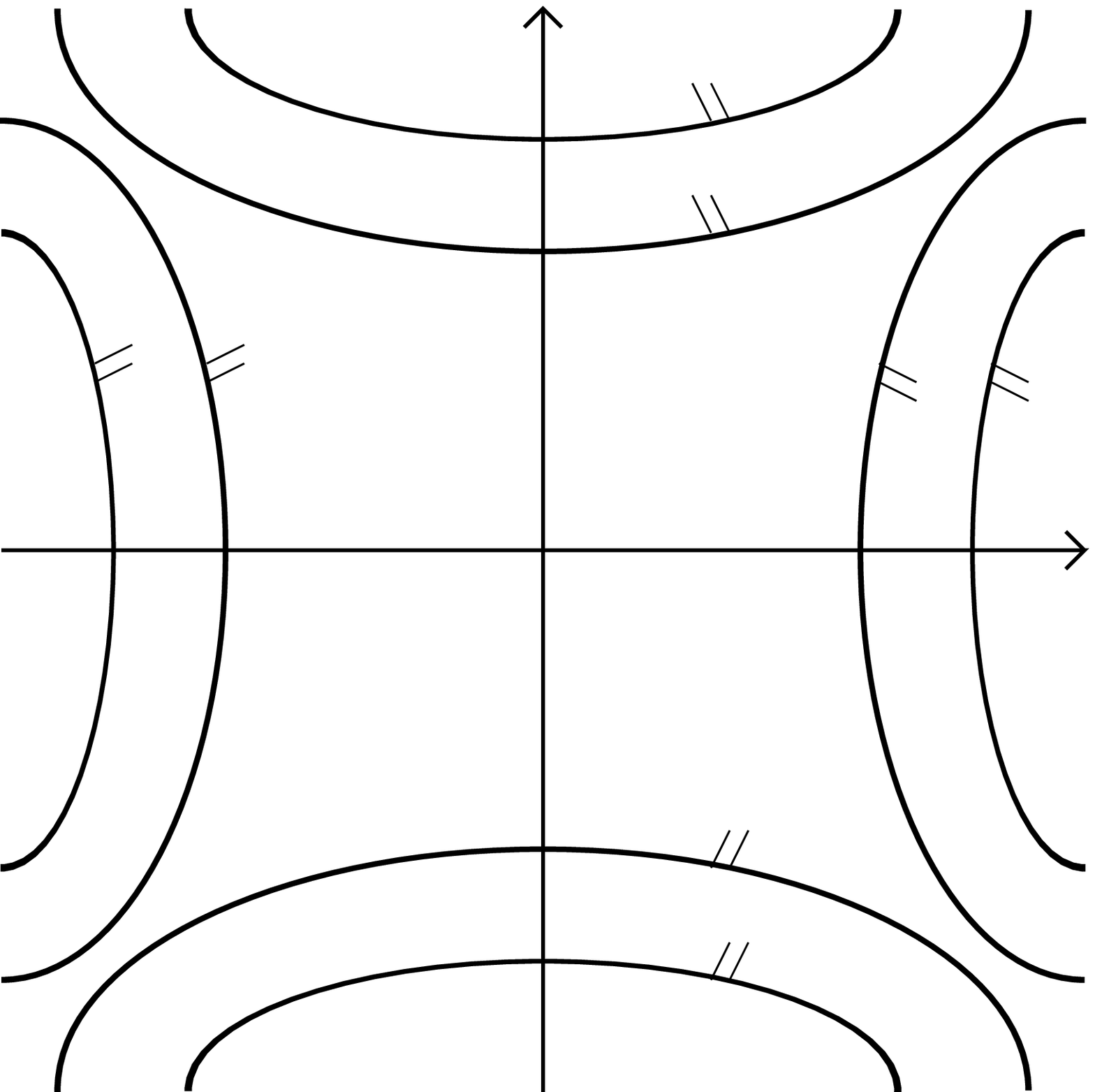
\caption{Claim \ref{claim 5}.}
\label{thmCox}
\end{center}
\end{figure}

Set $K:=(s_0+t_0+1)k_0$ and let $m,n\in\ZZ$ be such that $|m|,|n|>K$. We now prove that $P_i\leq \Pc(g^mh^n)$. By Lemma~\ref{lemme Krammer}, it is sufficient to show that either $\alpha_{-1}$ and $\alpha$ or $\alpha_{s_0}$ and $\alpha_{s_0+1}$ are $g^mh^n$-essential.
We distinguish several cases depending on the respective signs of $m,n$.

\begin{itemize}
\item
If $m,n>0$, then
$$g^mh^n\alpha_{s_0+1}\subseteq g^mh^n\alpha_{s_0}\subseteq g^mh^n\beta\subsetneq g^m\beta_{t_0}\subseteq g^m\alpha\subsetneq \alpha_{s_0+1}\subseteq\alpha_{s_0}$$
so that $\alpha_{s_0}$ and $\alpha_{s_0+1}$ are $g^mh^n$-essential. 

\item
If $m,n<0$, then
$$g^mh^n\alpha_{-1}\supseteq g^mh^n\alpha\supseteq g^mh^n\beta_{t_0}\supsetneq g^m\beta\supseteq g^m\alpha_{s_0}\supsetneq \alpha_{-1}\supseteq \alpha$$
so that $\alpha_{-1}$ and $\alpha$ are $g^mh^n$-essential. 

\item
If $m>0$ and $n<0$, then
$$g^mh^n\alpha_{s_0+1}\subseteq g^mh^n\alpha_{s_0}\subseteq g^mh^n(-\beta_{t_0})\subsetneq g^m(-\beta)\subseteq g^m\alpha\subsetneq \alpha_{s_0+1}\subseteq\alpha_{s_0}$$
so that $\alpha_{s_0}$ and $\alpha_{s_0+1}$ are $g^mh^n$-essential. 

\item
If $m<0$ and $n>0$, then
$$g^mh^n\alpha_{-1}\supseteq g^mh^n\alpha\supseteq g^mh^n(-\beta)\supsetneq g^m(-\beta_{t_0})\supseteq g^m\alpha_{s_0}\supsetneq \alpha_{-1}\supseteq \alpha$$
so that $\alpha_{-1}$ and $\alpha$ are $g^mh^n$-essential. 
\end{itemize}

This concludes the proof of the theorem.
\end{proof}

The following corollary will be of fundamental importance in the rest of the paper. It was stated as Corollary~\ref{cor:fundamental} in the introduction. 

\begin{cor}\label{cor thm Coxeter}
Let $H$ be a subgroup of $W$. Then there exists $h\in H\cap W_0$ such that $[\Pc(H):\Pc(h)]<\infty$.
\end{cor}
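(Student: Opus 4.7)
The plan is to first replace $H$ by its intersection with $W_0$ (so as to be able to invoke Theorem~\ref{thm thm Coxeter}), and then use a maximality argument to produce a single element of $H \cap W_0$ whose parabolic closure recovers $\Pc(H \cap W_0)$.

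First I set $H' := H \cap W_0$. Since $W_0$ has finite index in $W$, the subgroup $H'$ has finite index in $H$, so Lemma~\ref{lemme parabolique indice fini} ensures that $\Pc(H')$ has finite index in $\Pc(H)$. It thus suffices to find $h \in H'$ with $\Pc(h) = \Pc(H')$.

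Next I appeal to the fact that parabolic subgroups of $W$ satisfy the ascending chain condition. Indeed, every parabolic subgroup $P$ of $W$ is itself a Coxeter group (of rank at most $|S|$) whose own parabolic subgroups coincide with those parabolic subgroups of $W$ contained in $P$, and a parabolic subgroup of $P$ of rank $\rank(P)$ must equal $P$. Hence the rank strictly drops along any strict inclusion of parabolic subgroups, which bounds the length of any chain by $|S|+1$. Using this ACC, I choose $h \in H'$ for which $\Pc(h)$ is maximal (with respect to inclusion) among $\{\Pc(h'') : h'' \in H'\}$.

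Finally, given any $h' \in H'$, I apply Theorem~\ref{thm thm Coxeter} to the pair $(h, h') \in W_0 \times W_0$ to obtain integers $m, n$ with $\Pc(h) \cup \Pc(h') \subseteq \Pc(h^m (h')^n)$. Because $h^m (h')^n$ lies in $H'$, the maximality of $\Pc(h)$ forces this inclusion to be an equality, whence in particular $h' \in \Pc(h') \subseteq \Pc(h)$. Consequently $H' \subseteq \Pc(h)$, which gives $\Pc(H') = \Pc(h)$ and completes the argument.

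I do not foresee a serious obstruction: Theorem~\ref{thm thm Coxeter} does the real work, the ACC on parabolic subgroups is a standard structural fact about Coxeter groups, and Lemma~\ref{lemme parabolique indice fini} is tailor-made for the passage between $H$ and $H \cap W_0$.
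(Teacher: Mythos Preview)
Your proof is correct and follows essentially the same route as the paper's: pass to $H\cap W_0$ via Lemma~\ref{lemme parabolique indice fini}, pick $h\in H\cap W_0$ with $\Pc(h)$ maximal, and use Theorem~\ref{thm thm Coxeter} to conclude $\Pc(h)=\Pc(H\cap W_0)$. The only minor imprecision is the phrase ``forces this inclusion to be an equality'': what maximality actually gives is $\Pc(h)=\Pc(h^m(h')^n)$ (not $\Pc(h)\cup\Pc(h')=\Pc(h^m(h')^n)$, since the left side need not be a group), from which $\Pc(h')\subseteq\Pc(h)$ follows; but your conclusion is unaffected.
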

\begin{proof}
Take $h\in H\cap W_0$ such that $\Pc(h)$ is maximal. Then $\Pc(h)=\Pc(H\cap W_0)$, for otherwise there would exist $g\in H\cap W_0$ such that $\Pc(g)\not\subseteq \Pc(h)$, and hence Theorem~\ref{thm thm Coxeter} would yield integers $m,n$ such that $\Pc(h)\subsetneq \Pc(g^mh^n)$, contradicting the choice of $h$. The result now follows from Lemma~\ref{lemme parabolique indice fini} since $[H:H\cap W_0]<\infty$.
\end{proof}

\begin{rem}
Note that the conclusion of Corollary~\ref{cor thm Coxeter} cannot be improved: indeed, one cannot expect that there is some $h \in H$ such that $\Pc(H) = \Pc(h)$ in general. Consider for example the Coxeter group $W=\la s\ra\times\la t\ra\times\la u\ra$, which is a direct product of three copies of $\ZZ/2\ZZ$. Then the parabolic closure of the subgroup $H=\la st,tu\ra$ of $W$ is the whole of $W$, but there is no $h\in H$ such that $\Pc(h)=W$.
\end{rem}

\subsection{On walls at bounded distance from a residue}

We finish this section with a couple of observations on Coxeter groups which we shall need in our study of open subgroups of Kac--Moody groups.

Given a subset $J \subseteq S$, we set $\Phi_J=\{\alpha\in\Phi \ | \ \exists v\in W_J, \ s\in J: \alpha=v\alpha_s\}$, where $\alpha_s$ denotes the positive root associated with the reflection $s$.

\begin{lemma}\label{lemme racine essentielle}
Let $L\subseteq S$ be essential. Then for each root $\alpha\in\Phi_L$, there exists $w\in W_L$ such that $w.\alpha \subsetneq \alpha$. In particular $\alpha$ is $w$-essential.
\end{lemma}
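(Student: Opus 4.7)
The plan is to reduce the statement to the case where $L=L_0$ is a single irreducible non-spherical component and $\alpha=\alpha_s$ is a simple root, then to produce $w$ as a suitable conjugate of an element of $W_{L_0}$ whose parabolic closure is the whole of $W_{L_0}$.

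For the reduction, write $\alpha=v\alpha_s$ with $v\in W_L$ and $s\in L$, and let $L_0$ be the irreducible component of $L$ containing $s$. Via the product decomposition $W_L=W_{L_0}\times W_{L\setminus L_0}$, factor $v=v_0v'$ with $v_0\in W_{L_0}$ and $v'\in W_{L\setminus L_0}$. Each generator $r_t$ with $t\in L\setminus L_0$ commutes with $r_s$ and has wall perpendicular to $\partial\alpha_s$, and a direct geometric check (any fixed point of $r_t$ on $\partial\alpha_s$ certifies this) shows that such $r_t$ preserves the half-space $\alpha_s$; hence $v'\alpha_s=\alpha_s$ and $\alpha=v_0\alpha_s$. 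It then suffices to find $w_s\in W_{L_0}$ with $w_s\alpha_s\subsetneq\alpha_s$, for then $w:=v_0w_sv_0^{-1}\in W_L$ satisfies $w\alpha\subsetneq\alpha$. The problem is thus reduced to: for $L_0\subseteq S$ irreducible non-spherical and $s\in L_0$, produce $w\in W_{L_0}$ with $w\alpha_s\subsetneq\alpha_s$.

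For the reduced case, by Corollary~4.3 of \cite{MR2585575} (cited elsewhere in the excerpt), there exists $w_1\in W_{L_0}$ with $\Pc(w_1)=W_{L_0}$. Since $W_{L_0}$ is of essential type, Lemma~\ref{lemme Krammer}(ii) gives $\Pc^\infty(w_1)=W_{L_0}$, so by Lemma~\ref{lemme Krammer}(i) the set $\{r_m\mid m\in\Ess(w_1)\}$ generates $W_{L_0}$ as an abstract group. The key observation is that such a reflection-generating set must meet every $W_{L_0}$-conjugacy class of reflections of $W_{L_0}$: there is a surjective sign homomorphism $\epsilon\colon W_{L_0}\to(\ZZ/2\ZZ)^k$ (with $k$ the number of conjugacy classes of reflections) sending each reflection to the standard generator indexed by its class, well-defined because $r_s$ and $r_t$ are conjugate whenever $m(s,t)$ is odd so that the braid relation automatically respects $\epsilon$. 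Hence some $m_0\in\Ess(w_1)$ is $W_{L_0}$-conjugate to $\partial\alpha_s$: write $m_0=g\partial\alpha_s$ with $g\in W_{L_0}$, and set $w:=g^{-1}w_1g\in W_{L_0}$. Then $g^{-1}$ applied to a $w_1$-axis gives a translation axis of $w$ crossing $\partial\alpha_s$, so $\partial\alpha_s\in\Ess(w)$ by Lemma~\ref{lem:essential=transverse}. Since $\partial\alpha_s$ is $w$-essential, so is one of $\alpha_s,-\alpha_s$ as a root, whence some power of $w^{\pm 1}$ maps $\alpha_s$ strictly inside itself.

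The main obstacle in this plan is the sign-homomorphism step: although it is a classical fact about Coxeter groups, it is not recorded in the preceding material, so a complete write-up should include the brief verification that $\epsilon$ is well-defined. The rest of the argument is a straightforward application of Lemmas~\ref{lemme Krammer} and~\ref{lem:essential=transverse} together with the cited Corollary~4.3 of \cite{MR2585575}.
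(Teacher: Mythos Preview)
Your argument is correct, but it takes a substantially different and more elaborate route than the paper's. The paper's proof is two lines: it quotes \cite[Prop.\ 8.1, p.\ 309]{Hee} to produce a root $\beta\in\Phi_L$ with $\alpha\cap\beta=\varnothing$, and then observes that $w=r_\alpha r_\beta$ (or its inverse) satisfies $w\alpha\subsetneq\alpha$, since the two parallel walls make $\langle r_\alpha,r_\beta\rangle$ infinite dihedral. There is no reduction to a simple root, no appeal to parabolic closures, and no conjugacy-class argument.

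Your approach trades the external input from \cite{Hee} for the machinery already built in the paper (Lemmas~\ref{lem:essential=transverse} and~\ref{lemme Krammer}) together with \cite[Cor.~4.3]{MR2585575} and the sign-character trick. This is legitimate and self-contained relative to \cite{Hee}, but it is considerably longer and yields a less explicit $w$: yours is a conjugate of a power of some element with full parabolic closure, whereas the paper's is simply a product of two reflections along parallel walls. Two small points worth tightening in your write-up: your parenthetical justification that $r_t$ preserves $\alpha_s$ (``any fixed point of $r_t$ on $\partial\alpha_s$'') is not quite the right reason---the clean argument is that $r_tC_0$ is $t$-adjacent to $C_0$ and hence still lies in $\alpha_s$; and at the end you should state explicitly that the required element is the appropriate power $w^{\pm n}$, since the lemma asks for $w\alpha\subsetneq\alpha$ rather than merely $\alpha$ being $w$-essential.
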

\begin{proof}
Let $\alpha\in\Phi_L$. By \cite[Prop.\thinspace 8.1, p.\thinspace 309]{Hee}, there exists a root $\beta\in\Phi_L$ such that $\alpha\cap\beta=\varnothing$. We can then take $w=r_{\alpha}r_{\beta}$ or its inverse. 
\end{proof}

\begin{lemma}\label{lemme 8 murs}
Let $L\subseteq S$ be essential, and let $R$ be the standard $L$-residue of the Coxeter complex $\Sigma$ of $W$. 

Then for each wall $m$ of $\Sigma$, the following assertions are equivalent:
\begin{itemize}
\item[(i)]
$m$ is perpendicular to every wall of $R$,
\item[(ii)]
$[r_{m},W_L]=1$,
\item[(iii)]
There exists $n>0$ such that $R$ is contained in an $n$-neighbourhood of $m$.
\end{itemize} 
\end{lemma}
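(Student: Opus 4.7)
The plan is to establish the three equivalences as (i)$\Leftrightarrow$(ii), then (ii)$\Rightarrow$(iii), and finally (iii)$\Rightarrow$(ii), the last being the main content.

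First, for (i)$\Leftrightarrow$(ii), I would use that the walls of the standard $L$-residue $R$ are precisely those walls $m'$ with $r_{m'}\in W_L$, and that the reflections of $W_L$ generate $W_L$. Under this identification, (i) reduces to $[r_m, W_L]=1$ together with $r_m$ being distinct from every reflection of $W_L$. The distinctness is automatic: since $L$ is essential, $W_L$ is a direct product of infinite irreducible Coxeter groups, whose center is trivial, so any $r_m\in W_L$ centralizing $W_L$ must equal $1$; hence (ii) already forces $r_m\notin W_L$.

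For (ii)$\Rightarrow$(iii), conjugation gives $w r_m w\inv=r_{wm}$, so (ii) is equivalent to $W_L$ stabilizing the wall $m$ setwise. Letting $c_0$ be the base chamber of $R$ and $d:=\dist(c_0,m)<\infty$, the $W_L$-invariance of $m$ yields $\dist(wc_0,m)=\dist(c_0,w\inv m)=d$ for every $w\in W_L$. Since $S$ is finite, chambers in the Davis complex have uniformly bounded diameter $D$, so $R=W_L\cdot c_0$ is contained in the $(d+D)$-neighbourhood of $m$.

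The crux is (iii)$\Rightarrow$(ii). It suffices to show $r_m$ commutes with every $s\in L$. Fix $s\in L$; since $\alpha_s\in\Phi_L$, Lemma~\ref{lemme racine essentielle} yields an infinite-order element $w\in W_L$ with $w\alpha_s\subsetneq\alpha_s$. For any $K\geq 7$, the nested family $\alpha_i:=w^{K-i}\alpha_s$ ($i=0,\dots,K$) consists of walls of $R$, and by hypothesis each $\partial\alpha_i$ sits in the $n$-tube around $m$; moreover $A:=\la r_{\alpha_i}\;|\;i=0,\dots,K\ra$ is infinite dihedral. The translation axis $\lambda$ of $w$ lies in $X_L\subseteq N_n(m)$, so the CAT(0) flat strip theorem provides a flat strip of width $\leq n$ between $\lambda$ and its nearest-point projection $\lambda'\subseteq m$. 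Each $\partial\alpha_i$ is transverse to $\lambda$, and the plan is to combine this flat strip with local finiteness of walls in the proper Davis complex to show that, after enlarging $K$, every $\partial\alpha_i$ also meets $\lambda'\subseteq m$. Lemma~\ref{lem:8walls} then offers two alternatives: either $r_m$ centralizes $\Pc(A)\ni r_{\alpha_s}=s$, giving $[r_m,s]=1$ directly, or else $\la A\cup\{r_m\}\ra$ is a Euclidean triangle group.

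The main obstacle is ruling out the Euclidean triangle-group alternative. My plan is to exploit that the nested family $\{w^i\partial\alpha_s\}_{i\in\ZZ}$ is infinite and entirely $\la w\ra$-periodic: if $\la A,r_m\ra$ were a rank-$3$ affine Coxeter group, the whole parallelism class of $\partial\alpha_s$ inside this affine group would translate away from any fixed non-parallel wall such as $m$, contradicting (iii), which forces \emph{every} $w^i\partial\alpha_s$ to stay in the bounded tube $N_n(m)$. Pushing this through (possibly with an appeal to Lemma~\ref{lem:PC}(iv) to match the irreducible affine component of $\Pc(\la A,r_m\ra)$ against $\mathscr{Z}_W(W_L)=W_{L^\perp}$ from Lemma~\ref{lemme Deodhar}) rules the alternative out, and we conclude $[r_m,s]=1$ for every $s\in L$, i.e.\ (ii). The two most delicate points are therefore (a) converting the metric hypothesis (iii) into the combinatorial statement that the walls $\partial\alpha_i$ meet $m$, and (b) eliminating the Euclidean triangle-group alternative using the periodicity of the nested family.
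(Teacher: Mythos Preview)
Your treatment of (i)$\Leftrightarrow$(ii) and (ii)$\Rightarrow$(iii) is fine and matches the paper's. The difficulties lie entirely in (iii)$\Rightarrow$(ii), and here both of your ``delicate points'' (a) and (b) have genuine gaps.

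\textbf{Step (a).} Your flat-strip argument is both more complicated than necessary and not clearly correct: the nearest-point projection $\lambda'\subseteq m$ of a geodesic line to a convex wall need not be a geodesic line, so the flat strip theorem does not apply directly; and even granting a flat strip, deducing that the walls $\partial\alpha_i$ actually meet $m$ (rather than just the strip) requires further work you have not supplied. The paper's argument is far more direct and yields more: for \emph{every} wall $m'$ of $R$, Lemma~\ref{lemme racine essentielle} gives $w\in W_L$ with $m'$ $w$-essential, so the $\langle w\rangle$-translates of $m'$ go to infinity inside $R$; if $m$ were parallel to $m'$, chambers of $R$ near those translates would lie arbitrarily far from $m$, contradicting (iii). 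Hence $m$ meets every wall of $R$ and (by the same token) $m$ is not itself a wall of $R$. This last fact is exactly what you are missing for step (b).

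\textbf{Step (b).} Your proposed contradiction does not work. Condition (iii) says $R\subseteq N_n(m)$, not that the walls $w^i\partial\alpha_s$ lie in $N_n(m)$; walls are unbounded and never sit in such a tube. In a Euclidean triangle group the parallel family $\{w^i\partial\alpha_s\}$ consists of lines each meeting the transversal $m$ in a single point marching off to infinity along $m$ --- there is no ``translating away from $m$'' to exploit. The paper instead uses the observation from step (a) that $m$ is not a wall of $R$: in the Euclidean triangle alternative of Lemma~\ref{lem:8walls} one has $r_m\in \Pc(\langle A\cup\{r_m\}\rangle)=\Pc(A)\leq W_L$ (via Lemma~\ref{lem:PC}(iii) and the affine argument, together with \cite[Lem.~12]{CaRe}), forcing $m$ to be a wall of $R$ --- a contradiction. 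Once you have established that $m$ meets every wall of $R$ and is not one of them, the triangle case dies immediately; your periodicity idea cannot substitute for this.
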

\begin{proof}
We first show that (iii)$\Rightarrow$(ii). By Lemma~\ref{lemme racine essentielle}, if $m'$ is a wall of $R$ (that is, a wall intersecting $R$), then there exists $w\in W_L$ such that one of the two half-spaces associated to $m'$ is $w$-essential. It follows that $m$ and $m'$ cannot be parallel since $R$ is at a bounded distance from $m$. Hence $m$ is transversal to every wall of $R$, and does not intersect $R$. Back to an arbitrary wall $m'$ of $R$, consider a wall $m''$ of $R$ that is parallel to $m'$ and such that the reflection group generated by the two reflections $r_{m'}$ and $r_{m''}$ is infinite dihedral. Such a wall $m''$ exists by Lemma~\ref{lemme racine essentielle}. Then $r_m$ centralizes these reflections by Lemma~\ref{lem:8walls} and \cite[Lem.12]{CaRe}. As $m'$ was arbitrary, this means that $r_m$ centralizes $W_L$.

The equivalence of (i) and (ii) is trivial.

Finally, to show (i)$\Rightarrow$(iii), notice that if $C$ is a chamber of $R$ and $t$ a reflection associated to a wall of $R$, then the distance from $C$ to $m$ equals the distance from $t\cdot C$ to $m$. Indeed, if $\alpha$ is the root associated to $m$ not containing $R$ and $D$ is the projection of $C$ onto $\alpha$, then $t\cdot D$ is the projection of $t\cdot C$ onto $\alpha$. As $W_L$ is transitive on $R$, (iii) follows.
\end{proof}

\section{Open and parabolic subgroups of Kac--Moody groups}

Basics on Kac--Moody groups  and their completions can be found in \cite{theseBR}, \cite{CaRe} and references therein. We focus here on the case of a finite ground field.

Let $\GGG=\GGG(\FF_q)$ be a (minimal) Kac--Moody group over a finite field $\FF_q$ of order $q$. The group $\GGG$ is endowed with a root group datum $\{U_{\alpha} \; | \; \alpha\in\Phi=\Phi(\Sigma(W,S))\}$ for some Coxeter system $(W,S)$, which yields a twin BN-pair $(\mathcal B_+, \mathcal B_-, \mathcal N)$ with associated twin building $(\Delta_+, \Delta_-)$. Let $C_0$ be the fundamental chamber of $\Delta_+$, namely the chamber such that $\mathcal B_+ = \Stab_\GGG(C_0)$, and let $A_0 \subset \Delta_+$ be the fundamental apartment, so that $\mathcal N = \Stab_\GGG(A_0)$ and $\mathcal H := \mathcal B_+ \cap \mathcal N = \Fix_\GGG(A_0)$. 
We  identify $\Phi$ with the set of half-spaces of  $A_0$. 

We next let $G$ be the completion of $\GGG$ with respect to the positive building topology. Thus the finitely generated group $\GGG$ embeds densely in the topological group $G$, which is locally compact, totally disconnected and acts properly and continuously on $\Delta := \Delta_+$ by automorphims. A completed Kac--Moody group over a finite field shall be called a \textbf{locally compact Kac--Moody group}. Let $B = \overline{\mathcal B_+}$ be the closure of $\mathcal B_+$ in $G$, let $N = \Stab_G(A_0)$ and $H = B \cap N = \Fix_G(A_0)$. [We warn the reader that $\mathcal N$ and $\mathcal H$ are discrete, whence closed in $G$ while $N$ and $H$ are non-discrete closed subgroups.]
The pair $(B, N)$ is a BN-pair of type $(W, S)$ for $G$; in particular we have $N/H \cong W$. Moreover, the group $B$ is a compact open subgroup, and every standard parabolic subgroup $P_J = B W_J B$ for some $J \subseteq S$ is thus open in $G$. Important to our later purposes is the fact that the group $G$ acts transitively on the  \emph{complete} apartment system of $\Delta$. In particular $B$ acts transitively on the apartments containing $C_0$.

For a root $\alpha\in\Phi$, we denote as before  the unique reflection of $W$ fixing the wall $\partial\alpha$ pointwise by $r_\alpha$. In addition, we choose some element $n_\alpha \in  N \cap \langle U_{\alpha}\cup U_{-\alpha}\rangle$ which maps onto $r_{\alpha}$ under the quotient map $N \to N/H \cong W$.

\medskip
Before we state a more precise version of Theorem~\ref{main thm intro}, we will need some additional results on the BN-pair structure of $G$. This is the object of the following paragraph.

\subsection{On Levi decompositions in complete Kac--Moody groups}

Given $J\subseteq S$, we denote by $\mathcal P_J= \mathcal B_+ W_J \mathcal B_+$ (resp. $P_J = B W_J B$) the standard parabolic subgroup of $\GGG$ (resp. $G$) of type $J$ and by $R_J(C_0)$ the $J$-residue of $\Delta$ containing the chamber $C_0$. Thus $\mathcal P_J = \Stab_\GGG(R_J(C_0))$, $P_J = \Stab_G(R_J(C_0))$ and $\mathcal P_J$ is dense in $P_J$. 

We further set  $\Phi_J=\{\alpha\in\Phi \ | \ \exists v\in W_J, \ s\in J: \alpha=v\alpha_s\}$ and
$$
\mathcal L_J^+  = \la U_{\alpha} \; | \; \alpha\in\Phi_J\ra.
$$
Finally, we set $\mathcal L_J=\mathcal H\cdot\mathcal L_J^+$ and denote by $\mathcal U_J$ the normal closure of $\la U_{\alpha} \;| \; \alpha\in\Phi, \ \alpha\supset R_{J}(C_0)\cap A_0\ra$ in $\mathcal B_+$. Following  \cite[6.2.2]{theseBR}, there is  a semidirect decomposition 
$$
\mathcal P_J = \mathcal L_J \ltimes \mathcal U_J.
$$ 
The group $\mathcal U_J$ is called the \textbf{unipotent radical} of the parabolic subgroup $\mathcal P_J$, and $\mathcal L_J$ is called the  \textbf{Levi factor}. 

We next define 
$$
L_J^+ = \overline{\mathcal L_J^+}, 
\hspace{1cm}
L_J = \overline{\mathcal L_J}
\hspace{1cm} \text{and} \hspace{1cm}
U_J = \overline{\mathcal U_J}.
$$ 
Thus $U_J$ and $L_J$ are closed subgroups of $P_J$, respectively called the  \textbf{unipotent radical} and  the  \textbf{Levi factor}.

\begin{lemma}\label{lemme these BR}
We have the following:
\begin{enumerate}[(i)]
\item $U_J$ is a compact normal subgroup of $P_J$, and we have $P_J = L_J\cdot U_J$. 

\item 
$L_J^+ $ is normal in $L_J$ and we have $L_J = \mathcal H\cdot L_J^+$. 

\end{enumerate}
\end{lemma}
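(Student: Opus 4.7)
My plan is to deduce both assertions from the corresponding algebraic decomposition $\mathcal{P}_J = \mathcal{L}_J \ltimes \mathcal{U}_J$ at the level of the minimal Kac--Moody group (cited from \cite[6.2.2]{theseBR}), combined with two elementary topological observations: first, that $\mathcal{U}_J$ lives inside the compact open subgroup $B$, and second, that $\mathcal{H}$ is finite (being isomorphic to a finite power of $\FF_q^\times$).

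For assertion~(i), I would first show that $U_J$ is compact. By construction, each root group $U_\alpha$ with $\alpha \supset R_J(C_0) \cap A_0$ fixes the chamber $C_0$ (since such $\alpha$ is positive), hence lies in $\mathcal{B}_+$. Taking the normal closure in $\mathcal{B}_+$ keeps us inside $\mathcal{B}_+$, so $\mathcal{U}_J \subseteq \mathcal{B}_+$, and passing to closures in $G$ gives $U_J \subseteq B$. Since $B$ is compact and $U_J$ is closed, $U_J$ is compact. For normality, I will use that conjugation by any fixed element of $G$ is a homeomorphism: since $\mathcal{P}_J$ normalizes $\mathcal{U}_J$ and $\mathcal{P}_J$ is dense in $P_J$, the closed set $U_J$ is normalized by all of $P_J$. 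Finally, to obtain $P_J = L_J \cdot U_J$, I will invoke that the product of a closed subset with a compact subgroup is closed; hence $L_J \cdot U_J$ is a closed subgroup of $P_J$ containing the dense subset $\mathcal{L}_J \cdot \mathcal{U}_J = \mathcal{P}_J$, and therefore equals $P_J$.

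For assertion~(ii), normality of $L_J^+$ in $L_J$ follows by the same density argument: $\mathcal{H}$ normalizes each root group $U_\alpha$ with $\alpha \in \Phi_J$ (hence normalizes $\mathcal{L}_J^+$), so $\mathcal{L}_J = \mathcal{H}\cdot \mathcal{L}_J^+$ normalizes $\mathcal{L}_J^+$; passing to closures, $L_J$ normalizes $L_J^+$. To identify $L_J$ with $\mathcal{H}\cdot L_J^+$, the key observation is that $\mathcal{H} \cong (\FF_q^\times)^{|S|}$ is \emph{finite}, so $\mathcal{H}\cdot L_J^+$ is a finite union of cosets of the closed subgroup $L_J^+$, hence closed in $G$. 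Since it is also a subgroup containing the dense subset $\mathcal{H}\cdot \mathcal{L}_J^+ = \mathcal{L}_J$, it must equal $\overline{\mathcal{L}_J} = L_J$.

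The only step that requires some care is the verification that $\mathcal{H}$ is finite, but this is immediate from the standard description of the torus in the Kac--Moody root datum over a finite field. Everything else is a routine application of the principles that a closed subgroup normalized by a dense subgroup is normal, and that $XK$ is closed when $X$ is closed and $K$ is compact (or finite).
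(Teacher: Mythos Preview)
Your proof is correct and follows essentially the same route as the paper's: compactness of $U_J$ via $U_J \subseteq B$, normality by density of $\mathcal P_J$ in $P_J$, the decomposition $P_J = L_J \cdot U_J$ via the closedness of a product with a compact factor, and $L_J = \mathcal H \cdot L_J^+$ from finiteness of $\mathcal H$. One minor caveat: the isomorphism $\mathcal H \cong (\FF_q^\times)^{|S|}$ need not hold verbatim (the rank of the torus depends on the chosen Kac--Moody root datum, not just on $|S|$), but $\mathcal H$ is indeed finite over $\FF_q$, which is all you use.
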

\begin{proof}
Since $\mathcal U_J$ is normal in $\mathcal P_J$, which is dense in $P_J$, it is clear that $U_J$ is normal in $P_J$. Moreover $U_J$ is compact (since it is contained in $B$) and the product $L_J\cdot U_J$ is thus closed in $P_J$.  Assertion (i) follows since $L_J\cdot U_J$ contains $\mathcal P_J$. 

For assertion~(ii), we remark that $\mathcal H$ normalizes $\mathcal L_J^+$ and hence also $L_J^+$. Moreover, since $\mathcal H$ is finite, hence compact, the product $\mathcal H\cdot L_J^+$ is closed. Since $\mathcal H\cdot \mathcal L_J^+$ is dense in $L_J$, the conclusion follows.
\end{proof}

Remark that the decomposition $P_J = L_J\cdot U_J$ is even semidirect when $J$ is spherical, see \cite[section 1.C.]{RemyRonan}. It is probably also the case in general, but this will not be needed here.

\begin{lemma}\label{lemme conclusion 2}
Let $J\subseteq S$. Then every open subgroup $O$ of $P_J$ that contains the product $L_J^+\cdot U_{J\cup J^{\perp}}$ has finite index in $P_J$.
\end{lemma}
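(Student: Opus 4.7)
The plan is to exploit a tripartite decomposition $P_J = L_J^+ \cdot U_J \cdot \mathcal{H}$, which follows from Lemma~\ref{lemme these BR}, in order to reduce the counting of right cosets of $O$ in $P_J$ to a count inside the compact group $U_J$, where openness of $O$ automatically forces finite index. I expect that the conclusion will in fact follow from the weaker hypothesis $O \supseteq L_J^+$ alone, so the additional containment of $U_{J \cup J^{\perp}}$ in $O$ should not play a role in the argument (presumably it is included because it is the natural form encountered in the later applications).

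First I would rearrange the decompositions of Lemma~\ref{lemme these BR}. Starting from $P_J = L_J \cdot U_J$ and $L_J = \mathcal{H} \cdot L_J^+$, the normality of $L_J^+$ in $L_J$ (part (ii)) shows that $\mathcal{H}$ normalizes $L_J^+$, so $L_J = L_J^+ \cdot \mathcal{H}$. Similarly, since $U_J$ is normal in $P_J$ (part (i)) and $\mathcal{H} \subseteq \mathcal{B}_+ \subseteq P_J$, the group $\mathcal{H}$ normalizes $U_J$, so $\mathcal{H} \cdot U_J = U_J \cdot \mathcal{H}$. Combining these rearrangements yields $P_J = L_J^+ \cdot U_J \cdot \mathcal{H}$.

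Next I would write an arbitrary $g \in P_J$ as $g = \ell u h$ with $\ell \in L_J^+$, $u \in U_J$, $h \in \mathcal{H}$. Since $\ell \in L_J^+ \subseteq O$, the right coset $Og$ equals $O \ell u h = O u h$. Hence every right coset of $O$ in $P_J$ has the form $Ouh$ with $u \in U_J$ and $h \in \mathcal{H}$. For a fixed $h$, two such cosets $Ouh$ and $Ou'h$ coincide if and only if $uu'^{-1} \in O \cap U_J$, so the number of distinct right cosets with that fixed $h$ is exactly $[U_J : O \cap U_J]$.

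The proof then concludes by noting that both factors in the resulting bound
$$[P_J : O] \leq |\mathcal{H}| \cdot [U_J : O \cap U_J]$$
are finite: $\mathcal{H}$ is a finite group because the ground field $\FF_q$ is finite, while $O \cap U_J$ is an open subgroup of the compact group $U_J \subseteq B$ and therefore has finite index. Once the tripartite decomposition $P_J = L_J^+ U_J \mathcal{H}$ is in hand, the remaining coset-counting is essentially immediate, so I do not foresee a genuine obstacle; the only point to keep track of is that $\mathcal{H}$ normalizes both $L_J^+$ and $U_J$, which is precisely what is encoded in Lemma~\ref{lemme these BR}.
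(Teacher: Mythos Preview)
Your argument is correct, and in fact it is more direct than the paper's. The paper passes to the larger parabolic $P_{J\cup J^{\perp}}$, forms the quotient $\pi_1\colon P_{J\cup J^{\perp}}\to P_{J\cup J^{\perp}}/U_{J\cup J^{\perp}}$, then quotients further by the image of $L_J^+$, and finally argues that the image of $P_J$ under the composite map $\pi$ is compact (being a quotient of $\pi_1(U_J\cdot\mathcal H)$); the full hypothesis $O\supseteq L_J^+\cdot U_{J\cup J^{\perp}}$ is used to ensure $O=\pi^{-1}(\pi(O))$. You bypass all of this by working directly inside $P_J$: from $P_J=L_J^+\cdot U_J\cdot\mathcal H$ and $L_J^+\subseteq O$ you bound $[P_J:O]$ by $|\mathcal H|\cdot[U_J:O\cap U_J]$, which is finite since $\mathcal H$ is finite and $U_J$ is compact. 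As you observed, this shows that only the containment $L_J^+\subseteq O$ is actually needed; the extra hypothesis $U_{J\cup J^{\perp}}\subseteq O$ plays no role in your proof, whereas it is genuinely used in the paper's quotient argument. Your route is shorter and yields a slightly stronger statement; the paper's route, on the other hand, makes visible the ambient structure of $P_{J\cup J^{\perp}}$ that is relevant elsewhere in the proof of Theorem~\ref{thm complet}.
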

\begin{proof}
Set $K:=J^{\perp}$ and $U:=U_{J\cup J^{\perp}}$. Note that $U\triangleleft P_{J\cup K}=L_{J\cup K}\cdot U$. Moreover, $L_J^+$ is normal in $L_{J\cup K}$. Indeed, as $[U_{\alpha},U_{\beta}]=1$  for all $\alpha\in\Phi_J$ and $\beta\in\Phi_K$, the subgroups $\mathcal L_J^+$ and $\mathcal L_K^+$ centralize each other. Since in addition $\mathcal H$ normalizes each root group, we get a decomposition $\mathcal L_{J\cup K}=\mathcal H\cdot \mathcal L_J^+\cdot \mathcal L_K^+$. In particular, $\mathcal L_{J\cup K}$ normalizes $\mathcal L_J^+$, whence also $L_J^+$. As the normalizer of a closed subgroup is closed, this implies that $L_{J\cup K}$ normalizes $L_J^+$, as desired.

Let $\pi_1\co P_{J\cup K}\to P_{J\cup K}/U$ denote the natural projection. Then $\pi_1(L_J^+)$ is normal in $P_{J\cup K}/U$, since it is the image of $L_J^+$ under the composition map $$L_{J\cup K}\to \frac{L_{J\cup K}}{L_{J\cup K}\cap U}\stackrel{\cong}{\to}\frac{P_{J\cup K}}{U}: l\mapsto l(L_{J\cup K}\cap U)\mapsto lU.$$ 
Let $\pi\co P_{J\cup K}\to \pi_1(P_{J\cup K})/\pi_1(L_J^+)$ denote the composition of $\pi_1$ with the canonical projection onto $\pi_1(P_{J\cup K})/\pi_1(L_J^+)$. Note that $\pi$ is an open continuous group homomorphism. Then $\pi(P_J)=\pi_1(L_J^+\cdot U_J\cdot\mathcal H)/\pi_1(L_J^+)$ is compact. Indeed, it is homeomorphic to the quotient of the compact group $\pi_1(U_J\cdot\mathcal H)$ by the normal subgroup $\pi_1(L_J^+\cap U_J\cdot\mathcal H)$ under the map $$\frac{\pi_1(L_J^+\cdot U_J\cdot\mathcal H)}{\pi_1(L_J^+)}\stackrel{\cong}{\to} \frac{\pi_1(U_J\cdot\mathcal H)}{\pi_1(L_J^+\cap U_J\cdot\mathcal H)}:\pi_1(l\cdot u)\pi_1(L_J^+)\mapsto \pi_1(u)\pi_1(L_J^+\cap U_J\cdot\mathcal H).$$ 
In particular, since $\pi(O)$ is open in $\pi(P_J)$, it has finite index in $\pi(P_J)$. But then since $O=\pi\inv(\pi(O))$ by hypothesis, $O$ has finite index in $\pi\inv(\pi(P_J))=P_J$, as desired.
\end{proof}

\subsection{A refined version of Theorem~\ref{main thm intro}}

We will prove the following statement, having Theorem~\ref{main thm intro} as an immediate corollary.

\begin{thm}\label{thm complet}
Let $O$ be an open subgroup of $G$. 
Let $J\subseteq S$ be the type of a residue which is stabilized by some finite index subgroup of $O$ and minimal with respect to this property. 

Then there exist a spherical subset $J'\subseteq J^{\perp}$ and an element $g\in G$ such that 
$$L_J^+\cdot U_{J\cup J^{\perp}}<gOg\inv<P_{J\cup J'}.$$ 
In particular, $gOg\inv$ has finite index in $P_{J\cup J'}$.

Moreover, any subgroup of $G$ containing $gOg\inv$ as a finite index subgroup is contained in $P_{J\cup J''}$ for some spherical subset $J''\subseteq J^{\perp}$. In particular, only finitely many distinct parabolic subgroups contain $O$ as a finite index subgroup.
\end{thm}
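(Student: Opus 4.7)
The plan is to conjugate $O$ to position the minimal residue at $R_J(C_0)$, then establish the two-sided sandwich $L_J^+ \cdot U_{J \cup J^\perp} \subseteq gOg^{-1} \subseteq P_{J \cup J'}$, and finally deduce the finite-index and finiteness statements. First I would verify that the collection of subsets $J \subseteq S$ arising as types of residues stabilized by a finite-index subgroup of $O$ is non-empty (take $J = S$ and the full building), so a minimal $J$ exists. After conjugating $O$ by a suitable $g \in G$, arrange that this minimum is realized on $R_J(C_0)$: a normal finite-index subgroup $O_1 \triangleleft O$ satisfies $O_1 \subseteq P_J$. A first easy observation is that $J$ is then essential: any spherical irreducible component $J_0 \subseteq J$ would make $R_{J_0}(C_0)$ a finite residue, on which $O_1$ has a chamber with finite orbit whose $O_1$-fixator has finite index, enabling the strict reduction $J \mapsto J \setminus J_0$ and contradicting minimality.

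For the upper bound $gOg^{-1} \subseteq P_{J \cup J'}$, let $\pi \co N \to N/H = W$ and set $W^* := \pi(O \cap N)$. By Corollary~\ref{cor:fundamental} applied to $W^*$, pick $w \in W^*$ with $[\Pc(W^*) : \Pc(w)] < \infty$. Minimality of $J$ combined with Lemma~\ref{lemme parabolique indice fini} forces $\Pc(W^*) \subseteq \mathscr N_W(W_J) = W_J \times W_{J^\perp}$ (Lemma~\ref{lemme Deodhar}), with its projection onto the first factor of finite index in $W_J$; hence $\Pc(W^*) = W_{J \cup J'}$ for some $J' \subseteq J^\perp$. Since the finite-index $O_1 \triangleleft O$ stabilizes $R_J(C_0)$, the $O$-orbit of $R_J(C_0)$ is finite, which forces $W_{J'}$ to be finite, i.e. $J'$ spherical. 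Consequently $O$ is contained in the stabilizer $P_{J \cup J'}$ of the $(J \cup J')$-residue spanned by this orbit.

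For the lower bound $L_J^+ \cdot U_{J \cup J^\perp} \subseteq gOg^{-1}$, minimality in the other direction forces the image of $O_1$ to act essentially on $R_J(C_0)$: the image of $O_1 \cap N$ in $W_J$ must itself have parabolic closure of finite index in $W_J$, otherwise Corollary~\ref{cor:fundamental} combined with Lemma~\ref{lemme parabolique indice fini} would yield a residue of strictly smaller type stabilized by a finite-index subgroup. Lifting back to $N \cap O_1$, conjugating by elements of $O_1 \cap \mathcal{B}_+$, and using the compactness of $U_J$ together with the density of $\GGG$ in $G$ and the Levi decomposition of Section~3.1, I would extract enough root group elements $U_\alpha$ ($\alpha \in \Phi_J$) in $O_1$ to generate a subgroup dense in $L_J^+$; closedness of $O$ then yields $L_J^+ \subseteq O$. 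For the unipotent factor, $U_{J \cup J^\perp}$ is generated by root groups $U_\alpha$ for $\alpha$ containing $R_{J \cup J^\perp}(C_0) \cap A_0$; these individually commute with the already-recovered $L_J^+$ (by the commutation relations $[U_\alpha, U_\beta] = 1$ whenever $\alpha \in \Phi_{J^\perp}$, $\beta \in \Phi_J$) and lie in every conjugate of $B$ stabilizing enough combinatorial data. Openness of $O$ and a direct argument using the neighborhood basis provided by the $U_\alpha$ then gives $U_{J \cup J^\perp} \subseteq O$.

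Combining the two inclusions, an adaptation of Lemma~\ref{lemme conclusion 2} to $J \cup J'$ in place of $J$ (using that $L_{J'}^+$ is finite because $J'$ is spherical and $\FF_q$ is finite, so that $(J \cup J')^\perp$-corrections reduce to the $J^\perp$-case modulo finite quotients) yields $[P_{J \cup J'} : gOg^{-1}] < \infty$. For the finiteness of parabolic overgroups: if $Q \leq G$ contains $gOg^{-1}$ as a finite-index subgroup, Lemma~\ref{lemme parabolique indice fini} applied to the images in $W$ shows that $Q$ has the same minimal $J$, and running the preceding analysis for $Q$ yields $Q \subseteq P_{J \cup J''}$ for some spherical $J'' \subseteq J^\perp$; since $J^\perp$ is finite, only finitely many such $P_{J \cup J''}$ exist. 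The hardest step will be the lower bound: converting the combinatorial essentiality supplied by Corollary~\ref{cor:fundamental} into the genuine topological containment $L_J^+ \subseteq O$ requires a delicate interaction between the discrete group $\GGG$, its completion $G$, and the Levi decomposition recalled in Section~3.1.
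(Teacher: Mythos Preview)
Your proposal has a genuine structural gap in both the upper and lower bound arguments: you only work with the intersection $O \cap N$, where $N = \Stab_G(A_0)$ is the stabilizer of the \emph{single fixed} apartment $A_0$. There is no a priori reason for $O \cap N$ (or $O_1 \cap N$) to be large enough to witness the full behavior of $O$. An open subgroup $O$ need not have any nontrivial elements stabilizing $A_0$ beyond what lies in the compact fixator $H$; in that case your $W^* = \pi(O \cap N)$ is trivial and the argument collapses. The paper avoids this by considering, for \emph{every} apartment $A$ containing $C_0$, the group $\overline N_A = \Stab_{O_1}(A)/\Fix_{O_1}(A) \leq W$, and then uses a compactness/subsequence argument (Lemma~\ref{lemme subsequence}) to locate an apartment $A$ in which the $N_A$-orbit of $C_0$ is actually unbounded. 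Only after this can Corollary~\ref{cor thm Coxeter} be invoked meaningfully. The subsequent maximality argument over all apartments (the chain of Lemmas~\ref{lemme O contient L_I}--\ref{lemme I=J}) is what pins down $I = J$; your single-apartment approach has no substitute for this.

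A second, equally serious gap is in the passage from ``$\overline N_A$ contains a parabolic of type $I$'' to ``$O_1$ contains $L_I^+$''. You write that you would ``extract enough root group elements $U_\alpha$ \dots\ using the compactness of $U_J$ together with the density of $\GGG$'', but this misses the actual mechanism. The paper uses the (FPRS) property (Lemma~\ref{lemme BR-PEC}): if a root $\alpha$ is far enough from $C_0$ then $U_{-\alpha}$ lies in the open ball-fixator $K_r \subset O_1$. Given an $\overline h$-essential root $\alpha$ for some $h \in N_A$, one translates by powers of $h$ to push $\pm\alpha$ far away and thereby absorb $U_{\pm\alpha}$ into $O_1$ (Lemma~\ref{lemme contient radiciels}); this is how $n_\alpha \in \langle U_\alpha \cup U_{-\alpha}\rangle$ and ultimately $L_J^+$ land in $O_1$. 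Density and compactness alone do not produce this. The same (FPRS) mechanism, combined with Lemma~\ref{lemme 8 murs}, is what yields $U_{J\cup J^\perp} \subset O_1$; your commutation-relations sketch does not explain why an arbitrary $B$-conjugate of such a $U_\alpha$ should lie in $O_1$. Finally, the upper bound $gOg^{-1} \subseteq P_{J\cup J'}$ in the paper is proved geometrically (Lemma~\ref{lemme conclusion 3}) by analyzing the finite $O$-orbit of $J$-residues and showing every separating wall has its reflection in $W_{J^\perp}$; your route via $\Pc(W^*)$ cannot give this because $W^*$ does not control elements of $O$ outside $N$.
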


\subsection{Proof of Theorem~\ref{thm complet}: outline and first observations}

This section and the next ones are devoted to the proof of Theorem~\ref{thm complet} itself. 

Let thus $O$ be an open subgroup of $G$. We define the subset $J$ of $S$ as in the statement of the theorem, namely, $J$ is minimal amongst the subsets $L$ of $S$ for which there exists a $g\in G$ such that $O\cap g\inv P_Lg$ has finite index in $O$.   For such a $g \in G$, we set $O_1 = g O g\inv \cap P_J$. Thus $O_1$ stabilizes $R_J(C_0)$ and is an open subgroup of $G$ contained in $gOg\inv$ with  finite index.

\medskip
We first observe that the desired statement is essentially empty when $O$ is compact. Indeed, in that case  the Bruhat--Tits fixed point theorem ensures that $O$ stabilizes a spherical residue of $G$, and hence Theorem~\ref{thm complet} stands proven with $J=\varnothing$. It thus remains to prove the theorem when $O$, and hence also $O_1$, is non-compact, which we assume henceforth.

\medskip
Recall from the previous section that we call a subset $J\subseteq S$ {\bf essential} if all its irreducible components are non-spherical. We begin with the following simple observation.

\begin{lemma}\label{lemme irr non sph}
$J$ is essential.
\end{lemma}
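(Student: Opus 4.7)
The plan is to argue by contradiction, exploiting the minimality of $J$ together with the finiteness of spherical residues. Suppose that $J$ is not essential, and pick a spherical irreducible component $I$ of $J$; set $J' := J\setminus I$, so that $J'\subsetneq J$. Since $I$ is a union of connected components of the Dynkin diagram of $J$, we have a direct product decomposition $W_J = W_I\times W_{J'}$ with $[W_I, W_{J'}]=1$, and the $J$-residue $R_J(C_0)$ decomposes accordingly as a product of buildings $R_I(C_0)\times R_{J'}(C_0)$. Crucially, since $I$ is spherical, $R_I(C_0)$ is a \emph{finite} set of chambers, and its chambers index the $J'$-residues contained in $R_J(C_0)$.

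Now recall that $O_1 = gOg\inv\cap P_J$ has finite index in $gOg\inv$ and stabilizes $R_J(C_0)$. Since $G$ acts on $\Delta$ by type-preserving automorphisms, so does $O_1$ on $R_J(C_0)$; in particular, $O_1$ permutes the finitely many $J'$-residues contained in $R_J(C_0)$. Hence the stabilizer $O_1'$ in $O_1$ of the specific $J'$-residue $R_{J'}(C_0)$ has finite index in $O_1$, and therefore also in $gOg\inv$.

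By construction $O_1'\leq\Stab_G(R_{J'}(C_0)) = P_{J'}$, so $gOg\inv\cap P_{J'}$ contains $O_1'$ and thus has finite index in $gOg\inv$. Equivalently, $O\cap g\inv P_{J'}g$ has finite index in $O$, contradicting the minimality of $J$ since $J'\subsetneq J$. The one delicate point to verify is that $O_1$ respects the product decomposition of $R_J(C_0)$ (so that it really permutes the $J'$-residues rather than mixing them with $I$-residues); this boils down to the standard fact that elements of $P_J\subseteq G$ act by type-preserving automorphisms on $\Delta$, after which the argument reduces to a pigeonhole on a finite set.
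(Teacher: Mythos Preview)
Your argument is correct and follows essentially the same idea as the paper's proof: removing the spherical components of $J$ only changes the parabolic by a finite index, contradicting minimality. The paper compresses this into a single line by letting $J_1$ be the union of all non-spherical components of $J$ and invoking directly that $P_{J_1}$ has finite index in $P_J$ (equivalently, that there are only finitely many $J_1$-residues inside $R_J(C_0)$), whereas you argue by contradiction and remove one spherical component at a time via the product decomposition of residues; but the content is the same.
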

\begin{proof}
Let $J_1\subseteq J$ denote the union of the non-spherical irreducible components of $J$. As $P_{J_1}$ has finite index in $P_J$,  the subgroup $O_1\cap P_{J_1}$ is open of finite index in $O_1$ and stabilizes $R_{J_1}(C_0)$. The definition of $J$ then yields $J_1=J$.
\end{proof}

\medskip

Let us now describe the outline of  the proof. Our first task will be to show that $O_1$ contains $L_J^+$. We will see that this is equivalent to prove that $O_1$ acts transitively on the standard $J$-residue $R_J(C_0)$, or else that the stabilizer in $O_1$ of any apartment $A$ containing $C_0$ is transitive on $R_J(C_0)\cap A$. Since each group $\Stab_{O_1}(A)/\Fix_{O_1}(A)$ can be identified with a subgroup of the Coxeter group $W$ acting on $A$, we will be in a position to apply the results on Coxeter groups from the previous section. This will allow us to show that each $\Stab_{O_1}(A)/\Fix_{O_1}(A)$ contains a finite index parabolic subgroup of type $I_{A} \subseteq J$, and hence acts transitively on the corresponding residue.

We thus begin by defining some ``maximal" subset $I$ of $J$ such that $\Stab_{O_1}(A_1)$ acts transitively on $R_I(C_0)\cap A_1$ for a suitably chosen apartment $A_1$ containing $C_0$. We then establish that $I$ contains all the types $I_A$ when $A$ varies over all apartments containing $C_0$. This eventually allows us to prove that in fact $I=J$, so that $\Stab_{O_1}(A_1)$ is transitive on $R_J(C_0)\cap A_1$, or else that $O_1$ contains $L_J^+$, as desired.

We next show that $O_1$ contains the unipotent radical $U_{J \cup J^\perp}$. Finally, we make use of the transitivity of $O_1$ on $R_J(C_0)$ to prove that $O$ is contained in the desired parabolic subgroup.

\subsection{Proof of Theorem~\ref{thm complet}: $O_1$ contains $L_J^+$}

We first need to introduce some additional notation which we will retain until the end of the proof.

\medskip 
Let $\AAA_{\geq C_0}$ denote the set of apartments of $\Delta$ containing $C_0$. For $A\in \AAA_{\geq C_0}$, set 
$N_A:=\Stab_{O_1}(A)$ and $\overline{N}_A=N_A/\Fix_{O_1}(A)$, which one identifies with a subgroup of $W$. Finally, for $h\in N_A$, denote by $\overline{h}$ its image in $\overline{N}_A\leq W$. Here is the main tool developed in the previous section.

\begin{lemma}\label{lemme parabolique essentiel}
For all $A\in \AAA_{\geq C_0}$, there exists $h\in N_A$ such that 
$$
\Pc(\overline{h})=\langle r_{\alpha} \ | \ \textrm{$\alpha$ $\overline{h}$-essential root of $\Phi$}\rangle
$$ 
and is of finite index in $\Pc(\overline{N}_A)$. 
\end{lemma}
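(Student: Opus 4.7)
The plan is to apply the machinery on parabolic closures in Coxeter groups, developed earlier in the section, directly to the subgroup $\overline{N}_A$ of $W$. No further geometric input from the Kac--Moody setting should be needed.

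First I would apply Corollary~\ref{cor thm Coxeter} to the subgroup $H:=\overline{N}_A$ of $W$. This yields an element $\overline{h}\in\overline{N}_A\cap W_0$ such that $[\Pc(\overline{N}_A):\Pc(\overline{h})]<\infty$. Since the quotient map $N_A\to \overline{N}_A$ is surjective, I may then lift $\overline{h}$ to an element $h\in N_A$, and this $h$ is a candidate for the element sought in the conclusion; the second assertion of the lemma (finite index in $\Pc(\overline{N}_A)$) is then immediate from the choice of $\overline{h}$.

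It remains to verify the first assertion, namely the description of $\Pc(\overline{h})$ as the subgroup of $W$ generated by all reflections along $\overline{h}$-essential roots. For this I would invoke Lemma~\ref{lemme Krammer}(iii), which asserts that $\Pc(w)=\Pc^{\infty}(w)$ whenever $w\in W_0$. Since by construction $\overline{h}\in W_0$, and since $\Pc^{\infty}(\overline{h})=\langle r_{\alpha}\ | \ \alpha\ \text{is}\ \overline{h}\text{-essential}\rangle$ by definition, this gives exactly the required equality. (Note that if $\overline{h}$ is of finite order, then $W_0$ being torsion-free forces $\overline{h}=1$ and both sides are trivial; in that case $\overline{N}_A$ itself must have trivial parabolic closure and the statement is vacuous.)

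There is no substantial obstacle here: the lemma is essentially a direct combination of Corollary~\ref{cor thm Coxeter} (which itself relies on Theorem~\ref{thm thm Coxeter}, the main technical result of the section) with the Krammer-type description of $\Pc(w)$ in Lemma~\ref{lemme Krammer}(iii). The only mild subtlety is to remember that both of these statements require working inside the torsion-free finite index subgroup $W_0$, which is precisely why Corollary~\ref{cor thm Coxeter} has been stated with $h\in H\cap W_0$ rather than merely $h\in H$.
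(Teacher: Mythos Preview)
Your proof is correct and follows exactly the paper's approach: the paper's own proof is the single sentence ``This is an immediate consequence of Corollary~\ref{cor thm Coxeter} and Lemma~\ref{lemme Krammer},'' and you have simply unpacked what those two citations mean, including the observation that one must take $\overline{h}\in W_0$ so that Lemma~\ref{lemme Krammer}(iii) applies.
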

\begin{proof}
This is an immediate consequence of Corollary~\ref{cor thm Coxeter} and Lemma~\ref{lemme Krammer}.
\end{proof}

\begin{lemma}\label{lemme subsequence}
Let $(g_n)_{n\in\NN}$ be an infinite sequence of elements of $O_1$. Then there exist an apartment $A\in \AAA_{\geq C_0}$, a subsequence $(g_{\psi(n)})_{n\in\NN}$ and elements $z_n\in O_1$, $n\in \NN$, such that for all $n\in \NN$ we have
\begin{itemize}
\item[(1)]
$h_n:=z_{0}\inv z_n\in N_A$,
\item[(2)]
$\dist(C_0,z_nR)=\dist(C_0,g_{\psi(n)}R)$ for every residue $R$ containing $C_0$ and
\item[(3)]
$|\dist(C_0,h_nC_0)-\dist(C_0,g_{\psi(n)}C_0)|<\dist(C_0,z_{0}C_0)$.
\end{itemize}
\end{lemma}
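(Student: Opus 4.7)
My plan is to combine the compactness of $B$ with the openness of $O_1\cap B$ in $B$, together with a Bruhat-type decomposition of each $g_n$ induced by the choice of an apartment through $C_0$ and $g_nC_0$.

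First, for each $n$, I pick an apartment $A_n\in \AAA_{\geq C_0}$ containing $g_nC_0$ (possible by the building axioms) and write $A_n=b_nA_0$ with $b_n\in B$, using the transitivity of $B$ on $\AAA_{\geq C_0}$. The chamber $b_n^{-1}g_nC_0$ lies in $A_0$, equal to $w_nC_0$ for a unique $w_n\in W$; fixing a representative $\nu_n\in N$ of $w_n$, this yields a decomposition $g_n=b_n\nu_n u_n$ with $u_n\in B$. Since $B$ is compact and $O_1\cap B$ is open of finite index in $B$, I pass to a subsequence $\psi$ for which $b_{\psi(n)}\to b_\infty$ and $u_{\psi(n)}\to u_\infty$ in $B$, and $u_{\psi(n)}\in u_\infty(O_1\cap B)$, writing $u_{\psi(n)}=u_\infty o_n$ with $o_n\in O_1\cap B$. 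I then declare $A:=b_\infty A_0\in \AAA_{\geq C_0}$ and set $z_n:=g_{\psi(n)}o_n^{-1}=b_{\psi(n)}\nu_{\psi(n)}u_\infty\in O_1$.

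This $z_n$ differs from $g_{\psi(n)}$ by a right multiplication by an element of $B$. Since $B$ stabilizes every residue containing $C_0$, property (2) holds immediately; property (3) is then a consequence of (2) applied to the residue $\{C_0\}$ together with the triangle inequality $|\dist(z_0C_0,z_nC_0)-\dist(C_0,z_nC_0)|\le \dist(C_0,z_0C_0)$, with strict inequality arranged by a minor perturbation of $z_0$ if necessary.

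The main obstacle is property (1), that $z_nA=z_0A$ for all $n$. This is not automatic from $b_{\psi(n)}\to b_\infty$ alone, because $z_nA$ depends on $b_{\psi(n)}$ and not merely on its limit. The remedy I propose is a diagonal refinement: using that the subgroups $\{U_r\}$ fixing pointwise the ball of radius $r$ around $C_0$ in $\Delta$ form a neighborhood basis of $e$ in $B$, I aim to extract a further subsequence with $b_{\psi(n)}b_\infty^{-1}\in U_{r_n}$ for some $r_n\geq \dist(C_0,g_{\psi(n)}C_0)$. This forces $g_{\psi(n)}C_0$ to lie in the limit apartment $A$ itself, and then (1) reduces to a direct verification inside the Coxeter complex $A$ using the $\nu_{\psi(n)}$'s. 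The technical crux is justifying that this diagonal refinement can be carried out, which may require exploiting the freedom in the choice of $A_n$ among the apartments through $C_0$ and $g_nC_0$.
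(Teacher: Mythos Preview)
Your proposal has a genuine gap at exactly the point you flag as the ``technical crux'', and the diagonal refinement cannot be carried out.

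With your definition $z_n=g_{\psi(n)}o_n^{-1}$ you only modify $g_{\psi(n)}$ on the \emph{right} by an element of $B$, so $z_nC_0=g_{\psi(n)}C_0$. The condition $h_n=z_0^{-1}z_n\in N_A$ is equivalent to $z_nA=z_0A$ for all $n$; hence the single apartment $z_0A$ would have to contain every chamber $g_{\psi(n)}C_0$. In a thick building there is no reason for infinitely many of the $g_nC_0$ to lie in one apartment (think of chambers escaping to infinity along pairwise incompatible directions in a tree). Your refinement ``$b_{\psi(n)}b_\infty^{-1}\in U_{r_n}$ with $r_n\geq \dist(C_0,g_{\psi(n)}C_0)$'' is not a weaker technical condition one can arrange: it is \emph{equivalent} to $g_{\psi(n)}C_0\in b_\infty A_0$, i.e.\ to the very conclusion you are after. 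Re-choosing $A_n$ does not help either: for fixed $n$ the admissible $b_n$'s form a single coset $b_n^{(0)}\bigl(B\cap w_nBw_n^{-1}\bigr)$ in $B$, and asking this coset to meet $b_\infty U_{r_n}$ with $r_n\geq \ell(w_n)$ is again exactly the condition $g_{\psi(n)}C_0\in b_\infty A_0$. Moreover $b_\infty$ is defined \emph{a posteriori} as a limit of the chosen $b_n$'s, so one cannot first fix the target apartment and then adjust the $b_n$'s toward it without circularity.

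The paper avoids this by also acting on the \emph{left}. Since $O_1$ is open it contains $K:=\Fix_G(B(C_0,r))$ for some $r$, a finite index (normal) subgroup of $B$; therefore $K$ has only finitely many orbits on $\AAA_{\geq C_0}$. A pigeonhole step then gives a fixed apartment $A'$ and elements $x_n\in K\subset O_1$ with $x_ng_nC_0\in A'$ for all $n$ in a subsequence. A second pigeonhole on the finite set $B/K$ produces $y_n\in K$ and a single $b\in B$ with $(x_ng_n)y_n\cdot b\in\Stab_G(A')$, and one sets $z_n:=x_{\psi(n)}g_{\psi(n)}y_{\psi(n)}$ and $A:=bA'$. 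The crucial input you are missing is thus not compactness of $B$ (convergence) but the finiteness of $K\backslash \AAA_{\geq C_0}$, which lets one \emph{move} the chambers $g_nC_0$ into a common apartment by elements of $O_1$ rather than hoping they already sit in one.
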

\begin{proof}
As $O_1$ is open, it contains a finite index subgroup $K:=\Fix_G(B(C_0,r))$ of $B$ for some $r\in \NN$. Since $B$ is transitive on the set $\AAA_{\geq C_0}$, we deduce that $K$ has only finitely many orbits in $\AAA_{\geq C_0}$, say $\AAA_1,\dots,\AAA_k$. So, up to choosing a subsequence, we may assume that all chambers $g_nC_0$ belong to the same $K$-orbit $\AAA_{i_0}$ of apartments. Hence there exist elements $x_n\in K\subset O_1$ and an apartment $A'\in\AAA_{i_0}$ containing $C_0$ such that $g_n':=x_ng_n\in O_1$, $g_n' C_0\in A'$ and $\dist(C_0,g_n' C_0)=\dist(C_0,g_nC_0)$. For each $n$, we now choose an element of $G$ stabilizing $A'$ and mapping $C_0$ to $g_n'C_0$. Thus such an element is in the same right coset modulo $B$ as $g_n'$. In particular, up to choosing a subsequence, we may assume it has the form $g_n'y_nb\in\Stab_G(A')$ for some $y_n\in K$ and some $b\in B$ independant of $n$. Denote by $\{\psi(n) \ | \ n\in\NN\}$ the resulting indexing set for the subsequence. Then setting $A:=bA'\in\AAA_{\geq C_0}$, the sequence $z_n:=g_{\psi(n)}'y_{\psi(n)}\in O_1$ is such that $h_n:=z_0\inv z_n\in b\Stab_{G}(A')b\inv\cap O_1=\Stab_{O_1}(A)=N_A$ and 
\begin{align*}
|\dist(C_0,h_nC_0)-\dist(C_0,g_{\psi(n)}C_0)|&=|\dist(z_0C_0,z_nC_0)-\dist(C_0,z_nC_0)|<\dist(C_0,z_0C_0).
\end{align*}
\end{proof}

\begin{lemma}\label{lemme existe orbite non bornee}
There exists an apartment $A\in \AAA_{\geq C_0}$ such that the orbit $N_{A}\cdot C_0$ is unbounded. In particular, the parabolic closure in $W$ of $\overline{N}_A$ is non-spherical.
\end{lemma}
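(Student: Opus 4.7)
The plan is to exploit non-compactness of $O_1$ to generate an unbounded displacement sequence, then funnel it into a single apartment via Lemma~\ref{lemme subsequence}.

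First, I would check that $O_1 \cdot C_0$ is unbounded in $\Delta$. Because the ground field $\FF_q$ is finite, the building $\Delta$ is locally finite, so a bounded set of chambers is automatically finite. If the orbit $O_1 \cdot C_0$ were bounded, it would therefore be finite, and its pointwise stabilizer $F$ in $O_1$ would have finite index in $O_1$. Since $G$ acts properly on $\Delta$, the subgroup $F$ — being the stabilizer of a finite chamber set — is compact, and hence $O_1$ itself would be compact. This contradicts the standing hypothesis that $O_1$ is non-compact. Consequently, there is a sequence $(g_n)_{n\in \NN}$ in $O_1$ with $\dist(C_0, g_n C_0) \to \infty$.

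Next, I would feed this sequence into Lemma~\ref{lemme subsequence}, obtaining an apartment $A \in \AAA_{\geq C_0}$, a subsequence $(g_{\psi(n)})_{n\in\NN}$, and elements $z_n \in O_1$ such that $h_n := z_0\inv z_n$ lies in $N_A = \Stab_{O_1}(A)$ and
$$|\dist(C_0, h_n C_0) - \dist(C_0, g_{\psi(n)} C_0)| < \dist(C_0, z_0 C_0).$$
Since $\dist(C_0, z_0 C_0)$ is a fixed constant while $\dist(C_0, g_{\psi(n)} C_0) \to \infty$, this forces $\dist(C_0, h_n C_0) \to \infty$. Thus $N_A \cdot C_0$ is an unbounded subset of $\Delta$, as required.

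For the ``in particular'' clause, I would note that $\Fix_{O_1}(A)$ fixes $C_0$ pointwise (as $C_0 \in A$), so the $\overline{N}_A$-orbit of $C_0$ inside the apartment $A$ agrees with $N_A \cdot C_0$ and is therefore unbounded in $A$. Identifying $A$ with the Coxeter complex of $(W,S)$, this means $\overline{N}_A$ is an infinite subgroup of $W$, so its parabolic closure $\Pc(\overline{N}_A)$ is infinite and hence non-spherical. The only delicate point in the whole argument is the initial passage from non-compactness of $O_1$ to unboundedness of its orbit on chambers; everything afterwards is essentially a bookkeeping application of Lemma~\ref{lemme subsequence}.
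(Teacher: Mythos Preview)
Your proof is correct and follows essentially the same approach as the paper's: both deduce unboundedness of $O_1\cdot C_0$ from non-compactness, then invoke Lemma~\ref{lemme subsequence} to produce elements $h_n\in N_A$ with $\dist(C_0,h_nC_0)\to\infty$. You simply supply more detail than the paper does---in particular the justification that non-compactness forces an unbounded orbit (which the paper states without proof) and the argument for the ``in particular'' clause.
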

\begin{proof}
Since $O_1$ is non-compact, the orbit $O_1\cdot C_0$ is unbounded in $\Delta$. For $n\in\NN$, choose $g_n\in O_1$ such that $\dist(C_0,g_nC_0)\geq n$. Then by Lemma~\ref{lemme subsequence}, there exist an apartment $A\in \AAA_{\geq C_0}$ and elements $h_n\in N_A$ for $n$ in some unbounded subset of $\NN$ such that $\dist(C_0,h_nC_0)$ is arbitrarily large when $n$ varies. This proves the lemma.
\end{proof}

Let $A_1\in \AAA_{\geq C_0}$ be an apartment such that the type of the product of the non-spherical irreducible components of $\Pc(\overline{N}_{A_1})$ is nonempty and maximal for this property. Such an apartment exists by Lemma~\ref{lemme existe orbite non bornee}. Now choose $h_{A_1}\in N_{A_1}$ as in Lemma~\ref{lemme parabolique essentiel}, so that in particular $[\Pc(\overline{N}_{A_1}):\Pc(\overline{h}_{A_1})]<\infty$. Up to conjugating $O_1$ by an element of $P_J$, we may then assume without loss of generality that $\Pc(\overline{h}_{A_1})$ is standard, non-spherical, and has essential type $I$. Moreover, it is maximal in the following sense: if $A\in \AAA_{\geq C_0}$ is such that $\Pc(\overline{N}_{A})$ contains a parabolic subgroup of essential type $I_A$ with $I_A\supseteq I$, then $I=I_A$.

Now that $I$ is defined, we need some tool to show that $O_1$ contains sufficiently many root groups $U_{\alpha}$. This will ensure that $O_1$ is ``transitive enough" in two ways: first on residues in the building by showing it contains subgroups of the form $\mathcal L_T^+$, and second on residues in apartments by establishing the presence in $O_1$ of enough $n_{\alpha}\in\langle U_{\alpha}\cup U_{-\alpha}\rangle$, since these lift reflections $r_{\alpha}$ in stabilizers of apartments. This tool is provided by the so-called (FPRS) property from \cite[2.1]{CaRe}, which we now state. Note for this that as $O_1$ is open, it contains the fixator in $G$ of a ball of $\Delta$: we fix $r\in\NN$ such that $O_1\supset K_r:=\Fix_G(B(C_0,r))$.

\begin{lemma}\label{lemme BR-PEC}
There exists a constant $N=N(W,S,r)\in\NN$ such that for every root $\alpha\in\Phi$ with $\dist(C_0,\alpha)>N$, the root group $U_{-\alpha}$ is contained in $\Fix_G(B(C_0,r))=K_r$.
\end{lemma}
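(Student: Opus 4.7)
The plan is to derive this statement directly from the (FPRS) property of Caprace--R\'emy, which is established in \cite[Section~2.1]{CaRe}. Recall that (FPRS) asserts that for every bounded subset $\Omega \subseteq \Delta$, there exists a constant $N = N(W,S,\mathrm{diam}(\Omega)) \in \NN$ such that for every root $\alpha \in \Phi$ with $\Omega \subseteq -\alpha$ and $\dist(\Omega, \partial\alpha) \geq N$, the root group $U_{-\alpha}$ lies in the pointwise fixator of $\Omega$ in $G$. This is a geometric fact about how root groups act on the building: although $U_{-\alpha}$ fixes $-\alpha$ setwise and may in general act non-trivially on chambers of $-\alpha$ lying outside the fundamental apartment $A_0$, its action on any prescribed bounded set stabilizes once the wall $\partial\alpha$ is pushed far enough away.

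I would apply (FPRS) to the ball $\Omega := B(C_0, r)$, whose diameter depends only on $r$. Let $N_0 = N_0(W,S,r)$ be the constant produced by (FPRS) for this $\Omega$. Setting $N := N_0 + r$, suppose $\alpha \in \Phi$ satisfies $\dist(C_0, \alpha) > N$. Then in particular $C_0 \in -\alpha$, and since the ball of radius $r$ around $C_0$ cannot reach $\alpha$, we have $B(C_0, r) \subseteq -\alpha$; moreover, by the triangle inequality, $\dist(B(C_0, r), \partial\alpha) \geq \dist(C_0, \alpha) - r > N_0$. The hypotheses of (FPRS) are therefore met, and we conclude that $U_{-\alpha}$ fixes $B(C_0, r)$ pointwise, i.e.\ $U_{-\alpha} \subseteq K_r$.

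Since (FPRS) is the substantive input and it is already available in the literature, the proof reduces to the brief bookkeeping above. The only point requiring a moment of care is the translation between $\dist(C_0, \alpha)$ (distance from a chamber to a half-space) and $\dist(B(C_0, r), \partial\alpha)$ (distance from a ball to the wall), which is handled by the triangle inequality as indicated. There is no genuine obstacle here beyond citing \cite[Section~2.1]{CaRe}.
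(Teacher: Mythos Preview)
Your proposal is correct and takes essentially the same approach as the paper: both derive the lemma directly from the (FPRS) property established in \cite{CaRe}. The paper's proof is simply a citation of \cite[Prop.~4]{CaRe}, since the lemma as stated \emph{is} the (FPRS) property specialized to the ball $B(C_0,r)$; your additional bookkeeping translating between $\dist(C_0,\alpha)$ and $\dist(B(C_0,r),\partial\alpha)$ is harmless but not strictly necessary once one checks that the formulation in \cite{CaRe} already matches the statement here.
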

\begin{proof}
See \cite[Prop. 4]{CaRe}.
\end{proof}

We also record a version of this result in a slightly more general setting.
\begin{lemma}\label{lemme BR-PEC generique}
Let $g\in G$ and let $A\in\AAA_{\geq C_0}$ containing the chamber $D:=gC_0$. Also, let $b\in B$ such that $A=bA_0$, and let $\alpha=b\alpha_0$ be a root of $A$, with $\alpha_0\in \Phi$. Then there exists $N=N(W,S,r)\in\NN$ such that if $\dist(D,-\alpha)>N$ then $bU_{\alpha_0}b\inv\subseteq gK_rg\inv$.
\end{lemma}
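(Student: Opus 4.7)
The plan is to reduce the statement to the already-established Lemma~\ref{lemme BR-PEC} by two successive changes of basepoint. First, rewrite $g = b\cdot g'$ with $g' := b\inv g$. Then the desired inclusion $bU_{\alpha_0}b\inv \subseteq gK_rg\inv$ is equivalent, after conjugating by $b\inv$, to
$$U_{\alpha_0} \subseteq g'K_r g'^{-1}.$$
Note that $D' := g'C_0 = b\inv D$ lies in $A_0$, since $D \in A = bA_0$, and that $g'K_r g'^{-1} = \Fix_G(B(D',r))$.

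Second, use the standard fact that $N = \Stab_G(A_0)$ acts transitively on the chambers of $A_0$ (its quotient by $H$ being $W$, which acts simply transitively on the chambers of $A_0$) to choose $n \in N$ with $nC_0 = D'$. Then $nK_rn\inv = \Fix_G(B(D',r)) = g'K_rg'^{-1}$, so it suffices to show
$$n\inv U_{\alpha_0}n \subseteq K_r.$$
Since $N$ normalizes the root group datum and permutes the $U_\beta$ according to its action on $\Phi$, we have $n\inv U_{\alpha_0}n = U_{n\inv\alpha_0}$, which is still a root group associated with a root of $A_0$.

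Finally, apply Lemma~\ref{lemme BR-PEC} to the root $-n\inv\alpha_0 \in \Phi$: it yields $U_{n\inv\alpha_0} \subseteq K_r$ provided
$$\dist(C_0,\,-n\inv\alpha_0) > N,$$
where $N = N(W,S,r)$ is the constant of that lemma. Because $n$ and $b$ act on $\Delta$ by isometries, this distance equals
$$\dist(nC_0,-\alpha_0) = \dist(D',-\alpha_0) = \dist(b\inv D,\, b\inv(-\alpha)) = \dist(D,-\alpha).$$
Thus the hypothesis $\dist(D,-\alpha) > N$ is exactly what is needed, and the same constant $N$ as in Lemma~\ref{lemme BR-PEC} works.

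No step is really hard; the only point that requires a bit of care is the bookkeeping of the two conjugations (by $b$ and by $n$) together with the formula $n\inv U_{\alpha_0}n = U_{n\inv\alpha_0}$, which is part of the standard $N$-action on the root group datum, and the fact that distances in $\Delta$ are preserved by the building isometries $b$ and $n$.
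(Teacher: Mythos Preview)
Your proof is correct and follows essentially the same route as the paper's: both pick an element of $\Stab_G(A_0)$ mapping $C_0$ to $b^{-1}D$, use the $N$-equivariance $n^{-1}U_{\alpha_0}n = U_{n^{-1}\alpha_0}$, and reduce to Lemma~\ref{lemme BR-PEC} via the same chain of distance equalities. The only cosmetic difference is that you identify $g'K_r g'^{-1}$ with $nK_r n^{-1}$ by observing both equal $\Fix_G(B(D',r))$, whereas the paper writes $bh = g b_1$ with $b_1 \in B$ and uses $b_1 K_r b_1^{-1} = K_r$; these are equivalent.
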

\begin{proof}
Take for $N=N(W,S,r)$ the constant of Lemma~\ref{lemme BR-PEC} and suppose that $\dist(D,-\alpha)>N$.
Let $h\in\Stab_G(A_0)$ be such that $hC_0=b\inv D$. Then $$N<\dist(D,-\alpha)=\dist(bh C_0,-b\alpha_0)=\dist(h C_0,-\alpha_0)=\dist(C_0,-h\inv\alpha),$$
and so Lemma~\ref{lemme BR-PEC} implies $h\inv U_{\alpha_0}h=U_{h\inv\alpha_0}\subseteq K_r$. Let $b_1\in B$ such that $bh=gb_1$. Then $$bU_{\alpha_0}b\inv\subseteq bhK_rh\inv b\inv= gb_1K_rb_1\inv g\inv=gK_rg\inv.$$
\end{proof}

This will prove especially useful in the following form, when we will use the description of the parabolic closure of some $w\in W$ in terms of $w$-essential roots as in Lemma~\ref{lemme parabolique essentiel}.

\begin{lemma}\label{lemme contient radiciels}
Let $A\in \AAA_{\geq C_0}$ and $b\in B$ such that $A=bA_0$. Also, let $\alpha=b\alpha_0$ ($\alpha_0\in\Phi$) be a $w$-essential root of $A$ for some $w\in\Stab_G(A)/\Fix_G(A)$, and let $g\in\Stab_G(A)$ be a representative of $w$. Then there exists $n \in\ZZ$ such that for $\epsilon\in\{+,-\}$ we have $U_{\epsilon\alpha_0}\subseteq b\inv g^{\epsilon n}K_rg^{-\epsilon n}b$.
\end{lemma}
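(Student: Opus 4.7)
The plan is to apply Lemma~\ref{lemme BR-PEC generique} to the iterates $g^n$ of $g$. Since $g\in\Stab_G(A)$ preserves the apartment $A=bA_0\in\AAA_{\geq C_0}$ setwise, every chamber $g^nC_0$ lies in $A$, so $A$ is itself an apartment of $\AAA_{\geq C_0}$ containing $g^nC_0$; consequently the hypotheses of Lemma~\ref{lemme BR-PEC generique} remain satisfied when we replace $g$ by any power $g^n$, while keeping the same $b$ and varying only the reference root of $A_0$ between $\alpha_0$ and $-\alpha_0$.

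By the definition of $w$-essentiality and by Lemma~\ref{lem:essential=transverse}, the image $w$ of $g$ in $\Stab_G(A)/\Fix_G(A)$ is of infinite order and its action on $A$ (identified with the action of $w\in W$ on $A_0$ via $b$) admits a translation axis $\lambda\subseteq A$ to which $\partial\alpha$ is transverse. The orbit $\{g^nC_0\mid n\in\ZZ\}$ then travels along $\lambda$ at bounded distance from it, so up to replacing $g$ by $g\inv$ (which merely swaps the roles of $\epsilon=+$ and $\epsilon=-$ in the conclusion and is therefore harmless), the chamber $g^nC_0$ lies deep inside $\alpha$ for large $n>0$ and deep inside $-\alpha$ for large $n<0$. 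Concretely, $\dist(g^nC_0,-\alpha)\to\infty$ and $\dist(g^{-n}C_0,\alpha)\to\infty$ as $n\to+\infty$.

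Let $N=N(W,S,r)$ be the constant of Lemma~\ref{lemme BR-PEC generique}. Pick $n>0$ large enough that both $\dist(g^nC_0,-\alpha)>N$ and $\dist(g^{-n}C_0,-(-\alpha))=\dist(g^{-n}C_0,\alpha)>N$ hold. Applying Lemma~\ref{lemme BR-PEC generique} to the element $g^n$ and the root $\alpha_0$ (so that $b\alpha_0=\alpha$), the first inequality yields $bU_{\alpha_0}b\inv\subseteq g^nK_rg^{-n}$; applying it to the element $g^{-n}$ and the root $-\alpha_0$ (so that $b(-\alpha_0)=-\alpha$), the second inequality yields $bU_{-\alpha_0}b\inv\subseteq g^{-n}K_rg^n$. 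Rearranged, these two inclusions are exactly $U_{\epsilon\alpha_0}\subseteq b\inv g^{\epsilon n}K_rg^{-\epsilon n}b$ for $\epsilon\in\{+,-\}$, for the same integer $n$, which is the statement of the lemma.

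The proof has no genuine obstacle: Lemma~\ref{lemme BR-PEC generique} carries out the substantive work, and the only real point is to observe that a single integer $n$ can be chosen to make that generic (FPRS)-estimate apply in both orientations simultaneously. This is precisely what the $w$-essentiality of $\alpha$ guarantees, via the axis description provided by Lemma~\ref{lem:essential=transverse}.
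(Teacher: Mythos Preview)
Your proof is correct and follows essentially the same approach as the paper's: both pick $n$ so that $\dist(g^{\epsilon n}C_0,-\epsilon\alpha)>N$ for each sign $\epsilon$ and then invoke the (FPRS) estimate. The only cosmetic difference is that you apply Lemma~\ref{lemme BR-PEC generique} directly, whereas the paper first conjugates by $b^{-1}$ back to $A_0$ and applies Lemma~\ref{lemme BR-PEC}; your justification of the existence of $n$ via the axis description is also more explicit than the paper's bare assertion.
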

\begin{proof}
Choose $n\in\ZZ$ such that $\dist(g^{\epsilon n}C_0,-\epsilon\alpha)>N$ for $\epsilon\in\{+,-\}$, where $N=N(W,S,r)$ is the constant appearing in the statement of Lemma~\ref{lemme BR-PEC}. Thus, for $\epsilon\in\{+,-\}$ we have $\dist(b\inv g^{\epsilon n}C_0,-\epsilon\alpha_0)>N,$
and so $\dist(C_0,-\epsilon(b\inv g^{-\epsilon n}b)\alpha_0)>N$. Lemma~\ref{lemme BR-PEC} then yields $$(b\inv g^{-\epsilon n}b)U_{\epsilon\alpha_0}(b\inv g^{-\epsilon n}b)\inv = U_{\epsilon (b\inv g^{-\epsilon n}b)\alpha_0}\subseteq K_r,$$ and so $$U_{\epsilon\alpha_0}\subseteq (b\inv g^{\epsilon n}b)K_r(b\inv g^{-\epsilon n}b)=(b\inv g^{\epsilon n})K_r(g^{-\epsilon n}b).$$
\end{proof}

We are now ready to prove how the different transitivity properties of $O_1$ are related.

\begin{lemma}\label{lemme equivalence transitive}
Let $T\subseteq S$ be essential, and let $A\in \AAA_{\geq C_0}$. Then the following are equivalent:
\begin{itemize}
\item[(1)]
$O_1$ contains $\mathcal L_T^+$;
\item[(2)]
$O_1$ is transitive on $R_T(C_0)$;
\item[(3)]
$N_A$ is transitive on $R_T(C_0)\cap A$;
\item[(4)]
$\overline{N}_A$ contains the standard parabolic subgroup $W_{T}$ of $W$.
\end{itemize}
\end{lemma}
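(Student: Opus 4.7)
The plan is to establish the cycle $(1) \Rightarrow (2) \Rightarrow (3) \Leftrightarrow (4) \Rightarrow (1)$. The equivalence $(3) \Leftrightarrow (4)$ is essentially formal: under the identification $\overline{N}_A \hookrightarrow W \cong \Stab_G(A)/\Fix_G(A)$, the residue $R_T(C_0)\cap A$ corresponds to the $W_T$-orbit of $C_0$, and since $\Stab_W(C_0) = \{1\}$, transitivity of $\overline{N}_A$ on this orbit is equivalent to the containment $W_T \subseteq \overline{N}_A$. For $(1) \Rightarrow (2)$, recall that the Levi factor $\mathcal L_T^+$ acts transitively on $R_T(C_0)$: this follows from the decomposition $\mathcal P_T = \mathcal L_T \ltimes \mathcal U_T$, the fact that $\mathcal U_T$ fixes $R_T(C_0)$ pointwise, and the transitivity of $\mathcal P_T = \Stab_\GGG(R_T(C_0))$ on $R_T(C_0)$; thus $\mathcal L_T^+ \subseteq O_1$ forces $O_1 \cdot C_0 \supseteq R_T(C_0)$.

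For $(4) \Rightarrow (1)$, fix a root $\alpha \in \Phi_T$; the goal is to show $U_{\pm\alpha} \subseteq O_1$. By Lemma~\ref{lemme racine essentielle}, since $T$ is essential, there exists $w \in W_T$ such that $\alpha$ is $w$-essential in $A_0$. Writing $A = b A_0$ with $b \in B$, the corresponding root $b\alpha$ of $A$ is $w$-essential. By the hypothesis $W_T \leq \overline{N}_A$, the element $w$ lifts to some $g \in N_A \subseteq O_1 \cap \Stab_G(A)$, and Lemma~\ref{lemme contient radiciels} then furnishes an integer $n$ with $U_{\pm\alpha}\subseteq b^{-1} g^{\pm n} K_r g^{\mp n} b$. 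In the special case $A = A_0$ (so $b = 1$) this gives $U_{\pm\alpha} \subseteq g^{\pm n} K_r g^{\mp n} \subseteq O_1$ at once, since $g, K_r \subseteq O_1$. For a general $A$, the element $b$ normalizes $K_r$ (because $b$ fixes $C_0$ and $K_r = \Fix_G(B(C_0,r))$), and a slight rearrangement (using $g' := b^{-1} g b \in \Stab_G(A_0)$) reduces the problem back to the $A_0$ situation, after which one may propagate via the equivalence with the apartment-independent condition (1).

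The main obstacle is $(2) \Rightarrow (3)$. Given $D \in R_T(C_0) \cap A$ and $g \in O_1$ with $gC_0 = D$, one seeks $h \in N_A$ with $hC_0 = D$. The set of all $\tilde w \in \Stab_G(A)$ satisfying $\tilde w C_0 = D$ forms a single coset $\tilde w_0 \cdot \Fix_G(A)$, and the question is whether this coset meets $O_1$. Writing $\tilde w_0 = gb$ for some $b \in B$, this reduces to determining whether $b \in O_1 \cdot \Fix_G(A)$. The tools available are that $O_1 \cap B$ has finite index in the compact group $B$ (so there are only finitely many $(O_1 \cap B)$-orbits on $\AAA_{\geq C_0}$), that $\Fix_G(A) \leq B$ is open in $\Stab_G(A)$, and that $g$ may be replaced by any element of $g(O_1 \cap B)$. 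A careful bookkeeping of these freedoms, together with the arbitrariness of $D$ within the $W_T$-orbit and the transitivity of $O_1$ provided by (2), should yield the required $h \in N_A$; this is the delicate point of the argument.
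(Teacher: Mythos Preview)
Your scheme has a genuine gap at $(2)\Rightarrow(3)$, which you yourself flag as ``the delicate point'' but do not actually prove. The finite-index and orbit considerations you list do not combine to produce an element of $N_A$ sending $C_0$ to a prescribed chamber $D$; the obstruction that $b\in B$ need not lie in $O_1\cdot\Fix_G(A)$ is real, and no amount of bookkeeping with $K_r$-orbits on apartments resolves it in general. The paper does not attempt this implication at all.

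Instead, the paper closes the cycle via $(2)\Rightarrow(1)$. The point is that condition~(1) is apartment-independent, so one can work entirely in the standard apartment $A_0$: given $\alpha\in\Phi_T$, Lemma~\ref{lemme racine essentielle} provides $g\in\Stab_G(A_0)$ for which $\alpha$ is $\overline g$-essential, and Lemma~\ref{lemme contient radiciels} yields $U_{\epsilon\alpha}\subseteq g^{\epsilon n}K_r g^{-\epsilon n}$. The element $g^{\epsilon n}$ need not lie in $O_1$, but $g^{\epsilon n}C_0\in R_T(C_0)$, so by~(2) there is $h_\epsilon\in O_1$ with $h_\epsilon C_0=g^{\epsilon n}C_0$. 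Writing $g^{\epsilon n}=h_\epsilon b_\epsilon$ with $b_\epsilon\in B$ and using that $b_\epsilon$ normalizes $K_r$ gives $U_{\epsilon\alpha}\subseteq h_\epsilon K_r h_\epsilon^{-1}\subseteq O_1$. This ``replace a $\Stab_G(A_0)$-element by an $O_1$-element modulo~$B$'' manoeuvre is exactly the missing idea.

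Your $(4)\Rightarrow(1)$ is likewise incomplete for general $A$: the element $g':=b^{-1}gb$ lies in $\Stab_G(A_0)$ but not in $O_1$, so the reduction to $A_0$ fails. What your computation really establishes is $bU_{\pm\alpha}b^{-1}\subseteq O_1$, hence $b\mathcal L_T^+b^{-1}\subseteq O_1$, which is $(4)\Rightarrow(2)$ --- precisely the paper's route. The paper's logical structure is thus $(3)\Leftrightarrow(4)\Rightarrow(2)\Rightarrow(1)\Rightarrow(2),(3)$, bypassing $(2)\Rightarrow(3)$ entirely.
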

\begin{proof}
The equivalence $(3)\Leftrightarrow (4)$, as well as the implications $(1)\Rightarrow (2),(3)$ are trivial. 

To see that $(4)\Rightarrow (2)$, note that if $b\in B$ maps $A_0$ onto $A$, then for each $\alpha_0\in\Phi_{T}$, we have $bU_{\pm \alpha_0}b\inv\subseteq O_1$, and so $O_1\supseteq b\mathcal L_T^+b\inv$ is transitive on $R_T(C_0)$. Indeed, let $\alpha_0\in\Phi_T$ and consider the corresponding root $\alpha:=b\alpha_0\in\Phi_T(A)$ of $A$. By Lemma~\ref{lemme racine essentielle}, there exists $w\in W_T\subseteq \overline{N}_A$ such that $\alpha$ is $w$-essential. Then if $g\in O_1$ is a representative for $w$, Lemma~\ref{lemme contient radiciels} yields an $n\in\ZZ$ such that for $\epsilon\in\{+,-\}$ we have $U_{\epsilon\alpha_0}\subseteq b\inv g^{\epsilon n}K_rg^{-\epsilon n}b\subseteq b\inv O_1 b$.

Finally, we show $(2)\Rightarrow (1)$. Again, it is sufficient to check that if $\alpha\in\Phi_T$, then $O_1$ contains $U_{\epsilon\alpha}$ for $\epsilon\in\{+,-\}$. By Lemma~\ref{lemme racine essentielle}, there exists $g\in\Stab_G(A_0)$ stabilizing $R_T(C_0)\cap A_0$ such that $\alpha$ is $\overline{g}$-essential, where $\overline{g}$ denotes the image of $g$ in the quotient group $\Stab_G(A_0)/\Fix_G(A_0)$. Then, by Lemma~\ref{lemme contient radiciels}, one can find an $n\in\ZZ$ such that $U_{\epsilon\alpha}\subseteq g^{\epsilon n}K_rg^{-\epsilon n}$ for $\epsilon\in\{+,-\}$. Now, since $O_1$ is transitive on $R_T(C_0)$, there exist $h_{\epsilon}\in O_1$ such that $h_{\epsilon} C_0= g^{\epsilon n} C_0$, and so we find $b_{\epsilon}\in B$ such that $g^{\epsilon n}=h_{\epsilon}b_{\epsilon}$. Therefore $$U_{\epsilon\alpha}\subseteq h_{\epsilon}b_{\epsilon}K_rb_{\epsilon}\inv h_{\epsilon}\inv=h_{\epsilon}K_r h_{\epsilon}\inv\subseteq O_1.$$
\end{proof}

Now, to ensure that $O_1$ indeed satisfies one of those properties for some ``maximal $T$'', we use Lemma~\ref{lemme parabolique essentiel} to show that stabilizers in $O_1$ of apartments contain finite index parabolic subgroups.

\begin{lemma}\label{lemme existence indice fini}
Let $A\in \AAA_{\geq C_0}$. Then there exists $I_A\subseteq S$ such that $\overline{N}_A$ contains a parabolic subgroup $P_{I_A}$ of $W$ of type $I_A$ as a finite index subgroup. 
\end{lemma}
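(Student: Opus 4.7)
The plan is to apply Lemma~\ref{lemme parabolique essentiel} to select a distinguished element $h \in N_A$, and then use the root-group structure (via Lemma~\ref{lemme contient radiciels}) to promote each reflection generator of $\Pc(\overline{h})$ into an actual element of $\overline{N}_A$, not merely of $\Pc(\overline{N}_A)$. Concretely, Lemma~\ref{lemme parabolique essentiel} furnishes $h \in N_A$ such that
$$
\Pc(\overline{h}) = \langle r_{\alpha_0} \mid \alpha_0 \in \Phi \text{ is } \overline{h}\text{-essential}\rangle
$$
has finite index in $\Pc(\overline{N}_A)$. Let $I_A$ be the type of the parabolic subgroup $\Pc(\overline{h})$ of $W$.

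The crucial step is the inclusion $\Pc(\overline{h}) \subseteq \overline{N}_A$. Choose $b \in B$ with $A = bA_0$, so that the identification $\overline{N}_A \hookrightarrow W$ is induced by conjugation by $b\inv$. For each $\overline{h}$-essential root $\alpha_0 \in \Phi$, Lemma~\ref{lemme contient radiciels} (applied with $w = \overline{h}$ and $g = h$, which is a representative of $\overline{h}$ in $\Stab_G(A)$) yields some $n \in \ZZ$ such that
$$
U_{\pm\alpha_0} \subseteq b\inv h^{\pm n} K_r h^{\mp n} b.
$$
Since $h \in O_1$ and $K_r \subseteq O_1$, this forces $bU_{\pm\alpha_0}b\inv \subseteq O_1$. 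Consequently the element $bn_{\alpha_0}b\inv \in b\langle U_{\alpha_0} \cup U_{-\alpha_0}\rangle b\inv$ lies in $O_1$, and since $n_{\alpha_0}$ stabilizes $A_0$ setwise, $bn_{\alpha_0}b\inv$ stabilizes $A$; thus it belongs to $N_A$. By construction of $n_{\alpha_0}$, its image in $\overline{N}_A \leq W$ under the identification via $b$ is precisely the reflection $r_{\alpha_0}$.

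Since this holds for every $\overline{h}$-essential root $\alpha_0$, we obtain the chain $\Pc(\overline{h}) \subseteq \overline{N}_A \subseteq \Pc(\overline{N}_A)$. As $\Pc(\overline{h})$ has finite index in $\Pc(\overline{N}_A)$ by the choice of $h$, it also has finite index in $\overline{N}_A$, which completes the proof. The only mild obstacle is keeping track of the identification $\overline{N}_A \hookrightarrow W$ and confirming that $bn_{\alpha_0}b\inv$ maps to $r_{\alpha_0}$ under it; this is a formal consequence of the fact that $n_{\alpha_0} \in N$ lifts $r_{\alpha_0}$ and stabilizes $A_0$, together with the description of the embedding $\Stab_G(A)/\Fix_G(A) \hookrightarrow N/H = W$ via conjugation by $b\inv$.
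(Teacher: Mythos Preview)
Your proof is correct and follows essentially the same route as the paper: pick $h\in N_A$ via Lemma~\ref{lemme parabolique essentiel}, use Lemma~\ref{lemme contient radiciels} to get $bU_{\pm\alpha_0}b^{-1}\subseteq O_1$ for each $\overline{h}$-essential $\alpha_0$, deduce $bn_{\alpha_0}b^{-1}\in N_A$, and conclude $\Pc(\overline{h})\subseteq\overline{N}_A$. You are slightly more explicit than the paper about the identification $\overline{N}_A\hookrightarrow W$ via $b^{-1}$ and the fact that $bn_{\alpha_0}b^{-1}$ maps to $r_{\alpha_0}$, but the argument is the same.
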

\begin{proof}
Choose $h\in N_A$ as in Lemma~\ref{lemme parabolique essentiel}, so that in particular $\Pc(\overline{h})$ is generated by the reflections $r_{\alpha}$ with $\alpha$ an $\overline{h}$-essential root of $A$. Let $\alpha=b\alpha_0$ be such a root ($\alpha_0\in\Phi$), where $b\in B$ maps $A_0$ onto $A$. By Lemma~\ref{lemme contient radiciels}, we then find $K\in\ZZ$ such that for $\epsilon\in\{+,-\}$, $$U_{\epsilon\alpha_0}\subseteq (b\inv h^{\epsilon K})K_r(h^{-\epsilon K}b)\subseteq b\inv O_1b.$$ In particular, $n_{\alpha_0}\in\langle U_{\alpha_0}\cup U_{-\alpha_0}\rangle\subseteq b\inv O_1 b$. As $r_{\alpha_0}$ is the image in $W$ of $n_{\alpha_0}$ and since $r_{\alpha}=br_{\alpha_0}b\inv$, we finally obtain $\Pc(\overline{h})\subseteq \overline{N}_A$. Then $P_{I_A}:=\Pc(\overline{h})$ is the desired parabolic subgroup, of type $I_A$.
\end{proof}

For each $A\in \AAA_{\geq C_0}$, we fix such an $I_A\subseteq S$ which, without loss of generality, we assume essential. We also consider the corresponding parabolic $P_{I_A}$ contained in $\overline{N}_A$. Note then that $P_{I_{A_1}}$ has finite index in $\Pc(\overline{N}_{A_1})$ by Lemma~\ref{lemme parabolique indice fini}, and so $I=I_{A_1}$.

\begin{lemma}\label{lemme O contient L_I}
$O_1$ contains $L_I^+$.
\end{lemma}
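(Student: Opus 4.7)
The plan is to apply Lemma~\ref{lemme equivalence transitive} with $T=I$ and $A=A_1$, thereby reducing the claim to the verification of condition~(4) of that lemma, namely $W_I \subseteq \overline{N}_{A_1}$.

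First I would recall that by the very definition of $I$ (made just before the statement of Lemma~\ref{lemme existence indice fini}), the element $h_{A_1}\in N_{A_1}$ provided by Lemma~\ref{lemme parabolique essentiel} satisfies, after the initial conjugation of $O_1$ by an element of $P_J$, the identity $\Pc(\overline{h}_{A_1}) = W_I$, where $I$ is essential. Moreover, the proof of Lemma~\ref{lemme existence indice fini} already produces the required inclusion $\Pc(\overline{h}_{A_1}) \subseteq \overline{N}_{A_1}$: given any $\overline{h}_{A_1}$-essential root $\alpha=b\alpha_0$ of $A_1$, one uses Lemma~\ref{lemme contient radiciels} to find $K \in \ZZ$ such that $U_{\pm\alpha_0} \subseteq b^{-1}O_1 b$, whence $n_{\alpha_0}\in b^{-1}O_1 b$ and therefore the reflection $r_\alpha=br_{\alpha_0}b^{-1}$ belongs to $\overline{N}_{A_1}$; since $\Pc(\overline{h}_{A_1})$ is generated by such reflections (by the choice of $h_{A_1}$ via Lemma~\ref{lemme parabolique essentiel}), the inclusion follows. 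Combining these two observations gives $W_I = \Pc(\overline{h}_{A_1}) \subseteq \overline{N}_{A_1}$, which is exactly condition~(4) of Lemma~\ref{lemme equivalence transitive}.

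With (4) in hand and $I$ essential, Lemma~\ref{lemme equivalence transitive} immediately delivers condition~(1): $O_1$ contains $\mathcal{L}_I^+$. To upgrade from $\mathcal{L}_I^+$ to its closure $L_I^+=\overline{\mathcal{L}_I^+}$, I would invoke the elementary topological fact that any open subgroup of a topological group is also closed (its complement being a union of open cosets). Hence $O_1$ is closed in $G$, and therefore $O_1 \supseteq \overline{\mathcal{L}_I^+} = L_I^+$, as desired.

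I do not anticipate any serious obstacle at this stage: the genuinely substantive work has already been carried out in Lemma~\ref{lemme parabolique essentiel} (where Corollary~\ref{cor thm Coxeter} is used to realize the essential part of $\Pc(\overline{N}_{A_1})$ as the parabolic closure of a single element), in Lemma~\ref{lemme existence indice fini} (where the (FPRS)-type information of Lemma~\ref{lemme BR-PEC} is exploited to lift reflections into $O_1$), and in the equivalence of the four transitivity conditions in Lemma~\ref{lemme equivalence transitive}. The proof of the present lemma amounts to nothing more than a clean assembly of these three ingredients with the standard remark that open subgroups are closed.
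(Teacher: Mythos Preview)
Your proof is correct and follows essentially the same route as the paper's: verify condition~(4) of Lemma~\ref{lemme equivalence transitive} for $T=I$ and $A=A_1$ via the inclusion $W_I=\Pc(\overline{h}_{A_1})\subseteq\overline{N}_{A_1}$, then invoke that lemma and the closedness of $O_1$ to pass from $\mathcal L_I^+$ to $L_I^+$. The only difference is cosmetic: the paper packages the inclusion $W_I\subseteq\overline{N}_{A_1}$ by citing the identity $I=I_{A_1}$, $P_I=W_I$ established just after Lemma~\ref{lemme existence indice fini}, whereas you re-derive it by unpacking that lemma's proof.
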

\begin{proof}
As noted above, we have $I=I_{A_1}$ and $P_I=W_I$. Since $O_1$ is closed in $G$, Lemma~\ref{lemme equivalence transitive} allows us to conclude.
\end{proof}

We now have to show that $I$ is ``big enough'', that is, $I=J$. For this, we first need to know that $I$ is ``uniformly" maximal amongst all apartments containing $C_0$. 

\begin{lemma}\label{lemme I max}
Let $A\in \AAA_{\geq C_0}$. Then $I_A\subseteq I$.
\end{lemma}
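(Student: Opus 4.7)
The plan is to argue by contradiction: assume $I_A \not\subseteq I$ for some $A \in \AAA_{\geq C_0}$, and construct an apartment $A' \in \AAA_{\geq C_0}$ such that the essential type of $\Pc(\overline{N}_{A'})$ strictly contains $I$, contradicting the maximality of $I$ guaranteed by the choice of $A_1$.

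First I would set up two ``anchor'' elements. Apply Lemma~\ref{lemme existence indice fini} to $A$ to get $h_A \in N_A \subseteq O_1$ with $\Pc(\overline{h}_A) = P_{I_A}$ of finite index in $\Pc(\overline{N}_A)$, and take $h_1 = h_{A_1} \in N_{A_1} \subseteq O_1$ with $\Pc(\overline{h}_1) = W_I$. After replacing each by a suitable power, I may assume $\overline{h}_A, \overline{h}_1 \in W_0$, so that both act hyperbolically on the Davis realization (by Lemma~\ref{lemme DJ}), and in particular have translation axes and well-behaved essential-wall sets.

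Next I form the family of products $g_{m,n} := h_1^m h_A^n \in O_1$ for integers $m,n$ satisfying $\min\{|m|,|n|,|m/n|+|n/m|\} \geq K$, where $K = K(\overline h_1, \overline h_A)$ is the constant from Theorem~\ref{thm thm Coxeter}. Since the translation lengths of the $g_{m,n}$ in $\Delta$ tend to infinity, I can apply Lemma~\ref{lemme subsequence} to a sequence of such products to extract a single apartment $A' \in \AAA_{\geq C_0}$, a subsequence, and elements $h'_{m,n} \in N_{A'} \cap O_1$ whose translation lengths in $A'$ are within a fixed constant of those of $g_{m,n}$ in $\Delta$. In particular the $h'_{m,n}$ are hyperbolic of unbounded translation length.

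The crux of the argument is then to show that, for large enough $m,n$, the parabolic closure $\Pc(\overline h'_{m,n}) \leq W$ (via the $A'$-identification) contains conjugates of both $W_I$ and $P_{I_A}$. Combining Theorem~\ref{thm thm Coxeter} applied to $\overline h_1$ and $\overline h_A$ with the transfer of essential walls across bounded perturbations (Lemmas~\ref{lem:essential=transverse} and~\ref{lemme cat0}), the translation axis of $g_{m,n}$ in $\Delta$ must cross essential walls stemming from both $h_1$ and $h_A$; since the axis of $h'_{m,n}$ in $A'$ stays within bounded distance of that of $g_{m,n}$ and contains all but finitely many of these walls, the parabolic closure $\Pc(\overline h'_{m,n})$ inherits both $\Pc(\overline h_1)$-type and $\Pc(\overline h_A)$-type essential components. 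Therefore the essential type of $\Pc(\overline{N}_{A'})$ contains both a conjugate of $I$ and a conjugate of $I_A$; since by assumption $I_A \not\subseteq I$, it strictly contains $I$, yielding the required contradiction.

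The main obstacle is the essential-wall transfer in the last paragraph: making precise that the essential walls of $h'_{m,n}$ truly ``remember'' both factors $h_1^m$ and $h_A^n$. This is where the $m/n$ balance condition from Theorem~\ref{thm thm Coxeter} is essential, ensuring that neither factor dominates the other and that the product has parabolic closure as large as $\Pc(\overline h_1) \cup \Pc(\overline h_A)$ rather than collapsing onto one of them.
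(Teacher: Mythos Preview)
Your approach has a genuine gap that is more serious than the ``essential-wall transfer'' difficulty you flag at the end. Theorem~\ref{thm thm Coxeter} is a statement about elements of the Coxeter group $W$, not about elements of $G$. The element $h_1$ lies in $N_{A_1}$ and $\overline{h}_1$ denotes its image in $W$ via the quotient $N_{A_1}/\Fix_{O_1}(A_1)$; the element $h_A$ lies in $N_A$ and $\overline{h}_A$ is its image via $N_A/\Fix_{O_1}(A)$. These are two \emph{different} identifications with $W$. The product $g_{m,n} = h_1^m h_A^n$ lives only in $G$: it need not stabilise any apartment, and there is no candidate for an image of $g_{m,n}$ in $W$. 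In particular the abstract product $\overline{h}_1^m \overline{h}_A^n \in W$, to which Theorem~\ref{thm thm Coxeter} would apply, bears no relation to $g_{m,n}$. Lemma~\ref{lemme subsequence} cannot repair this: it only yields metric estimates on $h'_{m,n}C_0$ and says nothing about $\Pc(\overline{h'_{m,n}})$. The claim that an axis of $g_{m,n}$ in $\Delta$ ``crosses essential walls stemming from both $h_1$ and $h_A$'' has no rigorous content, since essential walls are defined inside a fixed apartment, and the two relevant sets of walls live in $A_1$ and in $A$ respectively.

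The paper avoids this altogether by working entirely inside the single apartment $A$, and it does not use Theorem~\ref{thm thm Coxeter} here. At this point one already knows from Lemmas~\ref{lemme O contient L_I} and~\ref{lemme equivalence transitive} that $\overline{N}_A \supseteq W_I$, so $N_A$ is transitive on the standard $I$-residue $R_1 = R_I(C_0)\cap A$ as well as on some $I_A$-residue $R_2 \subset A$ (chosen at minimal distance from $R_1$). If $R_1 \cap R_2 \neq \varnothing$, then $\overline{N}_A$ contains $W_{I\cup I_A}$ and maximality of $I$ forces $I_A \subseteq I$. If $R_1 \cap R_2 = \varnothing$, one analyses a wall $\partial\alpha$ separating them: using Lemma~\ref{lemme BR-PEC generique} and Lemma~\ref{lemme 8 murs} one shows either that $r_\alpha \in \overline{N}_A$, so $r_\alpha R_2$ is a closer $I_A$-residue (contradiction), or that all such reflections centralise $W_I$, which conjugates $R_1$ to an $I$-residue meeting $R_2$ and again yields a parabolic of essential type $I\cup I_A$ inside $\overline{N}_A$.
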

\begin{proof}
Set $R_1:=R_I(C_0)\cap A$ and let $R_2$ be an $I_A$-residue in $A$ on which $N_A$ acts transitively and that is at minimal distance from $R_1$ amongst such residues. Note that $N_A$ is transitive on $R_1$ as well by Lemma~\ref{lemme equivalence transitive}. 

If $R_1\cap R_2$ is nonempty, then $N_A$ is also transitive on the standard $I\cup I_A$-residue of $A$ and so $\overline{N}_{A}$ contains $W_{I\cup I_A}$. By maximality of $I$ and since $I\cup I_A$ is again essential, this implies $I_A\subseteq I$, as desired.

We henceforth assume that $R_1\cap R_2=\varnothing$. Let $b\in B$ such that $bA_0=A$. Consider a root $\alpha=b\alpha_0$ of $A$, $\alpha_0\in\Phi$, whose wall $\partial\alpha$ separates $R_1$ from $R_2$. 

If both $R_1$ and $R_2$ are at unbounded distance from $\partial\alpha$, then the transitivity of $N_A$ on $R_1$ and $R_2$ together with Lemma~\ref{lemme BR-PEC generique} yield $bU_{\pm\alpha_0}b\inv\subseteq K_r\subseteq O_1$. Since $r_{\alpha_0}\in\langle U_{\alpha_0}\cup U_{-\alpha_0}\rangle$, we thus have $r_{\alpha}:=br_{\alpha_0}b\inv\in O_1$ and so $r_{\alpha}\in \overline{N}_A$. But then $\overline{N}_A=r_{\alpha}\overline{N}_Ar_{\alpha}\inv$ is also transitive on the $I_A$-residue $r_{\alpha}R_2$ which is closer to $R_1$, a contradiction.

If $R_2$ is at bounded distance from $\partial\alpha$ then by Lemma~\ref{lemme 8 murs}, $r_{\alpha}$ centralizes the stabilizer $P$ in $W$ of $R_2$, that is, $P=r_{\alpha}Pr_{\alpha}\inv$. Note that $\overline{N}_{A}$ contains $P$ since it is transitive on $R_2$. Thus $N_A$ is transitive on the $I_A$-residue $r_{\alpha}R_2$, which is closer to $R_1$, again a contradiction.

Thus we are left with the case where $R_1$ is contained in a tubular neighbourhood of every wall $\partial\alpha$ separating $R_1$ from $R_2$. But in that case, Lemma~\ref{lemme 8 murs} again yields that $W_I$ is centralized by every reflection $r_{\alpha}$ associated to such walls. Choose chambers $C_i$ in $R_i$, $i=1,2$, such that $\dist(C_1,C_2)=\dist(R_1,R_2)$, and let $\partial\alpha_1,\dots,\partial\alpha_k$ be the walls separating $C_1$ from $C_2$, crossed in that order by a minimal gallery from $C_1$ to $C_2$. Then each $\alpha_i$, $1\leq i\leq k$, separates $R_1$ from $R_2$ and so $w:=r_{\alpha_k}\dots r_{\alpha_1}$ centralizes $W_I$ and maps $C_1$ to $C_2$. So $W_I=wW_Iw\inv\subseteq \overline{N}_A$ is transitive on $wR_1$ and $R_2$, and hence also on $R_{I\cup I_A}(C_2)\cap A$. Therefore $\overline{N}_{A}$ contains a parabolic subgroup of essential type $I\cup I_A$, so that $I\supseteq I_A$ by maximality of $I$, as desired.
\end{proof}

\begin{lemma}\label{lemme W_I partout}
Let $A\in \AAA_{\geq C_0}$. Then $\overline{N}_{A}$ contains $W_{I}$ as a subgroup of finite index.
\end{lemma}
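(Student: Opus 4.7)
The plan is to separate the two assertions of the lemma. The inclusion $W_I \subseteq \overline{N}_A$ is essentially free: Lemma~\ref{lemme O contient L_I} tells us that $O_1$ contains $L_I^+$, and hence $\mathcal L_I^+$. Condition~(1) of Lemma~\ref{lemme equivalence transitive} (with $T = I$) is therefore satisfied; since that condition is formulated without reference to any particular apartment, the equivalent condition~(4) holds for every $A \in \AAA_{\geq C_0}$, yielding $W_I \leq \overline{N}_A$.

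The real content is the finite-index assertion $[\overline{N}_A : W_I] < \infty$. The key ingredients are Lemma~\ref{lemme existence indice fini}, which supplies a parabolic subgroup $P_{I_A}$ of $\overline{N}_A$ of finite index and essential type $I_A$, together with Lemma~\ref{lemme I max}, which gives $I_A \subseteq I$. The strategy is to first prove that $W_I \leq P_{I_A}$, and then to identify these two parabolics.

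For the first step, set $Q := W_I \cap P_{I_A}$. As an intersection of parabolic subgroups of $W$, $Q$ is again parabolic in $W$; and from $[\overline{N}_A : P_{I_A}] < \infty$ together with $W_I \leq \overline{N}_A$ one gets $[W_I : Q] < \infty$. Using the standard fact that a parabolic subgroup of $W$ contained in another parabolic is itself parabolic in the smaller Coxeter system, we may view $Q$ as a parabolic of $(W_I,I)$ and write $Q = wW_{I'}w^{-1}$ for some $w \in W_I$ and $I' \subseteq I$, with $[W_I : W_{I'}] < \infty$. The essentiality of $I$ then forces $I' = I$: any irreducible component of $I$ not contained in $I'$ would be infinite irreducible and would contribute an infinite factor to the coset space. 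Hence $Q = W_I$, i.e.\ $W_I \leq P_{I_A}$.

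For the second step, $W_I$ is a parabolic subgroup of $P_{I_A}$ by the same principle, so $W_I = pW_{\tilde I'}p^{-1}$ for some $p \in P_{I_A}$ and some subset $\tilde I'$ of the Coxeter generators of $P_{I_A}$. Conjugate parabolics in $W$ have the same rank, so $|I| = |\tilde I'| \leq |I_A|$; combined with $I_A \subseteq I$, this yields $|I| = |I_A|$, whence $\tilde I'$ is the full set of Coxeter generators of $P_{I_A}$ and $W_I = P_{I_A}$. The desired finite index $[\overline{N}_A : W_I] = [\overline{N}_A : P_{I_A}] < \infty$ follows. The main subtlety lies in the two conversions from ``finite-index inclusion'' to ``equality'' of parabolic subgroups: each relies crucially on the essentiality of $I$, once through the infinite-factor argument on coset spaces and once through a rank comparison inside $P_{I_A}$.
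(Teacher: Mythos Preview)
Your proof is correct and rests on the same ingredients as the paper's (Lemmas~\ref{lemme equivalence transitive}, \ref{lemme O contient L_I}, \ref{lemme existence indice fini}, \ref{lemme I max}, and the fact that an essential Coxeter group has no proper parabolic subgroup of finite index), but you assemble them differently in the finite-index step. The paper applies the ``essential $\Rightarrow$ no proper finite-index parabolic'' principle to $W_{I_A}$ rather than to $W_I$: from $W_{I_A} \leq W_I \leq \overline{N}_A$ and $[\overline{N}_A : P_{I_A}] < \infty$ one gets $[W_{I_A} : W_{I_A} \cap P_{I_A}] < \infty$, hence $W_{I_A} \leq P_{I_A} = w W_{I_A} w^{-1}$; the ascending chain $W_{I_A} \leq w W_{I_A} w^{-1} \leq w^2 W_{I_A} w^{-2} \leq \cdots$ of parabolic subgroups then stabilizes (since $|S| < \infty$), forcing $W_{I_A} = P_{I_A}$, and the sandwich $W_{I_A} \leq W_I \leq \overline{N}_A$ finishes. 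Your route instead embeds $W_I$ directly into $P_{I_A}$ and invokes the invariance of the rank of a parabolic subgroup under conjugation. The paper's chain argument is marginally more self-contained (it needs nothing beyond the Noetherianity of parabolic subgroups), whereas your rank comparison relies on a standard but not entirely trivial fact about Coxeter groups; on the other hand, your argument delivers the sharper conclusion $W_I = P_{I_A}$, and hence $I = I_A$, directly.
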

\begin{proof}
We know by Lemmas~\ref{lemme equivalence transitive} and~\ref{lemme O contient L_I} that $\overline{N}_{A}$ contains $W_{I}$. Also, by Lemma~\ref{lemme existence indice fini}, $\overline{N}_{A}$ contains a finite index parabolic subgroup $P_{I_A}=wW_{I_A}w\inv$ of type $I_A$, for some $w\in W$. Since $I_A\subseteq I$ by Lemma~\ref{lemme I max}, we get $W_{I_A}\subseteq\overline{N}_{A}$ and so the parabolic subgroup $P:=W_{I_A}\cap wW_{I_A}w\inv$ has finite index in $W_{I_A}$. As $I_A$ is essential, \cite[Prop.2.43]{ABrown} then yields $P=W_{I_A}$ and so $W_{I_A}\subseteq wW_{I_A}w\inv$. Finally, since the chain $W_{I_A}\subseteq wW_{I_A}w\inv\subseteq w^2W_{I_A}w^{-2}\subseteq\dots$ stabilizes, we find that $W_{I_A}=P_{I_A}$ has finite index in $\overline{N}_{A}$. The result follows.
\end{proof}

We are now ready to make the announced connection between $I$ and $J$.

\begin{lemma}\label{lemme I=J}
$I=J$.
\end{lemma}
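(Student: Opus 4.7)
I would argue by contradiction, assuming $I\subsetneq J$. The aim is to exhibit $L\subsetneq J$ and a finite-index subgroup of $O_1$ contained in the parabolic $P_L$; this will contradict the defining minimality of $J$.

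\textbf{Step 1 (Normalization).} Lemma~\ref{lemme W_I partout} gives $W_I\leq \overline{N}_A$ with finite index. I would first strengthen this to: $W_I$ is \emph{normal} in $\overline{N}_A$. Indeed, for any $g\in\overline{N}_A$, the subgroup $gW_Ig^{-1}$ is a parabolic of essential type $I$ having the same (finite) index in $\overline{N}_A$ as $W_I$, so $W_I\cap gW_Ig^{-1}$ has finite index in $W_I$; by \cite[Prop.~2.43]{ABrown} applied to essential $I$, this intersection must equal $W_I$, forcing $gW_Ig^{-1}=W_I$. Lemma~\ref{lemme Deodhar} then yields $\overline{N}_A\subseteq N_W(W_I)=W_I\times W_{I^\perp}$, and combined with $\overline{N}_A\subseteq W_J$ this gives $\overline{N}_A\subseteq W_{J'}$ where $J':=I\cup(I^\perp\cap J)\subseteq J$. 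In particular, the projection of $\overline{N}_A$ onto $W_{I^\perp\cap J}$ is finite, of order $[\overline{N}_A:W_I]$.

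\textbf{Step 2 (Bruhat--Tits reduction).} By the Bruhat--Tits fixed point theorem, this finite projection stabilizes a spherical residue, of some type $K_A\subseteq I^\perp\cap J$, of the Davis realization of $W_{I^\perp\cap J}$. Lifting, $N_A$ stabilizes a residue of type $I\cup K_A$ in the building. Since $J$ is essential (Lemma~\ref{lemme irr non sph}) and $K_A$ is spherical, $I\cup K_A\subsetneq J$: otherwise $J\setminus I\subseteq K_A\subseteq I^\perp$ would exhibit $J\setminus I$ as a non-empty spherical union of irreducible components of the essential group $W_J$, a contradiction.

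\textbf{Step 3 (From $N_A$ to $O_1$).} The main obstacle is promoting the apartment-wise stabilization of Step 2 into a global statement about $O_1$. I would split into two cases. If $J'\subsetneq J$, the \emph{same} residue $R_{J'}(C_0)$ is stabilized by every $N_A$; combining openness (so $H_0:=O_1\cap B$ has finite index in the compact group $B$) with the strong transitivity of $G$ on the complete apartment system, the plan is to replace an arbitrary $g\in O_1$ by an element of some $N_A$ up to modification by an element of $H_0$, thereby forcing $g\in P_{J'}$ and hence $O_1\subseteq P_{J'}$. If $J'=J$, then $J\setminus I\subseteq I^\perp$, $W_J=W_I\times W_{J\setminus I}$ with $J\setminus I$ essential, and $L_I^+\subseteq O_1$ centralizes the lifts $n_s$ for $s\in J\setminus I$; using this together with the finiteness of the set of spherical subsets of $J\setminus I$, I would show that the orbit of $R_I(C_0)$ under $O_1$ among the $I$-residues of $R_J(C_0)$ is finite, so that $\Stab_{O_1}(R_I(C_0))$ has finite index in $O_1$.

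In either case, a finite-index subgroup of $O_1$ is contained in a parabolic of type $L\subsetneq J$, contradicting the minimality of $J$. The delicate point is Step 3: while the apartment-by-apartment statement of Step 2 is natural, transferring it globally requires careful combinatorial bookkeeping between openness, the Bruhat decomposition $G=BWB$, and the Levi factor $L_I^+$.
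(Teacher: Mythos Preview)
Your Steps 1 and 2 are correct, and the normalization of $W_I$ in $\overline{N}_A$ is a clean observation. However, Step 3 --- which you rightly flag as the crux --- has a genuine gap, and neither of your two cases goes through as sketched.

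In Case 1 you want to write an arbitrary $g\in O_1$ as an element of some $N_A$ times an element of $H_0$. This would require finding an apartment $A\ni C_0, gC_0$ and an element $n\in N_A=\Stab_{O_1}(A)$ with $nC_0=gC_0$. But there is no reason such an $n$ exists: the element of $\Stab_G(A)$ sending $C_0$ to $gC_0$ need not lie in $O_1$, and the finite index $[B:H_0]$ only tells you there are finitely many $K_r$-orbits of apartments through $C_0$; it gives no control over which chambers of a given $A$ lie in $N_A\cdot C_0$. The same obstruction blocks Case 2: knowing that each $\overline{N}_A$ has finite image in $W_{J\setminus I}$ does not by itself bound the $O_1$-orbit of $R_I(C_0)$, and neither the centralizing property of $L_I^+$ nor the finiteness of spherical subsets of $J\setminus I$ addresses this.

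The paper's argument sidesteps the difficulty by never trying to place individual elements of $O_1$ into an $N_A$. Instead it shows directly that the set $\mathcal R$ of $I$-residues meeting $O_1\cdot C_0$ is finite, by bounding $\dist(C_0, gR_I(C_0))$ uniformly over $g\in O_1$. The tool is Lemma~\ref{lemme subsequence}: from any sequence $(g_n)$ in $O_1$ one extracts an apartment $A$ and elements $h_n\in N_A$ whose displacement of $C_0$ is comparable to that of $g_{\psi(n)}$ and which preserve distances to residues through $C_0$; then the finite coset decomposition $\overline{N}_A=\coprod_j v_j W_I$ from Lemma~\ref{lemme W_I partout} forces a subsequence of the $h_n$ into a single coset, hence the corresponding chambers into a single $I$-residue, bounding the distance. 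Once $\mathcal R$ is finite, the kernel of the $O_1$-action on it has finite index in $O_1$ and lies in $P_I$, contradicting the minimality of $J$ unless $I=J$. Note that this works uniformly without any case split on $J'$, and does not use the refinements of your Steps 1--2.
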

\begin{proof}
Let $\mathcal{R}$ denote the set of $I$-residues of $\Delta$ containing a chamber of $O_1\cdot C_0$, and set $R:=R_I(C_0)$. We first show that the distance from $C_0$ to the residues of $\mathcal{R}$ is bounded, and hence that $\mathcal{R}$ is finite. 

Indeed, suppose for a contradiction that there exists a sequence of elements $g_n\in O_1$ such that $\dist(C_0,g_nR)\geq n$ for all $n\in\NN$. Then, up to choosing a subsequence and relabeling, Lemma~\ref{lemme subsequence} yields an apartment $A\in \AAA_{\geq C_0}$ and a sequence $(z_n)_{n\geq n_0}$ of elements of $O_1$ such that $h_n:=z_{n_0}\inv z_n\in N_A$ and $\dist(C_0,z_nR)=\dist(C_0,g_nR)$. Moreover by Lemma~\ref{lemme W_I partout}, we have a finite coset decomposition of the form $\overline{N}_A=\coprod_{j=1}^{t}{v_jW_I}$. Denote by $\pi\co N_A\to \overline{N}_A$ the natural projection. Again up to choosing a subsequence and relabeling, we may assume that $\pi(h_n)=v_{j_0}u_n$ for all $n\geq n_1$ (for some fixed $n_1\in\NN$), where each $u_n\in W_I$ and where $j_0$ is independant of $n$. Then the elements $w_n:=\pi(h_{n_1}\inv h_n)=\pi(z_{n_1}\inv z_n)$ belong to $W_I$. Thus the chambers $z_{n_1}C_0$ and $z_nC_0$ belong to the same $I$-residue since $z_{n_1}$ maps an $I$-gallery between $C_0$ and $w_nC_0$ to an $I$-gallery between $z_{n_1}C_0$ and $z_nC_0$. Therefore $$\dist(C_0,g_nR)=\dist(C_0,z_nR)\leq \dist(C_0,z_{n_1}C_0)$$ and so $\dist(C_0,g_nR)$ is bounded, a contradiction.

So $\mathcal{R}$ is finite and is stabilized by $O_1$. Hence the kernel $O'$ of the induced action of $O_1$ on $\mathcal{R}$ is a finite index subgroup of $O_1$ stabilizing an $I$-residue. Up to conjugating by an element of $O_1$, we thus have $O'<P_I$ and $[O_1:O']<\infty$. Then $O'':=O_1\cap P_I$ is open and contains $O'$, and has therefore finite index in $O_1$. The definition of $J$ finally implies that $I=J$.
\end{proof}

In particular, Lemmas~\ref{lemme O contient L_I} and~\ref{lemme I=J} yield the following.
\begin{cor}\label{cor O contient L_I}
$O_1$ contains $L_J^+$.
\end{cor}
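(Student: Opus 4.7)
The plan is to combine the two immediately preceding lemmas, since Corollary~\ref{cor O contient L_I} is essentially a one-line substitution. Lemma~\ref{lemme O contient L_I} gives $L_I^+ \subseteq O_1$, where $I \subseteq S$ is the essential subset singled out earlier as the type of the maximal non-spherical component of $\Pc(\overline{h}_{A_1})$ (up to a conjugation absorbed into $O_1$). Lemma~\ref{lemme I=J} then identifies $I$ with $J$. Substituting, $L_J^+ = L_I^+ \subseteq O_1$, which is the desired conclusion.

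Viewed this way, no new argument is required: the substance of the proof has been packaged into the two lemmas. If I were checking the argument, I would verify the chain: the construction of $A_1$ guarantees that $\overline{N}_{A_1}$ contains a standard parabolic subgroup $P_I = W_I$ of essential type $I$, so by the equivalence $(4) \Rightarrow (1)$ of Lemma~\ref{lemme equivalence transitive}, $O_1 \supseteq L_I^+$. Separately, Lemma~\ref{lemme I=J} uses the boundedness of the distances $d(C_0, g_n R_I(C_0))$ for $g_n \in O_1$ (itself deduced from the subsequence argument of Lemma~\ref{lemme subsequence} together with the finite-index statement of Lemma~\ref{lemme W_I partout}) to produce a finite-index open subgroup of $O_1$ stabilizing an $I$-residue; the minimality defining $J$ then forces $I = J$.

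The main conceptual step that had to happen before this corollary is Lemma~\ref{lemme I=J}: without it one would only have $L_I^+ \subseteq O_1$ for a potentially strictly smaller essential subset $I \subsetneq J$, and the subsequent proof (establishing that $O_1$ also contains $U_{J\cup J^\perp}$ and fits inside a parabolic of type $J \cup J'$) would break down. Once this identification is in hand, however, Corollary~\ref{cor O contient L_I} is an immediate formal consequence and there is no further obstacle.
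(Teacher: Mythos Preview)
Your proposal is correct and matches the paper's approach exactly: the corollary is obtained by combining Lemma~\ref{lemme O contient L_I} ($O_1 \supseteq L_I^+$) with Lemma~\ref{lemme I=J} ($I=J$). The additional commentary you give about the roles of Lemmas~\ref{lemme equivalence transitive}, \ref{lemme subsequence}, and \ref{lemme W_I partout} is accurate but not needed for the corollary itself.
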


\subsection{Proof of Theorem~\ref{thm complet}: $O_1$ contains the unipotent radical  $U_{J \cup J^\perp}$}

To show that $O_1$ contains the desired unipotent radical, we again make use of the (FPRS) property. 

\begin{lemma}\label{lemme radical unipotent}
$O_1$ contains the unipotent radical $U_{J\cup J^{\perp}}$.
\end{lemma}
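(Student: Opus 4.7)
The plan is to reduce the lemma to the following claim: for every $\alpha\in\Phi$ with $\alpha\supset R_{J\cup J^{\perp}}(C_0)\cap A_0$, we have $U_\alpha\subseteq O_1$. Granting this, the subgroup $V:=\langle U_\alpha\mid\alpha\in\Phi,\ \alpha\supset R_{J\cup J^{\perp}}(C_0)\cap A_0\rangle$ is contained in $O_1$; since the indexing set $\Phi_+\setminus\Phi_{J\cup J^{\perp}}$ is a closed set of positive roots stable under $\mathcal H$-conjugation, a standard Kac--Moody structure argument identifies $V$ with $\mathcal U_{J\cup J^{\perp}}$, and the desired inclusion $U_{J\cup J^{\perp}}=\overline{\mathcal U_{J\cup J^{\perp}}}\subseteq O_1$ follows because $O_1$ is closed.

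To prove the claim, I would first invoke the transitivity of $O_1$ on $R_J(C_0)$, which is precisely what Corollary~\ref{cor O contient L_I} combined with Lemma~\ref{lemme equivalence transitive} provides. Combined with the inclusion $K_r\subseteq O_1$, conjugation then gives $\Fix_G(B(C,r))\subseteq O_1$ for every chamber $C\in R_J(C_0)$.

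Fix now $\alpha$ as above and set $R:=R_J(C_0)\cap A_0$. Since $\partial\alpha$ is disjoint from $R_{J\cup J^{\perp}}(C_0)\cap A_0\supseteq R$ and $C_0\in\alpha$, the residue $R$ lies entirely on the $\alpha$-side of $\partial\alpha$. I would then rule out the possibility that $R$ is contained in any bounded neighborhood of $\partial\alpha$: were this the case, Lemma~\ref{lemme 8 murs} (applicable since $J$ is essential by Lemma~\ref{lemme irr non sph}) would give $[r_{\partial\alpha},W_J]=1$, hence $r_{\partial\alpha}\in\mathscr Z_W(W_J)=W_{J^{\perp}}$ by Lemma~\ref{lemme Deodhar}, forcing $\partial\alpha$ to cross $R_{J^{\perp}}(C_0)\cap A_0\subseteq R_{J\cup J^{\perp}}(C_0)\cap A_0$ and contradicting the hypothesis on $\alpha$. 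Consequently, letting $N=N(W,S,r)$ be the constant of Lemma~\ref{lemme BR-PEC}, one can choose $C\in R$ with $\dist(C,-\alpha)>N$; picking $g\in O_1$ such that $gC_0=C$ and applying Lemma~\ref{lemme BR-PEC generique} with $A=A_0$ and $b=1$ yields $U_\alpha\subseteq gK_rg\inv\subseteq O_1$, as desired.

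The main obstacle is the case distinction excluding that $\partial\alpha$ stays close to $R$; the centralizer description $\mathscr Z_W(W_J)=W_{J^{\perp}}$ from Lemma~\ref{lemme Deodhar} is precisely what forces such absorbed walls to cross the $J^{\perp}$-residue, a configuration ruled out by $\alpha\supset R_{J\cup J^{\perp}}(C_0)\cap A_0$. Once the uniform lower bound $\dist(C,-\alpha)>N$ is available, the (FPRS)-type Lemma~\ref{lemme BR-PEC generique} delivers the containment $U_\alpha\subseteq O_1$ mechanically.
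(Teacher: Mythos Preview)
Your argument is correct and tracks the paper's proof closely: the same unboundedness claim for $R=R_J(C_0)\cap A_0$ relative to $\partial\alpha$ via Lemma~\ref{lemme 8 murs} and Lemma~\ref{lemme Deodhar}, followed by the same use of transitivity of $O_1$ on $R_J(C_0)$ together with Lemma~\ref{lemme BR-PEC generique}. The one genuine difference lies in the reduction step. You prove $U_\alpha\subseteq O_1$ only for the relevant $\alpha$ and then invoke that $\Phi_+\setminus\Phi_{J\cup J^\perp}$ is an ideal of $\Phi_+$ (not merely closed---you need the ideal property to get normality in $U_+$, hence in $\mathcal B_+$) so that $V=\mathcal U_{J\cup J^\perp}$. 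The paper avoids this structure fact entirely: it proves $bU_\alpha b^{-1}\subseteq O_1$ for every $b\in B$ by running your identical argument in the apartment $A=bA_0$ instead of $A_0$, which matches the normal-closure definition of $\mathcal U_{J\cup J^\perp}$ directly. Your route is slightly shorter once the ideal property is granted; the paper's route is self-contained with respect to the definitions given.
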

\begin{proof}
By definition of $U_{J\cup J^{\perp}}$, we just have to check that for every $b\in B$ and every $\alpha\in\Phi$ containing $R_{J\cup J^{\perp}}(C_0)\cap A_0$, we have $bU_{\alpha}b\inv\in O_1$. Fix such $b$ and $\alpha$. In particular, $\alpha$ contains $R:=R_{J}(C_0)\cap A_0$. We claim that $R$ is at unbounded distance from the wall $\partial\alpha$ associated to $\alpha$. Indeed, if it were not, then as $J$ is essential by Lemma~\ref{lemme irr non sph}, the reflection $r_{\alpha}$ would centralize $W_J$ by Lemma~\ref{lemme 8 murs}, and hence would belong to $W_{J^{\perp}}$ by Lemma~\ref{lemme Deodhar}, contradicting $\alpha\supset R_{J^{\perp}}(C)\cap A_0$. 

Set now $A=bA_0$. Then $\alpha'=b\alpha$ is a root of $A$ containing $R':=R_J(C_0)\cap A$. Moreover, $R'$ is at unbounded distance from $-\alpha'$. Since $O_1$ is transitive on $R_J(C_0)$ by Corollary~\ref{cor O contient L_I}, there exists $g\in O_1$ such that $D:=gC_0\in R_J(C_0)\cap A$ and $\dist(D,-\alpha')>N$, where $N$ is provided by Lemma~\ref{lemme BR-PEC generique}. This lemma then implies that $bU_{\alpha}b\inv\subseteq gK_rg\inv\subseteq O_1$, as desired.
\end{proof}

\subsection{Proof of Theorem~\ref{thm complet}: endgame}

We can now prove that $gOg\inv$ is contained in a parabolic subgroup that has $P_J$ as a finite index subgroup.

\begin{lemma}\label{lemme conclusion 3}
Every subgroup $H$ of $G$ containing $O_1$ as a subgroup of finite index is contained in some standard parabolic $P_{J \cup J'}$ of type $J\cup J'$ , with $J'$ spherical and $J'\subseteq J^{\perp}$.
\end{lemma}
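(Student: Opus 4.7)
The plan is as follows. Since $[H:O_1]<\infty$ and $O_1\subseteq P_J$ stabilises $R_J(C_0)$, the $H$-orbit $\mathcal R:=H\cdot R_J(C_0)$ is a finite set of $J$-residues. Because any intersection of residues of $\Delta$ is again a residue, there is a smallest residue $R^*$ of $\Delta$ containing every element of $\mathcal R$; it is $H$-stable and contains $C_0$, hence $R^*=R_T(C_0)$ for some $T\supseteq J$, and $H\subseteq P_T$. It remains to prove that $T\subseteq J\cup J^\perp$ and that $T\setminus J$ is spherical.

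First I would prove $T\subseteq J\cup J^\perp$. Fix $h\in H$. Since $hO_1h\inv\subseteq H$ and $O_1$ and $hO_1h\inv$ have the same finite index in $H$, the intersection $O_1\cap hO_1h\inv$ has finite index in both. Corollary~\ref{cor O contient L_I} and Lemma~\ref{lemme radical unipotent} give $L_J^+\cdot U_{J\cup J^\perp}\subseteq O_1$, so by Lemma~\ref{lemme conclusion 2}, $O_1$ has finite index in $P_J$. Consequently $P_J\cap hP_Jh\inv$ has finite index in $P_J$, placing $h$ in the commensurator of $P_J$ in $G$. I would then show this forces $hR_J(C_0)\subseteq R_{J\cup J^\perp}(C_0)$ by a gate projection argument: the combinatorial projection of $hR_J(C_0)$ onto $R_J(C_0)$ is a sub-residue of some type $J'\subseteq J$, stabilised by $P_J\cap hP_Jh\inv$. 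Since $J$ is essential (Lemma~\ref{lemme irr non sph}), $[W_J:W_{J'}]=\infty$ as soon as $J'\subsetneq J$, so the stabiliser in $P_J$ of a proper sub-residue has infinite index in $P_J$; this forces $J'=J$, meaning $hR_J(C_0)$ and $R_J(C_0)$ are parallel $J$-residues with bijective gate projection. The Weyl distance between corresponding chambers then lies in $\mathscr N_W(W_J)=W_{J\cup J^\perp}$ by Lemma~\ref{lemme Deodhar}, whence $hR_J(C_0)\subseteq R_{J\cup J^\perp}(C_0)$. Applying this to every $h\in H$ gives $R^*\subseteq R_{J\cup J^\perp}(C_0)$ and thus $T\subseteq J\cup J^\perp$.

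Write $T=J\cup K$ with $K\subseteq J^\perp$. By Lemma~\ref{lemme Deodhar}, $W_T=W_J\times W_K$, so $R_T(C_0)$ decomposes as a direct product of buildings $R_J(C_0)\times R_K(C_0)$, and this product structure is preserved by any automorphism of $R_T(C_0)$, in particular by the action of $H\subseteq P_T$. The finite orbit $\mathcal R$ of $J$-residues in $R_T(C_0)$ corresponds, via projection onto the second factor, to a finite $H$-orbit in $R_K(C_0)$. The Davis realisation of $R_K(C_0)$ is a CAT(0) space, so by the Bruhat--Tits fixed point theorem $H$ fixes a point there, equivalently stabilises a spherical residue of $R_K(C_0)$ of some type $J'\subseteq K$. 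Then $J'$ is spherical and $J'\subseteq J^\perp$, and $H\subseteq P_{J\cup J'}$, as required.

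The main obstacle is the passage from the finite-index condition on $P_J\cap hP_Jh\inv$ to the geometric conclusion $hR_J(C_0)\subseteq R_{J\cup J^\perp}(C_0)$. The key inputs are (a) the essentiality of $J$, which rules out non-trivial projections onto proper sub-residues of $R_J(C_0)$ via the infinite-index phenomenon for proper parabolics of $W_J$, and (b) Lemma~\ref{lemme Deodhar} identifying $\mathscr Z_W(W_J)=W_{J^\perp}$, which pins down the ``shape'' of parallel $J$-residues.
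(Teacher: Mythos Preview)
Your proof is correct, and the sphericality step via the Bruhat--Tits fixed point theorem is essentially the paper's own. The genuine difference lies in how you establish $hR_J(C_0)\subseteq R_{J\cup J^\perp}(C_0)$. The paper argues geometrically: from the transitivity of $O_1$ on $R_J(C_0)$ and the finiteness of $\mathcal R$ it first shows that $R_J(C_0)$ lies in a uniform tubular neighbourhood of each $R'\in\mathcal R$, then takes a minimal gallery from $C_0$ to its projection on $R'$, checks that this gallery crosses no wall of either residue, and concludes via Lemma~\ref{lemme 8 murs} and Lemma~\ref{lemme Deodhar} that every wall it crosses has its reflection in $W_{J^\perp}$. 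You instead invoke Lemma~\ref{lemme conclusion 2} (together with Corollary~\ref{cor O contient L_I} and Lemma~\ref{lemme radical unipotent}) to get $[P_J:O_1]<\infty$ up front, deduce that every $h\in H$ commensurates $P_J$, and then use the standard building-theoretic fact that the gate map between parallel $J$-residues is governed by the minimal double-coset representative, which must normalise $W_J$. Your route is more conceptual and avoids the wall-by-wall analysis; the paper's route is more self-contained within its geometric toolkit and does not need Lemma~\ref{lemme conclusion 2} at this point. Two small points worth tightening: the phrase ``any intersection of residues of $\Delta$ is again a residue'' is only literally true for residues sharing a common chamber, which is exactly the situation you use (all residues containing $\bigcup\mathcal R$ contain $C_0$); and ``preserved by any automorphism of $R_T(C_0)$'' should read ``any type-preserving automorphism'', which is what elements of $P_T\subset G$ induce.
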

\begin{proof}
Recall that $O_1$ stabilizes the $J$-residue $R:=R_J(C_0)$ and acts transitively on its chambers by Corollary~\ref{cor O contient L_I}. Let $\mathcal{R}$ be the (finite) set of $J$-residues of $\Delta$ containing a chamber in the orbit $H\cdot C_0$. 

We first claim that for any $R'\in\mathcal{R}$ there is a constant $M$ such that $R$ is contained in an $M$-neighbourhood of $R'$ (and since $\mathcal{R}$ is finite we may then as well assume that this constant $M$ is independant of $R'$). Indeed, because $\mathcal{R}$ is finite, there is a finite index subgroup $H'$ of $H$ which stabilizes $R'$. In particular $\dist(D,R')=\dist(H'\cdot D,R')$ for any chamber $D$ of $R$. Moreover, the chambers of $R$ are contained in finitely $H'$-orbits since $H$ acts transitively on $R$. The claim follows. 

Let now $J'\subseteq S \setminus J$ be minimal such that $\textbf{R}:=R_{J\cup J'}(C_0)$ contains the reunion of the residues of $\mathcal{R}$. In other words, $H<P_{J\cup J'}$ with $J'$ minimal for this property.

We next show that $J'\subseteq J^{\perp}$. For this, it is sufficient to see that $H$ stabilizes $R_{J\cup J^{\perp}}(C_0)$. 

Note that, given $R'\in\mathcal{R}$, if $A$ is an apartment containing some chamber $C'_0$ of $R'$, then every chamber $D$ in $R\cap A$ is at distance at most $M$ from $R'\cap A$. Indeed, if $\rho=\rho_{A,C'_0}$ is the retraction of $\Delta$ onto $A$ centered at $C'_0$, then for every $D'\in R'$ such that $\dist(D,D')\leq M$, the chamber $\rho(D')$ belongs to $R'\cap A$ and is at distance at most $M$ from $D=\rho(D)$ since $\rho$ is distance decreasing (see \cite[Lemma 11.2]{MR1709955}). 

Let now $g\in H$ and set $R':=gR\in\mathcal{R}$. Let $\Gamma$ be a minimal gallery from $C_0$ to its combinatorial projection onto $R'$, which we denote by $C'_0$. Let $A$ be an apartment containing $\Gamma$. Finally, let $w\in W=\Stab_G(A)/\Fix_G(A)$ such that $wC_0=C'_0$. We want to show that $\Gamma$ is a $J^{\perp}$-gallery, that is, $w\in W_{J^{\perp}}$. 

To this end, we first observe that, since $\Gamma$ joins $C_0$ to its projection onto $R'$, it does not cross any wall of $R' \cap A$.  We claim that $\Gamma$ does not cross any wall of $R \cap A$ either. Indeed, assume on the contrary that  $\Gamma$ crosses some wall $m$ of $R \cap A$. Then by Lemma~\ref{lemme racine essentielle} we would find a wall $m'\neq m$ intersecting $R\cap A$ and parallel to $m$, and therefore also chambers of $R\cap A$ at unbounded distance from $R'\cap A$,  a contradiction. 
 
Thus every wall crossed by $\Gamma$ separates $R \cap A$ from $R' \cap A$. In particular, $R\cap A$ is contained in an $M$-neighbourhood of any such wall $m$ since it is contained in an $M$-neighbourhood of $R'\cap A$ and since every minimal gallery between a chamber in $R\cap A$ and a chamber in $R'\cap A$ crosses $m$. Then, by Lemmas~\ref{lemme Deodhar} and~\ref{lemme 8 murs}, the reflection associated to $m$ belongs to $ W_{J^{\perp}}$. Therefore $w$ is a product of reflections that belong to $W_{J^{\perp}}$, as desired.

Finally, we show that $J'$ is spherical. As $\textbf{R}$ splits into a product of buildings $\textbf{R}=R_J\times R_{J'}$, where $R_J:=R_J(C_0)$ and $R_{J'}:=R_{J'}(C_0)$, we get a homomorphism $H\to \Aut(R_J)\times\Aut(R_{J'})$. As $O_1$ stabilizes $R_J$ and has finite index in $H$, the image of $H$ in $\Aut(R_{J'})$ has finite orbits in $R_{J'}$. In particular, by the Bruhat--Tits fixed point theorem, $H$ fixes a point in the Davis realization of $R_{J'}$, and thus stabilizes a spherical residue of $R_{J'}$. But this residue must be the whole of $R_{J'}$ by minimality of $J'$. This concludes the proof of the lemma.
\end{proof}

\begin{proof}[Proof of Theorem~\ref{thm complet}] 
The first statement summarizes Corollary~\ref{cor O contient L_I} and Lemmas~\ref{lemme radical unipotent} and \ref{lemme conclusion 3}, since some conjugate $gOg\inv$ of $O$ contains $O_1$ as a finite index subgroup. The second statement then follows from Lemma~\ref{lemme conclusion 2} applied to the open subgroup $O_1$ of $P_J$.
Finally, the two last statements are a consequence of Lemma~\ref{lemme conclusion 3}. Indeed, any subgroup $H$ containing $gOg\inv$ with finite index also contains $O_1$ with finite index. Then $H$ is a subgroup of some standard parabolic $P_{J\cup J'}$ for some spherical subset $J' \subset J^\perp$. Moreover, since the index of $O_1$ in $P_{J \cup J'}$ is finite, and since there are only finitely many spherical subsets of $J^\perp$, it follows that there are only finitely many possibilities for $H$.
\end{proof}

\begin{rem}
Let $O$ be a subgroup of $G$, and let $J\subseteq S$ be as in the statement of Theorem~\ref{thm complet}. Assume that $J^\perp$ is spherical. Then $L_J^+ \cdot U_{J\cup J^\perp}$ has finite index in $P_{J\cup J^\perp}$ and is thus open since it is closed. Thus, in that case, $O$ is open if and only if $L_J^+ \cdot U_{J\cup J^\perp}<gOg\inv< P_{J\cup J^\perp} $ for some $g\in G$.
\end{rem}

\begin{cor}
Let $O$ be an open subgroup of $G$ and let $J\subseteq S$ be minimal such that $O$ virtually stabilizes a $J$-residue. If $J^{\perp}=\varnothing$, then there exists some $g\in G$ such that $L_J^+ \cdot U_J<gOg\inv<P_J=\mathcal H\cdot L_J^+ \cdot U_J $.
\end{cor}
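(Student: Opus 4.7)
The plan is to derive this corollary directly from Theorem~\ref{thm complet} together with the structural description of $P_J$ provided by Lemma~\ref{lemme these BR}. Note first that the hypothesis on $J$ in the corollary coincides exactly with the hypothesis on $J$ in Theorem~\ref{thm complet}: the phrase ``virtually stabilizes a $J$-residue'' is precisely the condition that some finite index subgroup of $O$ stabilizes a $J$-residue, and the minimality requirement is the same in both statements. So Theorem~\ref{thm complet} applies verbatim.

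Applying it to $O$ yields a spherical subset $J' \subseteq J^\perp$ and an element $g \in G$ such that
$$L_J^+ \cdot U_{J \cup J^\perp} < gOg\inv < P_{J \cup J'}.$$
Under the hypothesis $J^\perp = \varnothing$, the spherical subset $J'$ must itself be empty, and hence $J \cup J^\perp = J = J \cup J'$. Substituting these equalities into the previous display produces the desired chain
$$L_J^+ \cdot U_J < gOg\inv < P_J.$$

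It remains to identify $P_J$ with $\mathcal H \cdot L_J^+ \cdot U_J$. By Lemma~\ref{lemme these BR}(i) we have $P_J = L_J \cdot U_J$, and by Lemma~\ref{lemme these BR}(ii) we have $L_J = \mathcal H \cdot L_J^+$. Combining the two equalities gives $P_J = \mathcal H \cdot L_J^+ \cdot U_J$, which completes the proof. Since the argument is a direct specialization of Theorem~\ref{thm complet} under the simplifying hypothesis $J^\perp = \varnothing$, there is no serious obstacle; the only point to track carefully is the bookkeeping of the sets $J$, $J'$, and $J^\perp$, ensuring that the conclusions of Theorem~\ref{thm complet} collapse correctly when $J^\perp$ is empty.
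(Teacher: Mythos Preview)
Your argument is correct and is exactly the intended one: the paper's own proof merely states ``This readily follows from Theorem~\ref{thm complet}'', and your proposal spells out precisely that deduction, including the identification $P_J=\mathcal H\cdot L_J^+\cdot U_J$ via Lemma~\ref{lemme these BR}.
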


\begin{proof}
This readily follows from Theorem~\ref{thm complet}.
\end{proof}

To prove Corollary~\ref{cor intro}, we use the following general fact, which is well known in the discrete case. 

\begin{lemma}\label{lem:Noeth}
Let $G$ be a locally compact group. 
Then $G$ is Noetherian if and only if every open subgroup is compactly generated. 
\end{lemma}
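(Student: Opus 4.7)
The plan is to treat the two directions of the equivalence separately.

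For the easier direction, suppose every open subgroup of $G$ is compactly generated, and consider an ascending chain $K_1 \subseteq K_2 \subseteq \cdots$ of open subgroups. The union $K = \bigcup_n K_n$ is open in $G$ and so, by hypothesis, $K = \langle C \rangle$ for some compact subset $C$. The $K_n$ form an increasing open cover of $C$, so compactness produces an index $n_0$ with $C \subseteq K_{n_0}$; then $K = \langle C\rangle \leq K_{n_0} \leq K$, and the chain stabilises from $n_0$ onwards, proving the ACC.

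For the forward direction, suppose $G$ is Noetherian and let $H$ be an open subgroup of $G$. Using local compactness of $G$, pick a compact symmetric identity neighbourhood $V \subseteq H$, and consider the family
$$\mathcal{F} := \{\,\langle V \cup F\rangle \mid F \subseteq H \text{ finite}\,\}.$$
Every member of $\mathcal{F}$ is compactly generated, contained in $H$, and open in $G$ (since it contains $V$, which has non-empty interior). The family is directed under inclusion, and its union is all of $H$, since every $h\in H$ lies in $\langle V\cup\{h\}\rangle \in \mathcal{F}$. Apply the Noetherian hypothesis to $\mathcal{F}$: any strictly ascending chain in $\mathcal{F}$ would be a strictly ascending chain of open subgroups of $G$, so by dependent choice the ACC produces a maximal element $K_0 = \langle V \cup F_0\rangle \in \mathcal{F}$. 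If there were some $h \in H \setminus K_0$, then $\langle V \cup F_0 \cup\{h\}\rangle$ would be an element of $\mathcal{F}$ strictly containing $K_0$, a contradiction. Hence $K_0 = H$ is compactly generated.

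The principal subtlety is in the forward direction: one must identify a directed family of compactly generated open subgroups of $H$ on which the ACC bites, and check that the resulting maximal element is $H$ itself. Local compactness is the crucial ingredient here, as it furnishes the compact neighbourhood $V$ needed both to ensure that every member of $\mathcal{F}$ is open in $G$ and to guarantee that $\mathcal{F}$ is rich enough to exhaust $H$.
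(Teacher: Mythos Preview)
Your proof is correct and follows essentially the same approach as the paper. The backward direction is identical, and for the forward direction the paper builds the ascending chain $\langle V\rangle \subsetneq \langle V \cup \{g_1\}\rangle \subsetneq \cdots$ directly, whereas you phrase the same construction as the existence of a maximal element in the family $\mathcal{F}$ via ACC and dependent choice; these are two presentations of the same argument.
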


\begin{proof}
Assume that $G$ is Noetherian and let $O < G$ be open. Let $U_1 < O$ be the subgroup generated by some compact identity neighbourhood $V$ in $O$. If $U_1 \neq O$, there is some $g_1 \in O \setminus U_1$ and we let $U_2 = \langle U_1 \cup \{g_1 \} \rangle$. Proceeding inductively we obtain an ascending chain of open subgroups $U_1 < U_2 < \dots < O$, and the ascending chain condition ensures that $O = U_n$ for some $n$. In other words $O$ is generated by the compact set $V \cup \{g_1, \dots, g_n\}$. 

Assume conversely that every open subgroup is compactly generated, and let $U_1 < U_2 < \dots$ be an ascending chain of open subgroups. Then $U = \bigcup_n U_n$ is an open subgroup. Let $C$ be a compact generating set for $U$. By compactness, the inclusion $C \subset \bigcup_n U_n$ implies that $C$ is contained in $U_n$ for some $n$ since every $U_j$ is open. Thus $U  = \langle C \rangle < U_n$, whence $U = U_n$ and $G$ is Noetherian.
\end{proof}

\begin{proof}[Proof of Corollary~\ref{cor intro}]
By Theorem~\ref{main thm intro}, every open subgroup of a complete Kac--Moody group $G$ over a finite field is contained as a finite index subgroup in some parabolic subgroup. Notice that parabolic subgroups are compactly generated by the Svarc--Milnor Lemma since they act properly and cocompactly on the residue of which they are the stabilizer. Since a cocompact subgroup of a group acting cocompactly on a space also acts cocompactly on that space, it follows for the same reason that all open subgroups of $G$ are compactly generated; hence $G$ is Noetherian by Lemma~\ref{lem:Noeth}.
\end{proof}

\begin{proof}[Proof of  Corollary~\ref{corintro2}]
Immediate from Theorem~\ref{main thm intro} since Coxeter groups of affine and compact hyperbolic type are precisely those Coxeter groups all of whose proper parabolic subgroups are finite. 
\end{proof}

\bibliographystyle{amsalpha} 
\bibliography{OpenKM} 
\end{document}